\documentclass[10pt]{amsart}
%

\usepackage{amsmath}
\usepackage{amssymb}
\usepackage{amsthm}
\usepackage{amsfonts, dsfont}
\usepackage{paralist}
\usepackage{graphics} 
\usepackage{epsfig} 
\usepackage{graphicx}  
\usepackage{epstopdf}
\usepackage{epstopdf}
\usepackage{verbatim}
\epstopdfsetup{update}
\usepackage{mathrsfs}
\usepackage{mathtools}
\usepackage{pstricks}

\usepackage{relsize}
\usepackage{tikz}
\usetikzlibrary{matrix}
\usepackage{subcaption}
\usepackage{pgfplots}
\usepackage{fixltx2e}
\usepackage{enumitem}
\usepackage{ upgreek }
\usepackage[colorlinks=true]{hyperref}
\hypersetup{urlcolor=blue, linkcolor=blue, citecolor=red}
\usepackage{hyperref}
\usepackage{cleveref}
\usepackage{bm}
\usepackage{appendix}
\parskip 8pt
%

\newtheorem{theorem}{Theorem}[section]
\newtheorem{lemma}[theorem]{Lemma}
\newtheorem{corollary}[theorem]{Corollary}
\newtheorem{proposition}[theorem]{Proposition}
\newtheorem{remark}[theorem]{Remark}
\newtheorem{definition}[theorem]{Definition}




\numberwithin{equation}{section}

\DeclareMathOperator*{\argmin}{argmin}



\begin{document}

\title[Viscous Ergodic Problem. Part 1: HJB Equation]{A viscous ergodic problem with unbounded and measurable ingredients. Part 1: HJB Equation}
\thanks{The author is funded by the Deutsche Forschungsgemeinschaft (DFG, German Research Foundation) – Projektnummer 320021702/GRK2326 – Energy, Entropy, and Dissipative Dynamics (EDDy). An earlier, yet incomplete, version of this manuscript was part the author's Ph.D. thesis \cite{kouhkouhPhD} which was conducted when he was a Ph.D. student at the University of Padova.}

\author{Hicham Kouhkouh}
\address{Hicham Kouhkouh \newline \indent
{RWTH Aachen University, Institut f\"ur Mathematik,  \newline \indent 
RTG Energy, Entropy, and Dissipative Dynamics,\newline \indent
Templergraben 55 (111810)},
 \newline \indent 
{52062, Aachen, Germany}
}
\email{\texttt{kouhkouh@eddy.rwth-aachen.de}}

\date{\today}
%
\begin{abstract}
We address the problem of existence and uniqueness of solutions $(c,u(\cdot))$ to ergodic Hamilton-Jacobi-Bellman (HJB) equations of the form $H(x,\nabla u(x), D^{2}u(x)) = c$
in the whole space $\mathds{R}^{m}$ with unbounded and merely measurable data and where $H$ is a Bellman Hamiltonian. 
The method we use is different from classical approaches. It relies on duality theory and optimization in abstract Banach spaces together with maximal dissipativity of the diffusion operator.
\end{abstract}

\subjclass[MSC]{35F21, 49L12, 49K27, 35J60}
\keywords{Duality, ergodic Hamilton-Jacobi-Bellman equation,  invariant measures, optimization, weak solutions.}
\maketitle


\section{Introduction}

This paper is devoted to the problem of existence  of solutions to some ergodic fully nonlinear partial differential equations in the whole space domain $\mathds{R}^{m}$ with unbounded and measurable data satisfying a subexponential growth. 
Such a problem takes the form of
\begin{equation} \label{eq: intro - pde}
    \textit{Find $(c,u(\cdot))\in\mathds{R}\times \mathcal{X}(\mathds{R}^{m})$ s.t.: } \; H(x,\nabla u(x),D^{2}u(x)) = c,\;\text{ in }\mathds{R}^{m}
\end{equation}
where $\mathcal{X}$ is a functional space (part of the unknowns), $H$ is a Bellman Hamiltonian 
\begin{equation*}
        H(x,\nabla u(x),D^{2}u(x)) \coloneqq \min\limits_{\alpha\in A}\{\,-\mathcal{L}_{\alpha}u(x) + f(x,\alpha)\,\}
    \end{equation*}
and $\mathcal{L}_{\alpha}$ is a diffusion operator
\begin{equation*}
    \mathcal{L}_{\alpha}\varphi(x) :=  \text{trace}(a(x,\alpha)D^{2}\varphi(x)) + b(x,\alpha)\cdot\nabla\varphi(x)
\end{equation*}
with $\alpha\in A$ a compact subset of $\mathds{R}^{k}$ for some $k>0$. The case where $H$ is given with a $\max$ (instead of a $\min$) can be obtained analogously (see \cite{kouhkouhPhD} for further details).

This problem arises in ergodic stochastic control, weak KAM theory, homogenization, singular perturbations and asymptotic approximations in partial differential equations (long-time behavior, vanishing discount coefficient). It has also been shown recently in \cite{bardi2023eikonal} that it is linked to global optimization.

Throughout this paper, we will make the following assumptions and refer to them wherever it is needed:

\begin{description}
    \item[A1]{
\begin{enumerate}[label = (\roman*)]
        \item $a=(a^{ij}_{\alpha})$ is a Lipschitz continuous mapping (uniformly in $\alpha$) on $\mathds{R}^m$ such that $a(x,\alpha)=\varrho(x,\alpha)\varrho(x,\alpha)^{\top}$ where $\varrho$ is a continuous in $x$ (unif. in $\alpha$) $m\times m_{1}$ matrix function (for some $m_{1}\geq m$), 
        \item $b=(b^{i}_{\alpha}):\mathds{R}^m\times A\to \mathds{R}^m$ is a locally bounded Borel-meas. vector field.
    \end{enumerate}}
    \item[A2]{For $p>m$, $a^{ij}(\cdot,\alpha)\in W^{p,1}_{\text{loc}}(\mathds{R}^{m})$ and $b^{i}(\cdot,\alpha)\in L^{p}_{\text{loc}}(\mathds{R}^{m})$, uniformly in $\alpha\in A$.   }
    \item[A3]{There exist $\overline{\Lambda}\,\geq\,\underline{\Lambda}\;>0$ such that $\forall\;x,\xi\in\mathds{R}^m$, $\;\underline{\Lambda} \|\xi\|^{2}\,\leq\, \xi \cdot a(x,\alpha) \xi \,\leq\, \overline{\Lambda}\|\xi\|^{2}$, \\ 
    uniformly in $\alpha\in A$, i.e. $(a^{ij})$ is positive, unif. bounded and nondegenerate.}
    \item[A4]{The drift $b$ satisfies, for some positive numbers $\chi, \gamma_{1},\gamma_{2}$,
    \begin{equation*}
        \sup\limits_{\alpha\in A} \, b(x,\alpha)\cdot x  \leq \gamma_{1} - \gamma_{2}|x|^{\chi},\quad \forall\, x\in\mathds{R}^{m}.
    \end{equation*}} 
    \item[A5]{$x\mapsto f(x,\alpha)$ is  Borel-measurable on $\mathds{R}^{m}$ with at most a polynomial growth,  i.e. 
    $\exists\;K_{f}>0,\; \text{s.t. } |f(x,\alpha)|\leq K_{f}(1+|x|)^{d}$, $\forall\;x\in\mathds{R}^{m},\alpha\in A$ and for some $d\geq 1$.}
    \item[A6]{$\exists \, K_{b}>0$ and $\theta\in [0,d]$ such that $|b(x,\alpha)|\leq K_{b}(1+|x|)^{\theta}$ for all $x\in\mathds{R}^{m},\alpha\in A$.}
\end{description}


We will also need some assumptions on how the data of the problem depend on the parameter $\alpha$. These shall be set later in \S \ref{sec: prelim}. 


The main difficulty and novelty in this setting is that we are looking for solutions in the whole space $\mathds{R}^{m}$ while both $b$ and $f$ are unbounded. Usually, we refer to $c$ as the \textit{ergodic constant} (or, critical constant) and $u(\cdot)$ as the \textit{corrector} (or, critical solution). The differential operator $\mathcal{L}_{\alpha}$ can be interpreted as the infinitesimal generator of the controlled stochastic process
\begin{equation}\label{eq: SDE intro}
    dX_{t} = b(X_{t},\alpha_{t})dt + \sqrt{2}\varrho(X_{t},\alpha_{t})dB_{t}
\end{equation}
where $B_{t}$ is a Wiener process while $f$ is the running cost of the control problem. 
Note that \eqref{eq: SDE intro} should be understood in its weak sense (see e.g. \cite{krylov1969ito, krylov37selection, lee2022analytic}).


The \textbf{main result} (see  
Theorem \ref{thm: main NL}) can be informally stated as follows:
\textit{
Under assumptions including (A1)-(A6), the following statements hold true:
\begin{enumerate}[label = (\roman*)]
    \item (Existence)\; There exists a  constant $c\in\mathds{R}$ such that the PDE in \eqref{eq: intro - pde} admits an almost everywhere solution $u(\cdot)\in W^{r,2}_{\text{loc}}(\mathds{R}^{m})$ with $r\in [1,+\infty)$ and satisfying $|u(x)|\leq K(1+|x|^{\kappa})$ where $K>0$ and $\kappa = d+1-\theta$.
    \item (Uniqueness)\;If, for the specific constant $c$ shown in (i), we assume moreover that $b$ is locally Lipschitz continuous with at most a linear growth (i.e. $\beta=1$ in (A6)), then $u(\cdot)$ is unique in $W^{r,2}_{\text{loc}}(\mathds{R}^{m})$ with $r>\frac{m}{2}$, up to an additive constant. That is, if $(c,u(\cdot))$ and $(c,v(\cdot))$ are two solutions in the sense of (i), then $u(\cdot)-v(\cdot)$ is a constant.
\end{enumerate}
}


In particular, the constant $c$ is shown to be the \textit{critical} (largest) one with an explicit formula allowing us to derive a continuity estimate with respect to the data of the problem. 


\textbf{Related results. } The ergodic problem captures the asymptotic behavior of a system (e.g. the long-time behavior of a control problem, or the effective phenomena in homogenization) and hence plays the role of a model reduction technique that is of interest in many applications. In the context of stochastic control, such a problem arises for the first time in the pioneering work \cite{lasry1975controle}.
Then probably the first result linking homogenization to ergodic theory goes back to \cite{bensoussan1979homogenization}, and the ergodic problem as we have stated appears in the context of homogenization in \cite{lions1986homogenization}. Since then many results on the problem and related topics have been established.\\
\textbullet\quad \textbf{In the linear case. } This corresponds to the \textit{ergodic Poisson equation}, that is to find a pair $(c,\varphi(\cdot))$ where $c$ is a constant, that solves $c+\mathcal{L}\varphi=g$ in the whole space. If one already knows what a possible ergodic constant $c$ can be, then this boils down to the usual Poisson equation $\mathcal{L}\varphi = \widetilde{g}$ where $\widetilde{g}\coloneqq g-c$. In this case, the methods used are mainly of stochastic analysis (Feynman-Kac representation), Dirichlet forms and semigroups \cite{lorenzi2006analytical}. With assumptions similar to ours, the problem $\mathcal{L}\varphi = \widetilde{g}$ is solved in \cite{pardoux2001poisson} (see also \cite{rockner2019strong,rockner2021diffusion}) under the additional assumption $\int \widetilde{g}d\mu = 0$ where $\mu$ is the invariant measure associated to $\mathcal{L}$. In fact, with our result, we get $c=\int g d\mu$ and hence $c+\mathcal{L}\varphi = g$ becomes $\mathcal{L}\varphi = g - c = \widetilde{g}$ and our problem falls in the setting of \cite{pardoux2001poisson}. In our previous work \cite{bardi2023singular} (see also \cite{kouhkouhPhD}), we constructed the ergodic constant $c$ and showed that it corresponds indeed to the mean of $g$ w.r.t. $\mu$  using probabilistic techniques and without the need of proving the existence of the solution $\varphi(\cdot)$.  We also mention \cite{mannucci2016ergodic} where linear subelliptic operators are considered in the whole  $\mathds{R}^{m}$ with possibly unbounded coefficients. The methods used in the latter are inspired by \cite{lions2015equations}.\\
\textbullet\quad \textbf{In the nonlinear case. } 
            Most of the theory has been developed for the multidimensional torus where one enjoys compactness. In such a setting the problem is treated in the seminal work \cite{arisawa1998ergodicsto}. There have been since then a wide literature, mainly in the context of long-time behavior of HJB equation and of homogenization, both for the first-order and second-order equations, but also in the context of weak KAM theory: we do not review it here, since it does not address the problem studied in this paper. The  recent work in \cite{bianca2018new} uses the link between PDEs and dynamical systems. And probably the first results treating the second-order ergodic Bellman equation on the whole space $\mathds{R}^{m}$ are \cite{bensoussan1985bellman}, then \cite{bensoussan1992bellman}.  In \cite{barles2016unbounded} (see also \cite{barles2020large}), the ergodic problem considered is of the form
\begin{equation}
\label{eq: hjb gamma}
	- \Delta u + \frac{1}{\gamma}|Du|^{\gamma} = f(x) + c,\quad \text{ in } \mathds{R}^{m}.
\end{equation}
Classical solutions are shown to exist using PDE methods, assuming some growth and regularity conditions. These results are similar to those previously shown in \cite{ichihara2011recurrence,ichihara2012large,ichihara2013criticality,ichihara2015generalized}   using methods of stochastic control theory and probability tools. Similar arguments are used in \cite{kaise2006structure} for quadratic Hamiltonian arising in risk-sensitive stochastic control problems. A study of the underlying (controlled) stochastic process can also be useful to derive helpful ergodic properties which then yield some compactness. This is done for example in the recent paper \cite{chasseigne2019ergodic} where an inward drift is assumed (similar to our assumption (A4)). Another approach that uses the stochastic ergodic control formulation together with PDE methods is the one in \cite{cirant2014solvability} where the problem considered is of the form \eqref{eq: hjb gamma} with an additional term of the form $-b(x)\cdot Du$, and the term $\frac{1}{\gamma}|Du|^{\gamma}$ is replaced by $H(Du)$ with $H$ satisfying some regularity and growth assumptions.  In the latter, the problem is approximated by a sequence of truncated problems (bounded with Neumann condition) as in \cite{lions2002ergodicity}. The usual PDE method for dealing with the viscous ergodic HJB equation as being a limiting problem of either the long-time behavior of parabolic equations or to vanishing-discount coefficient in elliptic equations is described in detail in \cite{alvarez2010ergodicity}. On the other hand, \cite{arapostathis2019uniqueness} is devoted to uniqueness of classical solutions to HJB equation of the form \eqref{eq: hjb gamma} in the case where $\gamma \in (1,2)$, and it relies on an infinite dimensional linear program for elliptic equations for measures which is an approach that is reminiscent of ours.

Our method relies on duality tools together with the extension of the diffusion operator $\mathcal{L}$. The idea is to isolate the two terms $c$ and $f$ and consider them as (part of) objective functions in suitable optimization problems dual to each other. Then we interpret a solution $(c,u(\cdot))$ of \eqref{eq: intro - pde} as dual variables of an optimization problem over the space of measures $\mu$ and whose admissible set is made of measures solving $\mathcal{L}^{*}\mu = 0$. And provided we can solve the latter equation, which is in fact a stationary \textit{Fokker-Planck-Kolmogorov} (FPK) equation, we can describe the admissible set of the optimization problem and hence recover existence of its corresponding dual variables (analogous to Lagrange multipliers). In fact, this method allows us to transpose to problems of the form \eqref{eq: intro - pde} the information one can get from the study of the operator $\mathcal{L}$ and its adjoint $\mathcal{L}^{*}$ through a duality scheme for suitably chosen optimization problems.

This optimization view point is not totally new since it is briefly mentioned in \cite[\S 6.6]{arapostathis2012ergodic} and is  also reminiscent of \cite{evans2002linear}. However, to our knowledge, this analysis has never been used to address a PDE problem such as the solvability of an ergodic HJB equation in our setting. Another interesting direction is the one considered in \cite{arapostathis2019uniqueness} where the problem of uniqueness of solutions to viscous HJB is addressed via similar duality methods, unlike in this paper where we use duality to prove existence only and rely rather on Liouville type results \cite{bardi2016liouville} to prove uniqueness. We would like also to mention that our method allows to deal with the ergodic HJB equation under weak regularity assumptions, in particular the dependency on the space variable is assumed to be measurable only, with a subexponential growth. Moreover our assumptions concern the coefficients of the diffusion operator (or the underlying stochastic differential equation) which is a way of presentation that is more suitable for applications in stochastic control and is different from the classical references that rather rely on structural assumptions on the Hamiltonian. Finally, the method can be extended to deal with ergodic Mean-Field Games in the same setting, which is the object of the companion manuscript \cite{kouhkouh2}.

This paper is organized as follows. In \textbf{Section \ref{sec: survey}} we provide the main results from duality theory and also from diffusion operators, in particular we define the closed extension of an operator and which is the definition we shall consider for $\mathcal{L}_{\alpha}$ in the equation \eqref{eq: intro - pde}. \textbf{Section \ref{sec: prelim}} contains some preliminary results needed in the sequel. In \textbf{Section \ref{sec: main NL}} we study the primal and dual problems, then state and prove the main result, that is the solvability of the ergodic HJB equation.

\section{Survey of known results}\label{sec: survey}

\subsection{Convex duality}
\label{sec:duality theory}

The results and remarks mentioned in this section are wellknown and can be found in \cite{bonnans2013perturbation}. For the sake of a broad readability of this paper and its self-containedness, we include the results we will use, borrowed from \cite{bonnans2013perturbation}.

Let $(X,X^{*})$ and $(Y,Y^{*})$ be paired spaces, i.e. such that each space of a pair is a locally convex topological vector space and is the topological dual of the other. We assume moreover that $X$ and $Y$ are Banach spaces that we endow with their respective strong topologies, while $X^{*}$ and $Y^{*}$ are endowed with the respective weak-$*$ topologies.

Let $Q$ be a closed convex subset of $X$ and $K$ a closed convex cone subset of $Y$. We are interested in first order optimality conditions for the optimization problem
\begin{equation}
\label{Primal - 1}
\tag{$P$}
    val(P) =\; \min\limits_{x\in Q} f(x),\quad \text{s.t.:}\;\;\; G(x)\in K
\end{equation}
where $f:X\to \mathds{R}$ and $G:X\to Y$. The objective function in \eqref{Primal - 1} can be reformulated as $f(x) + I_{Q}(x)$ while we minimize over the whole set $X$. We denote by $I_{Q}(\cdot)$ the indicator function ($I_{Q}(x) = 0$ if $x\in Q$, and $+\infty$ if $x\notin Q$). The Lagrangian of \eqref{Primal - 1} is 
\begin{equation}
\label{lagrangian}
    L(x,y^{*}) := f(x) + \langle y^{*}, G(x) \rangle,\quad (x,y^{*}) \in X\times Y^{*}.
\end{equation}

We embed the problem \eqref{Primal - 1} into the family of optimization problems
\begin{equation}
\label{Primal - y}
\tag{$P_{y}$}
    \min\limits_{x\in Q} f(x),\quad \text{s.t.:}\;\;\; G(x)+y\in K
\end{equation}
where $y\in Y$ is viewed as a parameter vector. Clearly for $y=0$, the corresponding problem $(P_{0})$ coincides with the problem \eqref{Primal - 1}. Let $v(y)$ be the corresponding value function
\begin{equation*}
    v(y) = val(P_{y}) = \inf\limits_{x\in Q}\; f(x) + I_{K}(G(x)+y). 
\end{equation*}

The (conjugate) dual of \eqref{Primal - 1} can be written in the form (see \cite[\S 2.5.3, p. 107]{bonnans2013perturbation}):
\begin{equation}
    \label{Dual}
    \tag{$D$}
    val(D) =\; \max\limits_{y^{*}\in Y^{*}} \big\{\inf\limits_{x\in Q} \;L(x,y^{*})\; - I^{*}_{K}(y^{*})\;  \big\}
\end{equation}
and $I^{*}_{K}(\cdot)$ is the Legendre-Fenchel conjugate of the indicator function supported on $K$, which is known as the \textit{support function} of the set $K$. 

Recall that $val(P)\geq val(D)$ (this can be easily obtained for example as a consequence of conjugate duality; see \cite[eq. (2.268), p.  96]{bonnans2013perturbation}, or by Lagrange duality; see \cite[Proposition 2.156, p.  104]{bonnans2013perturbation}) and that if for some $x_{o}\in Q$, $y_{o}^{*}\in Y^{*}$ the equality of primal and dual objective functions holds, i.e.
\begin{equation}
    \label{equality objective functions}
    f(x_{o}) + I_{K}(G(x_{o})) = \inf\limits_{x\in Q}\;L(x,y^{*}_{o}) - I^{*}_{K}(y_{o}^{*}),
\end{equation}
then $val(P)=val(D)$. If moreover the common value is finite, then $x_{o}\in Q$ and $y_{o}^{*}\in Y^{*}$ are optimal solutions of \eqref{Primal - 1} and \eqref{Dual} respectively. The equality \eqref{equality objective functions} can be written in the following equivalent form
\begin{equation}
    \label{equality objective functions - FY}
    \big(L(x_{o},y_{o}^{*})-\inf\limits_{x\in Q}L(x,y_{o}^{*})\big) + \big(I_{K}(G(x_{o})) + I^{*}_{K}(y_{o}^{*})-\langle y_{o}^{*},G(x_{o})\rangle\big) = 0.
\end{equation}
Clearly, the first term in the left hand side is non-negative and the second term is also non-negative by the Young-Fenchel inequality. Moreover the equality
\begin{equation*}
    I_{K}(G(x_{o})) + I^{*}_{K}(y_{o}^{*})-\langle y_{o}^{*},G(x_{o})\rangle = 0
\end{equation*}
holds if and only if $y_{o}^{*}\in \partial I_{K}(G(x_{o}))$; the subdifferential of the indicator function evaluated in $G(x_{o})$. 
Thus, the equality in \eqref{equality objective functions} is equivalent to
\begin{equation}
\label{optimality conditions - 1}
    x_{o}\in \argmin\limits_{x\in Q}L(x,y_{o}^{*})\quad \text{and}\quad y_{o}^{*}\in\partial I_{K}(G(x_{o})).
\end{equation}
Noe that $\partial I_{K}(G(x_{o}))=N_{K}(G(x_{o}))$ where $N_{K}(\cdot)$ is the normal cone\footnote{If $S\subset X$ convex, then $N_{S}(x):=\{x^{*}\in X^{*}\,: \langle x^{*},z-x\rangle \leq 0\,\forall\;z\in S\}$. If $x\notin S$ then $N_{S}(x)=\emptyset$.} to $K$. Moreover, since $K$ is a convex cone, the condition $y_{o}^{*}\in N_{K}(G(x_{o}))$ is equivalent to
\begin{equation}
    \label{equiv cond normal cone}
    G(x_{o})\in K,\quad y_{o}^{*}\in K^{-}\quad \text{and}\quad \langle y_{o}^{*},G(x_{o}) \rangle = 0
\end{equation}
where $K^{-}$ is the polar (negative dual) cone\footnote{Let $C$ be a subset of $X$, then $C^{-}:=\{x^{*}\in X^{*}\;:\; \langle x^{*},x \rangle \leq 0,\quad\forall\;x\in C \}$.} of $K$. The optimality conditions can therefore be written as
\begin{equation}
\label{optimality conditions - 2}
	x_{o}\in \argmin\limits_{x\in Q}L(x,y_{o}^{*}),\quad G(x_{o})\in K,\quad y_{o}^{*}\in K^{-}\quad \text{and}\quad \langle y_{o}^{*},G(x_{o})=0.
\end{equation}

We are interested in existence of dual variables and in \textit{no duality gap} between \eqref{Primal - 1} and \eqref{Dual}, i.e. $val\eqref{Primal - 1}=val\eqref{Dual}$. We consider the convex case which we now define before stating the existence theorem.

\begin{definition}(\cite[Definition 2.163, p. 110]{bonnans2013perturbation})\label{def: convex}
We say that the problem \eqref{Primal - 1} is convex if the function $f(x)$ is convex, the set $Q$ is convex, the set $K$ is convex and closed, and the mapping $G(x)$ is convex with respect to the set\footnote{The mapping $G$ is convex w.r.t. the set $C$ if the multifunction $G(x)+C$ is convex (see \cite[Definition 2.103, p.72]{bonnans2013perturbation}), that is, for any $x_{1},x_{2}\in X$ and $t\in [0,1]$,
\begin{equation*}
    tG(x_{1}) + (1-t)G(x_{2}) - G(tx_{1} + (1-t)x_{2}) + C \subset C.
\end{equation*}
} $C:=-K$.
\end{definition}

\begin{theorem}(\cite[Theorem 2.165, p.112]{bonnans2013perturbation})\label{thm: duality}
Consider the optimization problem \eqref{Primal - 1}. Suppose $f(x)$ is convex and lower semi-continuous, $Q$ is convex and close, $G(x)$ is continuous and \eqref{Primal - 1} is convex and satisfies
\begin{equation}
    \label{eq: interior}
    0\in \text{\upshape int}\{G(Q)-K\}.
\end{equation}
Then there is no duality gap between \eqref{Primal - 1} and \eqref{Dual}. Moreover, if $val\eqref{Primal - 1}$  is finite, then the optimal solution set of the dual problem \eqref{Dual} is a nonempty, convex, bounded and weak-$*$ compact subset of $Y^{*}$.
\end{theorem}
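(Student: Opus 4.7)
The plan is to analyze the value function of the parametrized family $(P_y)$ and apply conjugate duality. Define $v: Y \to \overline{\mathds{R}}$ by
\[
v(y) \,=\, \inf_{x\in Q}\bigl\{f(x) + I_K(G(x)+y)\bigr\},
\]
so that $v(0) = val(P)$, and recall from the perturbation framework underlying \eqref{Dual} that $val(D) = v^{**}(0)$ and that the set of dual optimal solutions equals $\partial v^{**}(0)$. It therefore suffices to show that $v$ is convex and continuous at $0$: continuity gives $v(0) = v^{**}(0)$ (hence no duality gap), and it makes $\partial v(0)$ a nonempty, convex, weak-$*$ compact subset of $Y^*$, bounded by the local Lipschitz constant of $v$ at $0$.

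Convexity of $v$ comes from the joint convexity of $\phi(x,y) := f(x) + I_Q(x) + I_K(G(x)+y)$: convexity of $f$ and $Q$ is assumed, while convexity of $G$ with respect to $-K$ (Definition \ref{def: convex}) combined with $K+K \subset K$ yields that the set $\{(x,y) : G(x)+y \in K\}$ is convex, so $I_K(G(x)+y)$ is jointly convex; marginalizing over $x$ preserves convexity. Next, I would translate the Slater-type hypothesis \eqref{eq: interior} into finiteness of $v$ near $0$: it provides an open neighborhood $V$ of $0$ such that for every $y \in V$ one has $-y = G(x_y) - k_y$ for some $x_y \in Q$ and $k_y \in K$, making $(P_y)$ feasible and giving $v(y) \le f(x_y) < +\infty$.

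The decisive step is upgrading finiteness of $v$ on $V$ to continuity at $0$. In a Banach space a proper convex function that is bounded above on a neighborhood of a point is continuous there, so the task reduces to exhibiting a uniform bound $v(y) \le M$ on some ball around $0$. This I would obtain via a Robinson-type selection: openness of $G(Q)-K$ together with continuity of $G$ allows one to choose $x_y$ in a bounded subset of $Q$, on which $f$ is bounded above (by local Lipschitz continuity of the convex function $f$ on the interior of its domain), producing the needed uniform bound. Once $v$ is continuous at $0$, the conclusions listed in the first paragraph follow, and identifying $\partial v(0)$ with the set of optimal dual variables (up to the sign convention used in \eqref{Dual}) closes the proof. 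The main obstacle is precisely this finiteness-to-continuity upgrade: in infinite dimensions a convex function can be finite on an open set without being locally bounded, so the interior condition \eqref{eq: interior} must be exploited to select feasible points $x_y$ varying inside a bounded set as $y$ ranges over a small ball.
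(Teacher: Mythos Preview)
The paper does not prove this theorem: it is quoted verbatim from \cite[Theorem 2.165, p.~112]{bonnans2013perturbation} in the survey Section~\ref{sec:duality theory}, with no argument supplied. There is therefore no in-paper proof to compare against.

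On its own merits your sketch is correct and is precisely the conjugate-duality route taken in \cite{bonnans2013perturbation}. The identifications $val(D)=v^{**}(0)$ and ``dual optimal set $=\partial v^{**}(0)$'' are standard in the perturbation framework, and the convexity of $v$ via joint convexity of $\phi(x,y)=f(x)+I_Q(x)+I_K(G(x)+y)$ is exactly right (your use of $(-K)$-convexity of $G$ together with the cone property $K+K\subset K$ is the correct mechanism). You also correctly isolate the only delicate step: in infinite dimensions, $v$ being finite on a neighbourhood of $0$ does not by itself make it bounded above, hence continuous, there. Your proposed resolution---a Robinson--Ursescu/open-mapping argument applied to the closed convex multifunction $x\mapsto G(x)-K$ on $Q$, yielding for each small $y$ a feasible $x_y\in Q$ with $\|x_y-x_0\|\le C\|y\|$ for a fixed feasible $x_0$, so that the $x_y$ remain in a ball on which the continuous (because convex, lsc, finite-valued) function $f$ is bounded---is exactly what closes the gap. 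Once $v$ is continuous at $0$, the remaining assertions (no duality gap; $\partial v(0)$ nonempty, convex, norm-bounded by the local Lipschitz constant, and weak-$*$ compact by Banach--Alaoglu) are standard.
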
 

In \cite{bonnans2013perturbation}, the latter definition and theorem are given for an optimization problem of the form \eqref{Primal - 1} but with $Q=X$. In our case, it is enough to consider as an objective function $f(x)+I_{Q}(x)$, instead of $f(x)$. Then, in order to have $f(x)+I_{Q}(x)$ convex (resp. lower semi-continuous), we need $f(x)$ to be a convex (resp. l.s.c.) function and $Q$ to be a convex (resp. closed) set; see \cite[Example 2.115, p.80]{bonnans2013perturbation}). Moreover, the condition \eqref{eq: interior} is written as $0\in \text{int}\{G(\text{dom}(f)) - K\}$ where $\text{dom}(f)=\{x\in X \,:\, f(x) <+\infty\}$ is the domain of $f$. In our setting, with the objective function $f(x)+I_{Q}(x)$, we have $\text{dom}(f+I_{Q}) = \text{dom}(f)\cap Q = Q$ assuming $\text{dom}(f)=X$ and $f(x)>-\infty$ for all $x\in X$.

The next proposition characterizes \eqref{eq: interior} in a particular case: \\
Let $Y$ be the Cartesian product of two Banach spaces $Y_{1}$ and $Y_{2}$, and $K=K_{1}\times K_{2}\subset Y_{1}\times Y_{2}$ where $K{1}$ and $K_{2}$ are closed convex subsets of $Y_{1}$ and $Y_{2}$ respectively. Let $G(x)=(G_{1}(x), G_{2}(x))$ with $G_{i}(x)\in Y_{i},i=1,2$. 
\begin{proposition}\label{prop: interior}
If $Y_{2}=X$, $G_{2}(x)=x$ for all $x\in X$ and $G_{1}(x)$ is $(-K)$-convex and continuously differentiable, then the following condition is equivalent to \eqref{eq: interior}
\begin{equation}
    \label{eq: interior 2}
    0\in \text{\upshape int}\{G_{1}(x_{\circ}) + DG_{1}(x_{\circ})[K_{2} - x_{\circ}]  - K_{1}\}
\end{equation}
at every feasible point $x_{\circ}\in \{x\in X\,:\, x\in Q \text{ and } G(x) \in K\}$.
\end{proposition}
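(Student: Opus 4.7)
The plan is to exploit the product structure $G = (G_1, \mathrm{id}_X)$ and $K = K_1 \times K_2$ so that Robinson's constraint qualification \eqref{eq: interior} decouples, with the $Y_2$-component trivialized by $G_2 = \mathrm{id}_X$. I begin by rewriting
\[
G(Q) - K = \bigl\{(G_1(x) - k_1,\; x - k_2)\,:\, x \in Q,\; k_1 \in K_1,\; k_2 \in K_2\bigr\},
\]
so that $(y_1, y_2)$ belongs to this set iff some $x \in Q$ simultaneously satisfies $x - y_2 \in K_2$ and $G_1(x) \in y_1 + K_1$. Hence \eqref{eq: interior} is equivalent to solvability of this coupled inclusion for every $(y_1, y_2)$ in a product neighborhood of $0 \in Y_1 \times X$.

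For the direction \eqref{eq: interior} $\Rightarrow$ \eqref{eq: interior 2} at a feasible $x_\circ \in Q \cap K_2$ with $G_1(x_\circ) \in K_1$, I would restrict the nonlinear condition to perturbations of the form $(y_1, 0)$, obtaining $x \in Q \cap K_2$ with $G_1(x) \in y_1 + K_1$. The $(-K)$-convexity of $G_1$ yields the subgradient-type inclusion
\[
G_1(x_\circ) + DG_1(x_\circ)(x - x_\circ) - G_1(x) \in K_1,
\]
hence $G_1(x_\circ) + DG_1(x_\circ)(x - x_\circ) \in G_1(x) + K_1 \subset y_1 + K_1 + K_1 = y_1 + K_1$ (since $K_1$ is a convex cone). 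The linearized set in \eqref{eq: interior 2} therefore contains a $Y_1$-neighborhood of the origin.

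For the converse \eqref{eq: interior 2} $\Rightarrow$ \eqref{eq: interior}, the substitution $x = \tilde{x} + y_2$ with $\tilde{x} \in K_2$ automatically yields $x - \tilde{x} = y_2$ (so $k_2 := \tilde{x} \in K_2$ handles the $Y_2$-component), reducing the problem to finding $\tilde{x} \in K_2$ with $G_1(\tilde{x} + y_2) \in y_1 + K_1$. Using continuous differentiability, $G_1(\tilde{x} + y_2) = G_1(\tilde{x}) + DG_1(x_\circ)(y_2) + r$, with $r$ of higher order in $\|y_2\|$ and $\|\tilde{x} - x_\circ\|$; the openness granted by \eqref{eq: interior 2} furnishes $\tilde{x}$ for the linearized inclusion at the shifted target $y_1 - DG_1(x_\circ)(y_2)$, and a Graves-type open mapping argument at $x_\circ$ (again leveraging the $(-K)$-convexity of $G_1$) transfers this solvability to the nonlinear image, absorbing $r$ on a possibly smaller product neighborhood.

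The main obstacle is the tight coupling of the two components of the inclusion through the single variable $x$: the $x$ that resolves the $G_1$-inclusion must simultaneously lie in a translate of $K_2$. The identity structure $G_2 = \mathrm{id}_X$ is precisely what permits the free shift of $x$ by $y_2$, while the explicit appearance of $K_2 - x_\circ$ in \eqref{eq: interior 2} records the admissible directions of variation. Controlling the higher-order remainder $r$ in the converse direction while preserving the constraint $\tilde{x} \in K_2$ is the technically delicate step, and is where the Graves-type open mapping argument—or an equivalent iterative construction—enters decisively.
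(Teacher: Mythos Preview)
The paper's own proof is a two-line citation: it invokes \cite[Proposition~2.104]{bonnans2013perturbation} to pass between \eqref{eq: interior} and Robinson's constraint qualification
\[
0\in\text{int}\{G(x_\circ)+DG(x_\circ)(Q-x_\circ)-K\}
\]
at a feasible point (this is the Robinson--Ursescu theorem in the convex setting), and then \cite[eq.~(2.192)]{bonnans2013perturbation} to collapse the product-structured Robinson CQ to \eqref{eq: interior 2}. Your direct route is different in spirit and, for the implication \eqref{eq: interior}$\Rightarrow$\eqref{eq: interior 2}, it works cleanly: the subgradient inclusion $G_1(x_\circ)+DG_1(x_\circ)(x-x_\circ)-G_1(x)\in K_1$ does follow from $(-K_1)$-convexity by the standard limiting argument, and the rest is set algebra.

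The converse has a genuine gap. Your substitution $x=\tilde{x}+y_2$ with $\tilde{x}\in K_2$ handles the $Y_2$-component but never addresses the requirement $x\in Q$, which is part of \eqref{eq: interior}; in the paper's intended application $Q=\mathcal{M}_d^{+}(\mathds{R}^{m})$ has empty interior, so this cannot be waved away. Separately, the expansion $G_1(\tilde{x}+y_2)=G_1(\tilde{x})+DG_1(x_\circ)(y_2)+r$ mixes two base points and only controls $r$ if $\tilde{x}$ is already known to be close to $x_\circ$, which \eqref{eq: interior 2} alone does not guarantee---the $\tilde{x}\in K_2$ supplied by the linearised condition may be far from $x_\circ$. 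The ``Graves-type'' argument you correctly name as the crux is exactly the content of \cite[Proposition~2.104]{bonnans2013perturbation}, but it must be applied to the full convex multifunction $q\mapsto G(q)-K$ restricted to $Q$ (yielding Robinson's CQ with $Q-x_\circ$ as the admissible directions, which one then reduces componentwise), not split componentwise from the outset; linearising after splitting loses control of the coupling through $Q$.
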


\begin{proof}
Using \cite[Proposition 2.104, p.73]{bonnans2013perturbation}), we have \eqref{eq: interior} is equivalent to Robinson's constraint qualification
\begin{equation*}
    0 \in \text{\upshape int}\{G(x_{\circ}) + DG(x_{\circ})(Q-x_{\circ})  -K\}
\end{equation*}
which in turn is equivalent to \eqref{eq: interior 2}; see \cite[equation (2.192), p.71]{bonnans2013perturbation}).
\end{proof}

The following theorem concludes this subsection by characterizing the optimal solutions as desired. 

\begin{theorem}(\cite[Theorem 2.158, p.109]{bonnans2013perturbation})\label{thm: opt cond}
If val\eqref{Primal - 1}=val\eqref{Dual}, and $x_{\circ}\in X$ and $\Bar{y}^{*}\in Y^{*}$ are optimal solutions of \eqref{Primal - 1} and \eqref{Dual}, respectively, then the optimality conditions \eqref{optimality conditions - 1} hold. \\
In our setting, and as explained above, \eqref{optimality conditions - 1} are equivalently expressed by \eqref{optimality conditions - 2}.
\end{theorem}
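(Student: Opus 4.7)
The plan is to extract the optimality conditions by unpacking the no-duality-gap hypothesis and by reusing the algebraic identity displayed immediately before the statement. Concretely, optimality of $x_{\circ}$ in \eqref{Primal - 1} forces $x_{\circ}\in Q$, $G(x_{\circ})\in K$ and $f(x_{\circ})=\mathrm{val}\eqref{Primal - 1}$, while optimality of $\bar{y}^{*}$ in \eqref{Dual} gives $\inf_{x\in Q} L(x,\bar{y}^{*})-I_{K}^{*}(\bar{y}^{*})=\mathrm{val}\eqref{Dual}$. Equating the two values, and using that $I_{K}(G(x_{\circ}))=0$ since $G(x_{\circ})\in K$, yields exactly the equality \eqref{equality objective functions}.

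Next I would rewrite this identity in the equivalent form \eqref{equality objective functions - FY}, namely
\[
\bigl(L(x_{\circ},\bar{y}^{*})-\inf_{x\in Q}L(x,\bar{y}^{*})\bigr) + \bigl(I_{K}(G(x_{\circ}))+I_{K}^{*}(\bar{y}^{*})-\langle \bar{y}^{*},G(x_{\circ})\rangle\bigr) = 0.
\]
Both summands are non-negative: the first by the very definition of the infimum (evaluated at $x_{\circ}\in Q$), and the second by the Young--Fenchel inequality between $I_{K}$ and its Legendre--Fenchel conjugate $I_{K}^{*}$. Hence each summand must vanish separately. The vanishing of the first summand is precisely $x_{\circ}\in\argmin_{x\in Q}L(x,\bar{y}^{*})$, and the vanishing of the second is the equality case of Young--Fenchel, which is equivalent to $\bar{y}^{*}\in\partial I_{K}(G(x_{\circ}))=N_{K}(G(x_{\circ}))$. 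Together these are exactly the conditions \eqref{optimality conditions - 1}.

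Finally, to pass to the form \eqref{optimality conditions - 2}, I would invoke the cone structure: since $K$ is a closed convex cone, the membership $\bar{y}^{*}\in N_{K}(G(x_{\circ}))$ combined with $G(x_{\circ})\in K$ is equivalent to $\bar{y}^{*}\in K^{-}$ together with the complementarity $\langle \bar{y}^{*},G(x_{\circ})\rangle=0$, which is the standard characterization \eqref{equiv cond normal cone} already recorded in the text. There is no genuine obstacle in any step: the whole argument is a rearrangement, the Young--Fenchel inequality, and the normal-cone characterization for convex cones. The only point that requires mild care is keeping straight the sign convention defining $K^{-}$ as the negative polar so that the complementarity is written with the correct sign; once that is fixed, the equivalence between \eqref{optimality conditions - 1} and \eqref{optimality conditions - 2} is immediate.
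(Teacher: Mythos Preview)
Your proposal is correct and follows precisely the line of argument the paper itself lays out in the paragraphs preceding the theorem: you derive \eqref{equality objective functions} from the no-duality-gap hypothesis and optimality, pass to the split form \eqref{equality objective functions - FY}, use non-negativity of each summand to force both to vanish, and then invoke the normal-cone characterization \eqref{equiv cond normal cone} for convex cones to rewrite \eqref{optimality conditions - 1} as \eqref{optimality conditions - 2}. This is exactly the intended proof; the paper does not supply a separate argument beyond that discussion and the citation to Bonnans--Shapiro.
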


\subsection{Extension of diffusion operators} \label{sec:extension diff}
We resume in this subsection some known results from \cite{bogachev2002uniqueness} (see also \cite{bogachev2015fokker, bogachev1999uniqueness, stannat1999nonsymmetric}). We shall be interested in a matrix-valued function $a=(a^{ij}_{\alpha})$ and a vector field $b=(b^{i}_{\alpha})$ such that $a^{ij}_{\alpha}(x) = a^{ij}(x,\alpha)$ and $b^{i}_{\alpha}(x) = b^{i}(x,\alpha)$ where $\alpha$ is some parameter in the compact set $A$. For the sake of simplicity of notations, we omit the dependence of $a$ and $b$ on the parameter $\alpha$, the latter being assumed fixed in the present subsection (its effect will be discussed next, in subsection \S \ref{sec:distance inv}). Hence we simply write $a=(a^{ij})$ a continuous mapping on $\mathds{R}^m$ and  $b=(b^i):\mathds{R}^m\to \mathds{R}^m$ a Borel-measurable vector field. Let us also set
\begin{equation}
\label{gen_diffusion op}
    L_{a,b}\varphi = a^{ij}\partial_{i}\partial_{j}\varphi + b^{i}\partial_{i}\varphi,\quad \varphi \in C^{\infty}_{0}(\mathds{R}^m),
\end{equation}
where we use the standard summation rule for repeated indices. Suppose $\mu$ is a locally finite (not necessarily non-negative) Borel measure on $\mathds{R}^m$, i.e. a measure on the Borel $\sigma$-algebra $\mathcal{B}(\mathds{R}^m)$ of $\mathds{R}^m$, solving the Fokker-Planck-Kolmogorov (FPK) equation
\begin{equation}
    \label{equation mu_diff op}
    L^{*}_{a,b}\mu = 0
\end{equation}
in the following sense:
\begin{equation}
\label{sense equation mu_diff op}
    a^{ij},b^{i}\in L^{1}_{\text{loc}}(\mathds{R}^{m};\mu) \quad \text{ and } \quad \int_{\mathds{R}^m}L_{a,b}\varphi\; d\mu = 0,\quad \forall\;\varphi\in C^{\infty}_{0}(\mathds{R}^m)
\end{equation}
Measures $\mu$ satisfying \eqref{equation mu_diff op} are called \textit{infinitesimally} invariant, or simply invariant if there is no confusion. Let us define
\begin{equation}
\label{measures in kernel}
    \mathcal{M}_{\text{ell}}^{a,b}:=\big\{\mu\;|\; \mu \text{ a probability measure on }\; \mathds{R}^{m}\;\text{satisfying }\; \eqref{equation mu_diff op}\big\}
\end{equation}
where the subscript ``ell" stands for \textit{elliptic}. In \cite{bogachev1999uniqueness}, it is shown that the question whether or not $\mathcal{M}_{\text{ell}}^{a,b}$ contains at most one element turns out to be related to the question whether $\mu\in\mathcal{M}_{\text{ell}}^{a,b}$ is invariant for the $C_{0}$-semigroup generated by the closure of the operator $(L_{a,b},C^{\infty}_{0}(\mathds{R}^m))$. 
In particular, under assumptions that we will shortly make precise, if $\mathcal{M}_{\text{ell}}^{a,b} = \{\mu\}$ a singleton, then $\mu$ allows to define a new operator $(\overline{L}^{\mu}_{a,b}, D(\overline{L}^{\mu}_{a,b}))$ which is the closed extension of $(L_{a,b},C^{\infty}_{0}(\mathds{R}^m))$ on $L^{1}(\mathds{R}^{m};\mu)$.
The latter operator will play a key role in our main result. 

We recall some notations: when a measure $\mu$ has a density $\rho$ with respect to (w.r.t.) Lebesgue measure that we denote by $dx$, then $\mu$ is absolutely continuous w.r.t. $dx$, we write $\mu \ll dx$ and $\rho=\frac{d\mu}{dx}$ is the Radon-Nikodym derivative of $\mu$ w.r.t. $dx$. Let $W^{p,k}(\mathds{R}^{m})$, $p\geq 1, k\geq 0$ be the standard Sobolev space of functions whose generalized derivatives up to order $k$ are in $L^{p}(\mathds{R}^{m})$. If we consider a measure $\mu$ instead of Lebesgue, we write $W^{p,k}(\mathds{R}^{m};\mu)$ to denote the weighted Sobolev space. And let $W^{p,k}_{loc}(\mathds{R}^{m})$ be the class of functions such that $\chi f\in W^{p,k}(\mathds{R}^{m})$ for each $\chi \in C^{\infty}_{0}(\mathds{R}^{m})$ the class of infinitely differentiable functions with compact support in $\mathds{R}^{m}$.

\begin{theorem}(Regularity I --\cite[Theorem 2.1]{bogachev2002uniqueness})
\label{thm regularity meas}
Let $\mu$ be a locally finite and non-negative Borel measure satisfying \eqref{equation mu_diff op}. Assume (A1), (A2) and (A3). Then $\mu \ll dx$ with $\frac{d\mu}{dx}\in W^{p,1}_{\text{loc}}(\mathds{R}^{m})\big(\subset C^{1-\frac{m}{p}}(\mathds{R}^{m})\big)$. If $\rho$ denotes the continuous version of $\frac{d\mu}{dx}$, then for all compact $K\subset \mathds{R}^{m}$, $\exists\;c_{K}\in]0,\infty[$ s.t.: $\sup\limits_{K}\rho \leq c_{K}\inf\limits_{K}\rho$. In particular, either $\rho\equiv 0$ or $\rho(x)>0,\;\forall\;x\in\mathds{R}^{m}$.
\end{theorem}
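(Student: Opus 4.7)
The overall strategy is to recast the adjoint FPK equation $L^{*}_{a,b}\mu=0$ as a divergence-form elliptic PDE for the (yet-to-be-established) density $\rho$, and then apply classical $L^{p}$ elliptic regularity theory together with the Moser-Harnack inequality. Under (A1), the coefficients $a^{ij}$ are Lipschitz, so their weak derivatives $\partial_{j}a^{ij}$ exist as locally bounded Borel functions, and one may introduce the modified drift $\widetilde{b}^{i} \coloneqq b^{i}-\partial_{j}a^{ij}$, which is still locally bounded and Borel-measurable. Testing the identity in \eqref{sense equation mu_diff op} against $\varphi\in C^{\infty}_{0}(\mathds{R}^{m})$ and using the Lipschitz regularity of $a^{ij}$ (even just in a distributional sense at this stage), the equation can be rewritten formally as
\begin{equation*}
    \partial_{i}\!\bigl(a^{ij}\partial_{j}\rho\bigr) - \partial_{i}(\widetilde{b}^{i}\rho) = 0,
\end{equation*}
which is a divergence-form uniformly elliptic equation (by (A3)) with Lipschitz principal part and locally bounded drift.

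The plan proceeds in four steps. First, I would establish that $\mu$ is absolutely continuous w.r.t.\ Lebesgue measure. The standard route is a mollification argument: set $\mu_{\varepsilon} \coloneqq \mu * \eta_{\varepsilon}$ with a standard mollifier, derive the perturbed equation satisfied by $\rho_{\varepsilon}\coloneqq d\mu_{\varepsilon}/dx$, and exploit uniform ellipticity (A3) to obtain a Caccioppoli-type estimate that controls $\|\rho_{\varepsilon}\|_{L^{q}_{\text{loc}}}$ uniformly in $\varepsilon$ for some $q>1$; passing to the limit yields $\mu\ll dx$ with $\rho\in L^{q}_{\text{loc}}$. Second, once $\rho\in L^{q}_{\text{loc}}$, the divergence-form equation above can be viewed as $\partial_{i}(a^{ij}\partial_{j}\rho) = g$ with $g \in W^{-1,p}_{\text{loc}}$ (using (A2) on $b$ and the Lipschitz regularity of $a^{ij}$), and the $L^{p}$ theory for linear elliptic equations in divergence form with Lipschitz leading coefficients (Ladyzhenskaya--Ural'tseva or Gilbarg--Trudinger, Ch.~8--9) upgrades the regularity to $\rho\in W^{p,1}_{\text{loc}}(\mathds{R}^{m})$; since $p>m$ by (A2), Sobolev embedding yields the continuous version $\rho\in C^{1-m/p}(\mathds{R}^{m})$. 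Third, because $\rho\ge 0$ is a weak solution of a uniformly elliptic divergence-form equation with locally bounded lower-order terms, the Moser-Harnack inequality applies on each compact $K$, yielding the quantitative bound $\sup_{K}\rho\le c_{K}\inf_{K}\rho$. Fourth, the dichotomy $\rho\equiv 0$ or $\rho>0$ everywhere is an immediate corollary of this Harnack inequality (equivalently, of the strong maximum principle).

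I expect the main obstacle to lie in the \emph{first} step: going from a locally finite Borel measure (with possibly a singular part) to absolute continuity. The FPK equation only tests $\mu$ against $L_{a,b}\varphi$ for $\varphi\in C^{\infty}_{0}$, and smooth functions are ``blind'' to Cantor-like singular parts unless one uses ellipticity in an essential way. The mollification argument must be done carefully: after convolving, one picks up commutator terms $[L_{a,b},\eta_{\varepsilon}*]$ coming from the non-constant and merely Lipschitz coefficient $a$ and the merely locally bounded $b$, and these commutators must be shown to remain uniformly controlled in $L^{1}_{\text{loc}}$. The Lipschitz assumption on $a$ in (A1) and the local $L^{p}$ assumption in (A2) are precisely what make this commutator analysis work, and then uniform ellipticity (A3) closes the loop via a Caccioppoli estimate. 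The remaining steps are then fairly routine applications of linear elliptic regularity.
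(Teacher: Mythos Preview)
Your sketch is sound and outlines the standard route to this result. Note, however, that the paper itself does not argue this theorem: it sits in the survey section of borrowed results, and the ``proof'' is a one-line pointer to \cite[Corollaries 2.10 \& 2.11]{bogachev2001regularity}. Your four-step plan---rewrite $L^{*}_{a,b}\mu=0$ in divergence form via the modified drift $\widetilde{b}^{i}=b^{i}-\partial_{j}a^{ij}$, establish absolute continuity, bootstrap with $L^{p}$ elliptic theory to $\rho\in W^{p,1}_{\text{loc}}\subset C^{1-m/p}$, then apply Moser--Harnack---is precisely the strategy executed in those Bogachev--Krylov--R\"ockner papers, so you are reconstructing what the paper defers to rather than departing from it.

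One technical caveat on Step~1: the cited references do not obtain absolute continuity by mollifying $\mu$ and controlling commutators. They instead test the FPK identity against solutions of auxiliary elliptic problems on balls (equivalently, a duality argument combined with Sobolev embedding) to show directly that the distribution $\mu$ is represented by an $L^{q}_{\text{loc}}$ function. Your mollification route is not unreasonable, but closing a Caccioppoli estimate for $\rho_{\varepsilon}$ uniformly in $\varepsilon$ is delicate precisely because the commutator error is integrated against $\mu$, whose regularity is what you are after; the test-function/duality method sidesteps this circularity. You correctly flagged this as the crux, and the clean resolution is in the cited source.
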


\begin{proof}
It relies on the results \cite[Corollary 2.10 \& Corollary 2.11]{bogachev2001regularity} which are slightly more general.
\end{proof}

\begin{theorem}(Regularity II --\cite[Theorem 1.1]{bogachev1996regularity})
\label{thm regularity meas 2}
Assume (A1), (A2), (A3), (A4) and (A6) and let $\mu$ be a Borel probability measure solving \eqref{equation mu_diff op}. Then $\mu=\rho \text{d}x$ where $\rho=\varphi^{2}$ and $\varphi\in W^{2,1}(\mathds{R}^{m})$. In particular, we have $|\nabla \rho|^{2}\rho^{-1}\in L^{1}(\mathds{R}^{m})$.
\end{theorem}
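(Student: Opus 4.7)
The plan is to split the statement into the local regularity (already granted by Theorem \ref{thm regularity meas}) and a global energy estimate for $\sqrt{\rho}$. Under (A1)--(A3), Theorem \ref{thm regularity meas} already yields $\mu = \rho\,dx$ with $\rho \in W^{p,1}_{\text{loc}}(\mathds{R}^m)$ (hence continuous) and, crucially, $\rho > 0$ everywhere on $\mathds{R}^m$. Setting $\varphi := \sqrt{\rho}$, we get $\|\varphi\|_{L^2}^2 = \int \rho\,dx = 1$ for free, and locally $\varphi \in W^{p,1}_{\text{loc}}$ as the square root of a strictly positive Sobolev function. By the chain rule $\nabla\varphi = \nabla\rho/(2\sqrt{\rho})$ (valid where $\rho>0$), the whole statement reduces to proving the \emph{global} integrability
\begin{equation*}
    \int_{\mathds{R}^m} \frac{|\nabla\rho|^2}{\rho}\,dx < \infty,
\end{equation*}
which simultaneously gives $\varphi \in W^{2,1}(\mathds{R}^m)$ and the last claim $|\nabla\rho|^2\rho^{-1} \in L^1(\mathds{R}^m)$.

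Next I would recast the FPK equation $L^{*}_{a,b}\mu = 0$ in divergence form. Since $a \in W^{p,1}_{\text{loc}}$ by (A2), the effective drift $\tilde b^{j} := b^{j} - \partial_{i} a^{ij}$ is a well-defined $L^{p}_{\text{loc}}$ vector field, and an integration by parts in \eqref{sense equation mu_diff op} yields the distributional identity $\partial_{j}[\,a^{ij}\partial_{i}\rho - \tilde b^{j}\rho\,] = 0$ in $\mathds{R}^m$. The heart of the argument is then a weighted $H^{1}$ estimate obtained by testing this divergence-form equation against a regularized version of $-\log\rho$. Concretely I would pick a smooth cutoff $\chi_{R}$ (equal to $1$ on $B_{R}$, supported in $B_{2R}$, with $|\nabla\chi_{R}| \leq C/R$) and an additive regularization $\rho + \eps$, and integrate by parts against $\psi_{R,\eps} := \chi_{R}^{2}/(\rho+\eps)$. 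After expanding $\nabla\psi_{R,\eps}$, invoking the uniform ellipticity (A3), and applying Cauchy--Schwarz to absorb the cross term, one lands on an inequality of the form
\begin{equation*}
    \underline{\Lambda}\!\int \chi_{R}^{2}\,\frac{|\nabla\rho|^{2}}{(\rho+\eps)^{2}}\,\rho\,dx \;\leq\; C\,\|\nabla\chi_{R}\|_{\infty}^{2}\,\mu(B_{2R}\!\setminus\! B_{R}) + C\!\int \chi_{R}^{2}\,|\tilde b|^{2}\,d\mu.
\end{equation*}

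It then remains to let $\eps \downarrow 0$ and $R \uparrow \infty$. The cutoff remainder vanishes since $\mu$ is a finite measure. To control the drift contribution I would invoke assumption (A4)---the inward drift $b(x)\cdot x \leq \gamma_{1} - \gamma_{2}|x|^{\chi}$---together with the polynomial growth (A6), and deduce via a standard Lyapunov argument (test $L^{*}\mu=0$ against $V(x) = (1+|x|^{2})^{k/2}$ for suitably large $k$) that $\mu$ admits polynomial moments of every order, and in particular $\int |\tilde b|^{2}\,d\mu < \infty$. Monotone convergence in $\eps$ and dominated convergence in $R$ then deliver the desired global bound, and hence $\varphi \in W^{2,1}(\mathds{R}^m)$.

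The main obstacle I foresee is the rigorous justification of the singular test function $\psi_{R,\eps}$: since $\rho$ has only Sobolev regularity and $b$ is merely $L^{p}_{\text{loc}}$, one must mollify $\rho$ first (standard convolution), derive the identity for the mollification, and then pass to the limit using the strict positivity of $\rho$ from Theorem \ref{thm regularity meas}, which keeps $1/(\rho+\eps)$ uniformly bounded on compacts and prevents the classical difficulty of dividing by a vanishing density. A secondary but delicate point is calibrating the exponents in the Lyapunov function so that $\int |\tilde b|^{2}\,d\mu < \infty$: here the interplay between $\chi$ in (A4) and $\theta$ in (A6) is essential, since one needs the inward restoring force of rate $|x|^{\chi}$ to overcome the quadratic growth $|x|^{2\theta}$ of $|\tilde b|^{2}$ at infinity, which typically requires a bootstrap in $k$.
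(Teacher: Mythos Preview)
Your approach is essentially a from-scratch reproduction of the Bogachev--Krylov--R\"ockner argument, whereas the paper does not reprove anything: its entire proof is to cite \cite[Theorem 1.1 and Remark 2.3(ii)]{bogachev1996regularity} and observe that the only hypothesis requiring verification, namely $|b|\in L^{2}(\mathds{R}^{m};\mu)$, follows from the polynomial growth (A6) together with the moment bound of Lemma~\ref{conv-prob law}. So the two routes differ in scope: the paper outsources the analytic work and keeps only the hypothesis check (which is precisely your Lyapunov step for $\int|\tilde b|^{2}\,d\mu<\infty$), while you reconstruct the global $H^{1}$ estimate for $\sqrt{\rho}$ directly.

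On the correctness of your sketch: the overall strategy (local regularity from Theorem~\ref{thm regularity meas}, divergence form, energy estimate, Lyapunov moments) is the right one and matches the original proof. There is, however, a mismatch between what you announce and what you write. You say you test against ``a regularized version of $-\log\rho$'' but then set $\psi_{R,\eps}=\chi_{R}^{2}/(\rho+\eps)$. Testing the divergence identity $\partial_{j}[a^{ij}\partial_{i}\rho-\tilde b^{j}\rho]=0$ against $\chi_{R}^{2}/(\rho+\eps)$ produces
\[
\int \chi_{R}^{2}\,\frac{a\nabla\rho\cdot\nabla\rho}{(\rho+\eps)^{2}}\,dx
\]
on the left (no extra factor of $\rho$), and the drift remainder then involves $\int\chi_{R}^{2}|\tilde b|^{2}\,\rho^{2}/(\rho+\eps)^{2}\,dx$, which you \emph{cannot} dominate by $\int|\tilde b|^{2}\,d\mu$ uniformly in $R$ and $\eps$ (since $\rho^{2}/(\rho+\eps)^{2}\leq 1$ only gives control by $\int_{B_{2R}}|\tilde b|^{2}\,dx$). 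Your displayed inequality, with $\frac{|\nabla\rho|^{2}}{(\rho+\eps)^{2}}\rho$ on the left and $\int|\tilde b|^{2}\,d\mu$ on the right, is instead exactly what one obtains from the test function $\chi_{R}^{2}\log(\rho+\eps)$, consistent with your verbal description. With that correction the argument goes through, though the boundary terms then carry a factor $\log(\rho+\eps)$ and need slightly more care than your bound $C\|\nabla\chi_{R}\|_{\infty}^{2}\,\mu(B_{2R}\setminus B_{R})$ suggests.
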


\begin{proof}
See Theorem 1.1 and statement (ii) of Remark 2.3 in \cite{bogachev1996regularity}. Note that the theorem in \cite{bogachev1996regularity} requires the drift $|b|$ to be in $L^{2}(\mathds{R}^{m};\mu)$. But as we shall see in Lemma \ref{conv-prob law}, $\mu$ has finite moments of any order and hence $b$ with a polynomial growth (as in (A6)) satisfies this assumption.
\end{proof}

\begin{theorem}(Existence --\cite[Theorem 5.7]{bogachev2002uniqueness})
\label{thm existence inv meas}
Assume (A1), (A2) and (A3). And assume in addition that there exists a function $\omega\in C^{2}(\mathds{R}^{m})$ s.t.
\begin{equation}\label{eq: lypaunov - survey}
    \omega(x) \to +\infty\;\text{ and }\; L_{a,b}\omega(x) \to -\infty\quad \text{ as } |x|\to \infty.
\end{equation}
Then $\mathcal{M}_{\text{ell}}^{a,b}$ as defined in \eqref{measures in kernel} is non-empty. 
\end{theorem}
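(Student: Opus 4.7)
My plan is to combine a Khasminskii-type Lyapunov estimate with a Krylov--Bogolyubov averaging procedure. Under (A1)--(A3) the martingale problem for $(L_{a,b}, C^{\infty}_{0}(\mathds{R}^{m}))$ admits a weak Markov solution $(X_{t}^{x})$ on $\mathds{R}^{m}$ (this is exactly what the weak well-posedness results cited in the introduction, e.g. \cite{krylov1969ito, krylov37selection, lee2022analytic}, provide). Fixing any $x_{0} \in \mathds{R}^{m}$, I define the time-averaged occupation measures
\[
\mu_{T}(B) := \frac{1}{T}\int_{0}^{T} \mathbb{P}(X_{t}^{x_{0}} \in B)\, dt, \qquad B \in \mathcal{B}(\mathds{R}^{m}),
\]
which are Borel probability measures, and I note that Dynkin's formula applied to $\varphi \in C^{\infty}_{0}(\mathds{R}^{m})$ gives
\[
\int_{\mathds{R}^{m}} L_{a,b}\varphi\, d\mu_{T} = \frac{\mathbb{E}\varphi(X_{T}^{x_{0}}) - \varphi(x_{0})}{T} = O(1/T).
\]
Thus any weak-$*$ cluster point $\mu$ of $\{\mu_{T}\}_{T\to\infty}$ will automatically satisfy $\int L_{a,b}\varphi\, d\mu = 0$, so everything reduces to establishing tightness of $\{\mu_{T}\}$.

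The tightness step is where the Lyapunov hypothesis \eqref{eq: lypaunov - survey} enters. WLOG $\omega \geq 0$ (shift by a constant). Choose $R_{0}>0$ and $\delta>0$ with $L_{a,b}\omega \leq -\delta$ outside $B_{R_{0}}$, and set $M := \sup_{B_{R_{0}}} L_{a,b}\omega < \infty$ (finite by (A1)--(A3)). Applying It\^o's formula to a smooth cutoff $\omega_{k}\uparrow \omega$ up to the exit time $\tau_{n} := \inf\{t : |X_{t}^{x_{0}}|\geq n\}$, then sending $k,n\to\infty$ via Fatou, I get
\[
-\frac{\omega(x_{0})}{T} \leq \frac{\mathbb{E}\omega(X_{T}^{x_{0}}) - \omega(x_{0})}{T} \leq \int_{\mathds{R}^{m}} L_{a,b}\omega\, d\mu_{T} \leq M - \delta\, \mu_{T}(\mathds{R}^{m}\setminus B_{R_{0}}),
\]
yielding the uniform bound $\mu_{T}(\mathds{R}^{m}\setminus B_{R_{0}}) \leq (M+\omega(x_{0}))/\delta$ for all $T\geq 1$. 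Repeating the argument with the sets $\{\omega \geq N\}$ (which, by $\omega\to\infty$, lie in complements of ever-larger balls) yields tightness of $\{\mu_{T}\}$ on $\mathds{R}^{m}$.

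Prokhorov's theorem then produces a subsequence $\mu_{T_{n}}\rightharpoonup \mu$ weakly-$*$ to a probability measure $\mu$ on $\mathds{R}^{m}$. Since for every $\varphi \in C^{\infty}_{0}(\mathds{R}^{m})$ the function $L_{a,b}\varphi$ is bounded and continuous on the (compact) support of $\varphi$ by (A1) and local boundedness of $b$, passing to the limit in the Dynkin identity gives $\int L_{a,b}\varphi\, d\mu = 0$. The remaining integrability condition of \eqref{sense equation mu_diff op}, namely $a^{ij}, b^{i} \in L^{1}_{\text{loc}}(\mathds{R}^{m};\mu)$, is immediate from (A1)--(A2) together with local finiteness of $\mu$. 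Hence $\mu \in \mathcal{M}_{\text{ell}}^{a,b}$.

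The main obstacle I foresee is the rigorous justification of It\^o's formula applied to $\omega$: since $b$ is only locally bounded and Borel measurable and $\omega$ may grow arbitrarily fast at infinity, one must truncate $\omega$ smoothly, localize via $\tau_{n}$, and use the Lyapunov property itself to control the boundary terms $\mathbb{E}[\omega(X_{\tau_{n}}^{x_{0}})\mathds{1}_{\tau_{n}\leq T}]$ when taking limits. A purely PDE alternative---which is essentially the route in \cite{bogachev2002uniqueness}---bypasses the stochastic construction: for each $R$ solve the adjoint Dirichlet/Neumann problem on $B_{R}$ to obtain a probability $\mu_{R}$ with $\int_{B_{R}} L_{a,b}\varphi\, d\mu_{R} = 0$ for all $\varphi \in C^{\infty}_{c}(B_{R})$, then plug a smooth cutoff $\omega_{k}\chi_{R}$ into this identity to run exactly the same Lyapunov tightness estimate and extract a weak-$*$ limit.
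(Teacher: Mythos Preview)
Your Krylov--Bogolyubov scheme and the Lyapunov tightness estimate are the right ideas, and the paper itself gives no argument beyond citing \cite[Theorem 1.2]{bogachev2000generalization}, so you are already providing more than what appears there. But there is a real gap in the passage to the limit. You assert that $L_{a,b}\varphi$ is ``bounded and continuous on the (compact) support of $\varphi$ by (A1) and local boundedness of $b$''. Boundedness is correct, continuity is not: assumption (A1)(ii) only gives that $b$ is Borel measurable and locally bounded, so $b^{i}\partial_{i}\varphi$, and hence $L_{a,b}\varphi$, is merely bounded measurable with compact support. Weak convergence $\mu_{T_{n}}\rightharpoonup\mu$ therefore does \emph{not} yield $\int L_{a,b}\varphi\,d\mu_{T_{n}}\to\int L_{a,b}\varphi\,d\mu$, and the argument as written does not close.

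This is precisely why the route in \cite{bogachev2000generalization,bogachev2002uniqueness}---which you correctly describe at the end as the PDE alternative---is the one actually used. There one constructs probability measures $\mu_{R}$ on balls $B_{R}$ solving $L^{*}_{a,b}\mu_{R}=0$ in $B_{R}$, and then invokes the elliptic regularity behind Theorem~\ref{thm regularity meas} to obtain densities $\rho_{R}$ with \emph{uniform} bounds in $W^{p,1}_{\mathrm{loc}}$. A subsequence then converges weakly in $W^{p,1}_{\mathrm{loc}}$, not merely weak-$*$ as measures; since $L_{a,b}\varphi\in L^{p'}$ with compact support, this is exactly the mode of convergence needed to pass to the limit against a discontinuous integrand. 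Your probabilistic construction can be salvaged by the same mechanism (the occupation measures $\mu_{T}$ are absolutely continuous by (A3), and Krylov-type estimates give uniform $L^{p}_{\mathrm{loc}}$ bounds on their densities), but you must make that density-compactness step explicit; invoking weak convergence of measures alone is not enough under the stated hypotheses.
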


\begin{proof}
It relies on \cite[Theorem 1.2]{bogachev2000generalization}. See also \cite{bogachev2002uniqueness} and references therein.
\end{proof}

\begin{corollary}\label{cor: survey lyapunov}
Assume (A1), (A2), (A3) and (A4). Then $\omega(x):=|x|^{2}$ fulfills \eqref{eq: lypaunov - survey} and the conclusion of Theorem \ref{thm existence inv meas} holds.
\end{corollary}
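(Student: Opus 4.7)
The plan is a direct verification: take $\omega(x) = |x|^{2}$, compute $L_{a,b}\omega$, and control each term via assumptions (A3) and (A4) so that the hypotheses of Theorem \ref{thm existence inv meas} are met.

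First, the coercivity $\omega(x) \to +\infty$ as $|x| \to \infty$ is immediate. Since $\omega\in C^{\infty}(\mathds{R}^{m})\subset C^{2}(\mathds{R}^{m})$, the regularity required to apply $L_{a,b}$ is available. A direct computation gives $\partial_{i}\omega(x) = 2x_{i}$ and $\partial_{i}\partial_{j}\omega(x) = 2\delta_{ij}$, so
\begin{equation*}
    L_{a,b}\omega(x) \;=\; 2\,\operatorname{trace}(a(x)) \;+\; 2\, b(x)\cdot x.
\end{equation*}

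For the trace term, (A3) applied with the standard basis vectors $\xi = e_{i}$ yields $a^{ii}(x)\leq \overline{\Lambda}$ for each $i$, hence $\operatorname{trace}(a(x)) \leq m\overline{\Lambda}$ uniformly in $x$ (and, in the general framework of the subsection, uniformly in the fixed parameter $\alpha$). For the drift term, (A4) gives $b(x)\cdot x \leq \gamma_{1} - \gamma_{2}|x|^{\chi}$. Combining,
\begin{equation*}
    L_{a,b}\omega(x) \;\leq\; 2m\overline{\Lambda} + 2\gamma_{1} - 2\gamma_{2}|x|^{\chi} \;\longrightarrow\; -\infty \quad\text{as }|x|\to\infty,
\end{equation*}
since $\gamma_{2},\chi>0$ by (A4). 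Thus both conditions in \eqref{eq: lypaunov - survey} are satisfied.

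With \eqref{eq: lypaunov - survey} verified, Theorem \ref{thm existence inv meas} applies verbatim and produces a probability measure in $\mathcal{M}_{\text{ell}}^{a,b}$, so $\mathcal{M}_{\text{ell}}^{a,b}\neq\emptyset$. There is essentially no obstacle here; the only minor point to keep in mind is that (A3) must be invoked to bound the trace independently of $x$ (and $\alpha$), and that (A4) already contains the essential inward-pointing information that makes $|x|^{2}$ a Lyapunov function for $L_{a,b}$.
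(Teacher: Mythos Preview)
Your proof is correct and is precisely the direct verification that the paper defers to the cited references \cite[Corollary~1.4 and Corollary~1.3(ii)]{bogachev2000generalization}; you have simply written out the computation in full rather than citing it.
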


\begin{proof}
See the proof of \cite[Corollary 1.4]{bogachev2000generalization} and also \cite[Corollary 1.3(ii)]{bogachev2000generalization}.
\end{proof}

Let us consider now the situation of Theorem \ref{thm existence inv meas}. Fix $\mu\in\mathcal{M}_{\text{ell}}^{a,b}$. As observed in \cite[\S 2.3]{bogachev1999uniqueness}, by Theorem \ref{thm regularity meas}, $\mu$ is equivalent to Lebesgue measure, and therefore is strictly positive on all non-empty open subsets of $\mathds{R}^{m}$. So $C^{\infty}_{0}(\mathds{R}^{m})$ can be identified with a subset of $L^{1}(\mathds{R}^{m};\mu)$, since each corresponding $\mu$-class has a unique continuous $\mu$-version. Hence the operator $(L_{a,b},C^{\infty}_{0}(\mathds{R}^{m}))$ is well defined on $L^{1}(\mathds{R}^{m};\mu)$. The following theorem relies on \textit{dissipativity} and \textit{essential m-dissipativity} of the operator $L_{a,b}$ (see definition in \cite{bogachev2002uniqueness}, end of \S 1). 

\begin{theorem}
\label{thm closed extension}
Assume (A1), (A2), (A3) and (A4). Then $\mathcal{M}^{a,b}_{\text{ell}}=\{\mu\}$ is a singleton and the following statements hold true
\begin{enumerate}[label=(\roman*)]
    \item there exists a closed extension of the operator $(L_{a,b},C^{\infty}_{0}(\mathds{R}^{m}))$ on $L^{1}(\mathds{R}^{m};\mu)$;
    \item its closure $(\overline{L}^{\mu}_{a,b}, D(\overline{L}^{\mu}_{a,b}))$ on $L^{1}(\mathds{R}^{m};\mu)$ generates a  $C_{0}$-semigroup $(T^{\mu}_{t})_{t\geq 0}$ on $L^{1}(\mathds{R}^{m};\mu)$;
    \item $(T^{\mu}_{t})_{t\geq 0}$ is the only $C_{0}$-semigroup on $L^{1}(\mathds{R}^{m};\mu)$ which has a generator extending $(L_{a,b},C^{\infty}_{0}(\mathds{R}^{m}))$;
    \item $(T^{\mu}_{t})_{t\geq 0}$ is contractive, and $\mu$ is $(T^{\mu}_{t})_{t\geq 0}$-invariant in the sense
    \begin{equation}\label{eq: semigroup invariance}
        \int_{\mathds{R}^{m}}T^{\mu}_{t}fd\mu = \int_{\mathds{R}^{m}}fd\mu,\quad \forall\;f\in L^{\infty}(\mathds{R}^{m};\mu).
    \end{equation}
\end{enumerate}
\end{theorem}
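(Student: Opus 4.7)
The plan is to bootstrap from the cited Bogachev-Röckner-Stannat machinery rather than redo the analysis. First I would deal with non-emptiness of $\mathcal{M}^{a,b}_{\text{ell}}$: under (A1)-(A4), Corollary \ref{cor: survey lyapunov} already provides $\omega(x)=|x|^{2}$ as a Lyapunov function, so $\mathcal{M}^{a,b}_{\text{ell}}\neq\emptyset$. Pick any $\mu\in\mathcal{M}^{a,b}_{\text{ell}}$; by Theorem \ref{thm regularity meas} it is mutually absolutely continuous with Lebesgue measure with a strictly positive continuous density. This is the key structural fact: it guarantees that $C^{\infty}_{0}(\mathds{R}^{m})$ embeds injectively into $L^{1}(\mathds{R}^{m};\mu)$, so the operator $(L_{a,b},C^{\infty}_{0}(\mathds{R}^{m}))$ is well defined there.

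Next I would prove (i), i.e.\ closability on $L^{1}(\mathds{R}^{m};\mu)$, via dissipativity. For $\varphi\in C^{\infty}_{0}$ and a suitable subgradient $\eta\in\partial\|\varphi\|_{L^{1}(\mu)}$ (essentially $\eta=\mathrm{sgn}(\varphi)$ on $\{\varphi\neq 0\}$), one shows the Kato-type inequality $\int \eta\, L_{a,b}\varphi\, d\mu \leq 0$ by regularising $\mathrm{sgn}$, integrating by parts against $\mu$, and invoking the FPK identity \eqref{equation mu_diff op} to kill the boundary contribution. This yields dissipativity, hence closability; calling the closure $(\overline{L}^{\mu}_{a,b},D(\overline{L}^{\mu}_{a,b}))$ gives (i).

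For (ii) and (iii), the real work lies in essential m-dissipativity, i.e.\ showing that the range $(\lambda-L_{a,b})(C^{\infty}_{0}(\mathds{R}^{m}))$ is dense in $L^{1}(\mathds{R}^{m};\mu)$ for some (equivalently all) $\lambda>0$. This is where I expect the main obstacle: one must solve the resolvent equation $\lambda u - L_{a,b}u = g$ in $L^{1}(\mu)$ with smooth compactly supported approximations, and the only handle on global behaviour is the Lyapunov function from (A4). The strategy, following \cite{bogachev2002uniqueness,stannat1999nonsymmetric}, is to exhibit $g$ as an $L^{1}(\mu)$-limit of $(\lambda-L_{a,b})\varphi_{n}$ by combining local Sobolev regularity (Theorem \ref{thm regularity meas}) with a Lyapunov-based cut-off argument that controls the tails via $L_{a,b}|x|^{2}\to -\infty$. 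Once the range-density is in hand, the Lumer-Phillips theorem delivers a contractive $C_{0}$-semigroup $(T^{\mu}_{t})_{t\geq 0}$, and essential m-dissipativity forces any other $C_{0}$-semigroup whose generator extends $(L_{a,b},C^{\infty}_{0})$ to coincide with $(T^{\mu}_{t})_{t\geq 0}$, yielding (iii) and the contractivity half of (iv).

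Finally, to get the invariance identity \eqref{eq: semigroup invariance} and the singleton conclusion $\mathcal{M}^{a,b}_{\text{ell}}=\{\mu\}$, I would differentiate $t\mapsto\int T^{\mu}_{t}\varphi\, d\mu$ for $\varphi\in C^{\infty}_{0}$, use that $\int L_{a,b}\varphi\, d\mu=0$ by \eqref{sense equation mu_diff op}, and pass the resulting $\int T^{\mu}_{t}\varphi\, d\mu=\int \varphi\, d\mu$ to $f\in L^{\infty}(\mathds{R}^{m};\mu)$ by density plus the contractivity of $T^{\mu}_{t}$ on $L^{1}(\mu)\cap L^{\infty}(\mu)$ together with duality. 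For uniqueness of $\mu$, suppose $\mu_{1},\mu_{2}\in\mathcal{M}^{a,b}_{\text{ell}}$; the key observation from \cite{bogachev1999uniqueness,bogachev2002uniqueness} is that essential m-dissipativity on $L^{1}(\mu_{i})$ characterises $\mu_{i}$ as the unique invariant probability measure of the semigroup it generates, and a symmetric comparison (writing $\mu_{2}$ against the $\mu_{1}$-semigroup via the mutual equivalence from Theorem \ref{thm regularity meas}) forces $\mu_{1}=\mu_{2}$. This closes the argument.
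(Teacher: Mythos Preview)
Your proposal is correct and follows exactly the route of the cited references \cite{bogachev1999uniqueness,bogachev2002uniqueness,stannat1999nonsymmetric}: dissipativity for closability, essential m-dissipativity via the Lyapunov cut-off for the range density, Lumer--Phillips for the semigroup, and the Bogachev--R\"ockner--Stannat comparison argument for uniqueness of $\mu$. The paper's own proof, being in the survey section, simply cites Corollary~\ref{cor: survey lyapunov} together with \cite[Example~5.1, Theorem~2.3, Proposition~2.6]{bogachev2002uniqueness} rather than unpacking these steps, so you have essentially reconstructed what lies behind those citations.
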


\begin{proof}
The set $\mathcal{M}^{a,b}_{\text{ell}}$ being a singleton (i.e. existence and uniqueness of the invariant measure $\mu$) is a consequence of Corollary 1 above and \cite[Example 5.1]{bogachev2002uniqueness}. The other statements rely on the results of Theorem 2.3 and Proposition 2.6 in \cite{bogachev2002uniqueness}.
\end{proof}

Thanks to this result, we can now define on a larger space the operator $\mathcal{L}$ in the problem \eqref{eq: intro - pde}. This is an important step when dealing with unbounded right-hand side terms $f$ in \eqref{eq: intro - pde}, since there cannot exist any solution in $C^{\infty}_{0}(\mathds{R}^{m})$.\\
Indeed, the differential operator $(\mathcal{L}, D(\mathcal{L}))$ in \eqref{eq: intro - pde} should be understood in the sense of the closed extension $(\overline{L}^{\mu}_{a,b},D(\overline{L}^{\mu}_{a,b}))$ provided by Theorem \ref{thm closed extension}, where $D(\overline{L}^{\mu}_{a,b})$ is the closure of $C^{\infty}_{0}(\mathds{R}^m)$ in $L^{1}(\mathds{R}^{m};\mu)$. More precisely, we have $C^{\infty}_{0}(\mathds{R}^m)\subset D(\overline{L}^{\mu}_{a,b})\subset L^{1}(\mathds{R}^{m};\mu)$ with dense inclusions.

In the following, we state from \cite{bogachev2002uniqueness} a theorem which makes $D(\overline{L}^{\mu}_{a,b})$ more precise. In fact, for every $r\in [1,+\infty)$, the restriction of the semigroup $\{T^{\mu}_{t}\}_{t\geq 0}$, whose generator is $\overline{L}^{\mu}_{a,b}$, to $L^{r}(\mathds{R}^{m};\mu)$ is a strongly continuous semigroup on $L^{r}(\mathds{R}^{m};\mu)$ (see \cite[Lemma 5.1.4, p. 180]{bogachev2015fokker}). Its generator will be denoted by $(L^{\mu,r}_{a,b},D(L^{\mu,r}_{a,b}))$, where
\begin{equation*}
    D(L^{\mu,r}_{a,b}) = \{f\in D(L^{\mu}_{a,b})\cap L^{r}(\mathds{R}^{m};\mu): \; L^{\mu}_{a,b}f\in L^{r}(\mathds{R}^{m};\mu)\}.
\end{equation*}

\begin{theorem}(\cite[Theorem 2.8(i)]{bogachev2002uniqueness})\label{thm sobolev domain extension}
In the situation of Theorem \ref{thm closed extension}, one has for any $r\in [1,+\infty)$
\begin{equation}\label{eq: domain extension}
\begin{aligned}
    & (L^{\mu,r}_{a,b},D(L^{\mu,r}_{a,b})) \subset \{f\in L^{r}(\mathds{R}^{m};\mu)\cap W^{r,2}_{\text{loc}}(\mathds{R}^m)\,:\, L_{a,b}f\in L^{r}(\mathds{R}^{m};\mu)\}\\
    & \quad \text{and }\; L^{\mu,r}_{a,b}f=L_{a,b}f\quad \text{for all}\; f\in D(L^{\mu,r}_{a,b}).
\end{aligned}
\end{equation}
\end{theorem}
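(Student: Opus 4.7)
The plan is to show that if $f$ belongs to $D(L^{\mu,r}_{a,b})$, then $f$ automatically lies in $W^{r,2}_{\text{loc}}(\mathds{R}^m)$ and the abstract action $L^{\mu,r}_{a,b}f$ coincides with the classical differential expression $L_{a,b}f$. The argument splits into a distributional identification step and an interior elliptic regularity step.

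First, I would unpack the definition of the domain. Since $(L^{\mu,r}_{a,b},D(L^{\mu,r}_{a,b}))$ is the generator of the $L^{r}(\mathds{R}^{m};\mu)$-restriction of the semigroup of Theorem \ref{thm closed extension}, and since $C^{\infty}_{0}(\mathds{R}^{m})$ is a core for the closure $\overline{L}^{\mu}_{a,b}$, any $f\in D(L^{\mu,r}_{a,b})$ admits a sequence $\{f_{n}\}\subset C^{\infty}_{0}(\mathds{R}^{m})$ with $f_{n}\to f$ and $L_{a,b}f_{n}\to g:=L^{\mu,r}_{a,b}f$ both in $L^{r}(\mathds{R}^{m};\mu)$. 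By Theorem \ref{thm regularity meas}, the density $\rho$ of $\mu$ is continuous and strictly positive, hence bounded and bounded away from zero on every compact subset of $\mathds{R}^{m}$. Consequently convergence in $L^{r}(\mathds{R}^{m};\mu)$ implies convergence in $L^{r}_{\text{loc}}(\mathds{R}^{m})$ with respect to Lebesgue measure.

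Second, I would identify $g$ with $L_{a,b}f$ in the distributional sense. For every $\varphi\in C^{\infty}_{0}(\mathds{R}^{m})$, integration by parts yields
\begin{equation*}
\int_{\mathds{R}^{m}} (L_{a,b}f_{n})\,\varphi\,dx = \int_{\mathds{R}^{m}} f_{n}\,L_{a,b}^{*}\varphi\,dx,
\end{equation*}
which makes sense under assumption (A2) because the formal adjoint has coefficients in $L^{p}_{\text{loc}}$ with $p>m$, and $L^{*}_{a,b}\varphi$ is thus in $L^{p}_{\text{loc}}(\mathds{R}^{m})$. Passing to the limit using the local Lebesgue convergences established above gives $\int g\,\varphi\,dx=\int f\, L_{a,b}^{*}\varphi\,dx$ for all test functions, i.e.\ $L_{a,b}f=g$ in $\mathcal{D}'(\mathds{R}^{m})$.

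Third, the interior regularity step: $f\in L^{r}_{\text{loc}}(\mathds{R}^{m})$ is a distributional solution of $L_{a,b}f=g$ with $g\in L^{r}_{\text{loc}}(\mathds{R}^{m})$, where $L_{a,b}$ is uniformly elliptic by (A3) and has coefficients $a^{ij}\in W^{p,1}_{\text{loc}}$, $b^{i}\in L^{p}_{\text{loc}}$ for some $p>m$ by (A1)–(A2). These are exactly the hypotheses required for the local $W^{r,2}$-estimate of Calder\'on--Zygmund type for strong solutions, as developed in the Bogachev--Krylov--R\"ockner line of work on elliptic equations for measures (the same framework underlying Theorems \ref{thm regularity meas}--\ref{thm regularity meas 2}). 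Invoking this estimate on each open ball gives $f\in W^{r,2}_{\text{loc}}(\mathds{R}^{m})$, the equation $L_{a,b}f=g$ now holds pointwise almost everywhere, and since $g=L^{\mu,r}_{a,b}f\in L^{r}(\mathds{R}^{m};\mu)$ we obtain the asserted inclusion together with the identity $L^{\mu,r}_{a,b}f=L_{a,b}f$.

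The main obstacle is the third step: justifying interior $W^{r,2}_{\text{loc}}$-regularity of a merely distributional $L^{r}_{\text{loc}}$-solution to a non-divergence form elliptic equation whose coefficients are only Sobolev/$L^{p}$-regular rather than H\"older. For $r\geq 2$ (where one can mollify and use energy-type arguments) this is standard, but covering the full range $r\in[1,+\infty)$—and in particular $r<2$—requires the sharper Calder\'on--Zygmund estimates tailored to coefficients in $W^{p,1}_{\text{loc}}$ with $p>m$. Everything else (the approximation by $C^{\infty}_{0}$ and the distributional passage to the limit) is routine once the continuity and strict positivity of $\rho$ have been used to relate $L^{r}(\mu)$ and $L^{r}_{\text{loc}}(dx)$ convergence.
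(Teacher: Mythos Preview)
The paper does not supply its own proof of this theorem: it is quoted verbatim as part of the survey section \S\ref{sec:extension diff} and attributed to \cite[Theorem~2.8(i)]{bogachev2002uniqueness}. So there is no in-paper argument to compare against; your proposal is effectively a reconstruction of the Bogachev--Krylov--R\"ockner proof.

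Your outline is the right one---distributional identification followed by local Calder\'on--Zygmund regularity---and the honest flagging of the $r<2$ case as the delicate point is accurate. One small caution: you assert that $C^{\infty}_{0}(\mathds{R}^{m})$ is a core for the $L^{r}$-generator and hence that the approximating sequence converges in $L^{r}(\mu)$. What Theorem~\ref{thm closed extension} gives directly is that $C^{\infty}_{0}$ is a core on $L^{1}(\mu)$, so a priori the graph-norm approximation is only in $L^{1}(\mu)$. This does not harm your argument, though: $L^{1}(\mu)$-convergence still yields $L^{1}_{\text{loc}}(dx)$-convergence via the strictly positive continuous density, which is all you need for the distributional passage to the limit; and you already know independently that $f$ and $g=L^{\mu,r}_{a,b}f$ lie in $L^{r}(\mu)\subset L^{r}_{\text{loc}}(dx)$, which is what feeds into the elliptic estimate. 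So either justify the $L^{r}$-core claim separately (it does hold, but requires an extra step), or simply downgrade the convergence to $L^{1}(\mu)$ and observe that nothing is lost.
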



The next result is from \cite{veretennikov1997polynomial} and concerns the moments of the invariant measure. 
\begin{lemma}\label{conv-prob law}
Assuming (A1), (A3) and (A4), the invariant probability measure $\mu$ exists and has finite moments of any order $\ell\geq1$, i.e. $\int_{\mathds{R}^{m}}|x|^{\ell}\,\text{d}\mu(x) <+\infty$. 
\end{lemma}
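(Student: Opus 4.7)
The plan is the standard Hasminski--Lyapunov strategy, which is in fact the backbone of \cite{veretennikov1997polynomial}. Existence of the invariant probability measure $\mu$ follows at once from Corollary \ref{cor: survey lyapunov}: choosing $\omega(x):=|x|^{2}$, assumptions (A3) and (A4) yield
\begin{equation*}
L_{a,b}\omega(x) \;=\; 2\,\mathrm{tr}(a(x)) + 2\,b(x)\cdot x \;\leq\; 2m\overline{\Lambda} + 2\gamma_{1} - 2\gamma_{2}|x|^{\chi},
\end{equation*}
which tends to $-\infty$ as $|x|\to\infty$, so Theorem \ref{thm existence inv meas} applies.

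For the moments, fix $\ell\geq 1$ and consider the Lyapunov function $V_{\ell}(x):=(1+|x|^{2})^{\ell/2}$. A direct derivative computation, together with (A3) (controlling $\mathrm{tr}(a)$ and the Hessian quadratic form $\langle x,ax\rangle$) and (A4) (controlling the drift term $b\cdot x$), produces constants $A_{\ell},B_{\ell}>0$ such that
\begin{equation}\label{eq:lyap-estim-plan}
L_{a,b}V_{\ell}(x) \;\leq\; A_{\ell} \,-\, B_{\ell}\,(1+|x|^{2})^{(\ell+\chi-2)/2}, \qquad \forall\, x\in\mathds{R}^{m},
\end{equation}
the dominant negative contribution being $-\ell\gamma_{2}|x|^{\chi}(1+|x|^{2})^{\ell/2-1}$. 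Since $V_{\ell}$ is unbounded it is not directly admissible in \eqref{sense equation mu_diff op}, so introduce a smooth concave truncation $\phi_{N}\in C^{\infty}([0,\infty))$ satisfying $\phi_{N}(s)=s$ for $s\leq N$, $\phi_{N}'(s)=0$ for $s\geq 3N$, $\phi_{N}'\in[0,1]$, $\phi_{N}''\leq 0$, and $\phi_{N}'(s)\nearrow 1$ as $N\to\infty$ for every $s$ (e.g.\ $\phi_{N}(s):=\int_{0}^{s}\eta(t/N)\,dt$ with a smooth non-increasing cut-off $\eta$). Then $V_{\ell}^{N}:=\phi_{N}\circ V_{\ell}$ is smooth, bounded, and equal to a constant $c_{N}$ outside a compact set, so $V_{\ell}^{N}-c_{N}\in C^{\infty}_{0}(\mathds{R}^{m})$.

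Applying \eqref{sense equation mu_diff op} to $V_{\ell}^{N}-c_{N}$ yields $\int L_{a,b}V_{\ell}^{N}\,d\mu = 0$. By the chain rule,
\begin{equation*}
L_{a,b}V_{\ell}^{N} \;=\; \phi_{N}'(V_{\ell})\,L_{a,b}V_{\ell} \;+\; \phi_{N}''(V_{\ell})\,\langle \nabla V_{\ell},\,a\nabla V_{\ell}\rangle,
\end{equation*}
and the second summand is non-positive by concavity of $\phi_{N}$ and positivity of $a$. Hence $\int \phi_{N}'(V_{\ell})\,L_{a,b}V_{\ell}\,d\mu \,\geq\, 0$, which combined with \eqref{eq:lyap-estim-plan} and $\phi_{N}'\leq 1$ gives
\begin{equation*}
B_{\ell}\!\int\! \phi_{N}'(V_{\ell})\,(1+|x|^{2})^{(\ell+\chi-2)/2}\,d\mu \;\leq\; A_{\ell}\!\int\! \phi_{N}'(V_{\ell})\,d\mu \;\leq\; A_{\ell}.
\end{equation*}
Letting $N\to\infty$, monotone convergence yields $\int (1+|x|^{2})^{(\ell+\chi-2)/2}\,d\mu \leq A_{\ell}/B_{\ell}<\infty$. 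Since $\ell\geq 1$ is arbitrary and $\chi>0$, all polynomial moments of $\mu$ are finite. The principal obstacle is to legitimately test \eqref{sense equation mu_diff op} against a function that is not compactly supported; the design of $\phi_{N}$ (with compactly supported derivatives) precisely bypasses this, since $V_{\ell}^{N}-c_{N}\in C^{\infty}_{0}(\mathds{R}^{m})$ by construction, so no auxiliary spatial cut-off and no supplementary integrability of $b$ beyond what \eqref{sense equation mu_diff op} already provides are needed.
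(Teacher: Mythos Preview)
Your argument is correct. It is the classical Hasminski--Lyapunov route: build the family of radial Lyapunov functions $V_{\ell}=(1+|x|^{2})^{\ell/2}$, derive the drift-domination bound \eqref{eq:lyap-estim-plan} from (A3)--(A4), and use a concave truncation $\phi_{N}$ so that the invariance identity \eqref{sense equation mu_diff op} applies verbatim. The only delicate step---passing from $V_{\ell}$ to a legitimate $C^{\infty}_{0}$ test function---is handled cleanly by making $\phi_{N}$ eventually constant, and the sign of $\phi_{N}''$ lets you discard the curvature term. Monotone convergence then gives the $\big((\ell+\chi-2)/2\big)$-th moment, and letting $\ell$ range over $[1,\infty)$ yields every polynomial moment.

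The paper's own proof is different in style: it is a black-box citation of \cite[Theorem~6]{veretennikov1997polynomial}, which gives finite moments of order $\ell\in(2k+2,2r_{0}-1)$ with $r_{0}$ built from the constant $r$ in the dissipativity condition $b(x)\cdot x\le -r$ for $|x|$ large. Assumption (A4) makes $r$ (hence $r_{0}$) arbitrarily large, so the admissible window of orders is unbounded, and H\"older's inequality fills in the remaining small orders. What you do is, in effect, reprove from scratch the piece of Veretennikov's theorem that is actually needed here, with the advantage that your argument is fully self-contained and exposes exactly where each of (A3), (A4) enters. The paper's approach is shorter but relies on locating and matching the hypotheses of an external result; yours is longer but transparent and does not require the reader to consult \cite{veretennikov1997polynomial}.
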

\begin{proof}
This is a particular case of the more general result in \cite[Theorem 6]{veretennikov1997polynomial} (see in particular \cite[eq. (28) in \S 6]{veretennikov1997polynomial}). Indeed, the main assumption in \cite{veretennikov1997polynomial} is
\begin{equation}
\label{assumption-ver}
    \exists\;M_{0}\geq 0,\; r\geq 0 \;\text{ s.t. }\quad \langle b(x),x \rangle \leq -r,\quad \forall\;|x|\geq M_{0}
\end{equation}
Then introduce the following constants
\begin{equation*}
    \begin{aligned}
        & \lambda_{-}:=\inf\limits_{x\neq 0}\; \langle \varrho\varrho^{*}(x)\frac{x}{|x|},\frac{x}{|x|} \rangle,\quad
        & \lambda_{+} := \sup\limits_{x\neq 0}\;\langle \varrho\varrho^{\top}(x)\frac{x}{|x|},\frac{x}{|x|} \rangle\\
        & \Tilde{\Lambda} := \sup\limits_{x}\frac{\text{trace}(\varrho\varrho^{\top}(x))}{m}, \quad
        & r_{0} := [r - (m\Tilde{\Lambda} - \lambda_{-})/2]\lambda_{+}^{-1}
    \end{aligned}
\end{equation*}
In this context, it is shown that the invariant measure has finite moments of order $\ell\in(2k+2, 2r_{0}-1)$, where again $k\in (0,r_{0}-\frac{3}{2})$. 
In our case, assumption (A4) guarantees a constant $r$ (in \eqref{assumption-ver}) as large as we want. Then using H\"older inequality together with the fact that $\mu(\mathds{R}^{m})=1$, one proves finite moments of any order $\ell\geq 1$.
\end{proof}

\begin{remark}\label{rmk: weak assumptions}
The growth condition (A5) can be replaced by an integrability condition with respect to the invariant measure, i.e. $f\in L^{1}(\mathds{R}^{m};\mu)$. Thus, also some exponentially growing functions $f$ can be handled analogously.  
Indeed, if $b(x) = -\tilde{\gamma}\,x$ for some $\Tilde{\gamma}>0$ and $a(x)=I$ the identity matrix, then the stochastic process is an Ornstein-Uhlenbeck whose invariant (Gibbs) measure behaves as $e^{-\tilde{\gamma}|x|^{2}}$ and allows to perform the subsequent computations.
\end{remark}

\subsection{Distance between invariant measures}
\label{sec:distance inv}
The results of this subsection are taken from \cite{bogachev2018poisson} and concern the distance between two stationary solutions (invariant probability measures) to FPK equation \eqref{equation mu_diff op} with different diffusion coefficients $a_{\alpha} = (a^{ij}_{\alpha}), a_{\beta} = (a^{ij}_{\beta})$ and drift coefficients $b_{\alpha} = (b^{i}_{\alpha}), b_{\beta} = (b^{i}_{\beta})$. 
We denote by $\mu_{\alpha},\mu_{\beta}$ the two Borel probability measures solving $L^{*}_{a_{\alpha},b_{\alpha}}\mu_{\alpha}=0$ and $L^{*}_{a_{\beta},b_{\beta}}\mu_{\beta}=0$ as discussed in \S \ref{sec:extension diff}, and by $\rho_{\alpha},\rho_{\beta}$ their corresponding continuous densities. We introduce the following notation as in \cite{bogachev2018poisson}:
\begin{equation*}
    h_{\alpha} = (h^{i}_{\alpha})^{m}_{i=1},\quad h^{i}_{\alpha} = b^{i}_{\alpha} - \sum\limits_{j=1}^{m}\partial_{x_{j}}a^{ij}_{\alpha}, \text{ and } \quad h_{\beta} = (h^{i}_{\beta})^{m}_{i=1},\quad h^{i}_{\beta} = b^{i}_{\beta} - \sum\limits_{j=1}^{m}\partial_{x_{j}}a^{ij}_{\beta},
\end{equation*}
and define \vspace*{-3mm}
\begin{equation*}
    \Phi = (a_{\beta} - a_{\alpha})\frac{\nabla \rho_{\alpha}}{\rho_{\alpha}} + h_{\alpha} - h_{\beta}.
\end{equation*}
Note that $\Phi = b_{\alpha}  -b_{\beta}$ if $a_{\alpha}=a_{\beta}$. We also denote by $\|\cdot\|_{TV}$ the total variation norm of a signed measure, defined as the sum of values on the whole space of its positive and negative parts. Our $\mu_{\alpha},\mu_{\beta}$ are denoted in \cite{bogachev2018poisson} by $\sigma,\mu$ respectively.

\begin{theorem}(\cite[Theorem 3.2]{bogachev2018poisson})\label{thm distance}
Assume (A1), (A2), (A3), (A4) and (A6). If $(1+|x|)^{\theta}|\Phi|\in L^{1}(\mathds{R}^{m};\mu_{\alpha})$, where $\theta$ is as in (A6), then 
\begin{equation*}
    \|\mu_{\alpha} - \mu_{\beta}\|_{TV} \leq C \int_{\mathds{R}^{m}}(1+|x|)^{\theta}|\Phi|\text{d}\mu_{\alpha}
\end{equation*}
where $C$ depends on the constants in the assumptions and on the diffusion matrix $a_{\beta}$. 
\end{theorem}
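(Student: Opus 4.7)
The plan is to argue by duality via a Poisson equation for $L_{a_\beta,b_\beta}$. Recall that
\[
\|\mu_\alpha-\mu_\beta\|_{TV}=\sup_{\|g\|_\infty\le 1}\Bigl|\int_{\mathds{R}^m} g\, d\mu_\alpha-\int_{\mathds{R}^m} g\, d\mu_\beta\Bigr|,
\]
so I would fix a bounded Borel function $g$ with $\|g\|_\infty\le 1$ and estimate the integral difference uniformly. Under (A1)--(A4) the operator $(L_{a_\beta,b_\beta},C_0^\infty)$ admits the closed extension of Theorem \ref{thm closed extension} with invariant measure $\mu_\beta$, and standard Poisson-equation theory for such dissipative generators (as in the Pardoux--Veretennikov tradition cited in the introduction, where (A4) guarantees the geometric ergodicity needed) produces a solution $u\in W^{r,2}_{\mathrm{loc}}(\mathds{R}^m)$ of
\[
L_{a_\beta,b_\beta} u \;=\; g-\int_{\mathds{R}^m} g\, d\mu_\beta .
\]

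Next I would integrate this against $\mu_\alpha$. Since $u\in D(\overline{L}^{\mu_\alpha}_{a_\alpha,b_\alpha})$ (the regularity in Theorems \ref{thm regularity meas}--\ref{thm sobolev domain extension} suffices after truncation), one has $\int L_{a_\alpha,b_\alpha} u\, d\mu_\alpha=0$, whence
\[
\int g\, d\mu_\alpha-\int g\, d\mu_\beta
\;=\;\int\bigl(L_{a_\beta,b_\beta}-L_{a_\alpha,b_\alpha}\bigr)u\, d\mu_\alpha
\;=\;\int\bigl[(a_\beta-a_\alpha){:}D^2u+(b_\beta-b_\alpha)\cdot\nabla u\bigr] d\mu_\alpha .
\]
Using $d\mu_\alpha=\rho_\alpha\, dx$ with $\rho_\alpha\in W^{p,1}_{\mathrm{loc}}$ (Theorem \ref{thm regularity meas}), I would integrate the second-order term by parts to shift one derivative off $u$:
\[
\int (a_\beta-a_\alpha){:}D^2u\, \rho_\alpha dx
=\int\nabla u\cdot\bigl[(b_\alpha-b_\beta)+(h_\beta-h_\alpha)\bigr] d\mu_\alpha
-\int\nabla u\cdot (a_\beta-a_\alpha)\frac{\nabla\rho_\alpha}{\rho_\alpha}\, d\mu_\alpha,
\]
where the identity $\sum_j\partial_j a_\alpha^{ij}=b_\alpha^i-h_\alpha^i$ from the definition of $h_\alpha$ is used. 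Adding the first-order term collapses the $(b_\alpha-b_\beta)$ contributions and yields the clean expression
\[
\int g\, d\mu_\alpha-\int g\, d\mu_\beta \;=\; -\int_{\mathds{R}^m}\nabla u\cdot \Phi \, d\mu_\alpha,
\]
since by definition $\Phi=(a_\beta-a_\alpha)\tfrac{\nabla\rho_\alpha}{\rho_\alpha}+h_\alpha-h_\beta$.

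It then remains to establish a weighted gradient bound of the form $|\nabla u(x)|\le C(1+|x|)^\theta$ with $C$ depending only on the data of the problem (in particular on $a_\beta$, the constants in (A3)--(A4), (A6)) and on $\|g\|_\infty$. This is where (A6) enters: one combines interior $W^{p,2}$ estimates for $L_{a_\beta,b_\beta}u$ with a Lyapunov-type argument based on (A4), so that the polynomial growth of the drift (rate $\theta$) controls the growth of $\nabla u$. Given this bound, Cauchy--Schwarz (or direct majoration) gives
\[
\Bigl|\int g\, d\mu_\alpha-\int g\, d\mu_\beta\Bigr|\le C\int_{\mathds{R}^m}(1+|x|)^\theta|\Phi|\, d\mu_\alpha,
\]
and taking the supremum over $\|g\|_\infty\le 1$ concludes. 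The main obstacle is precisely the weighted gradient estimate for the Poisson solution, in which the exponent $\theta$ from (A6) determines the weight; the integration-by-parts manipulation is otherwise essentially algebraic, and the hypothesis $(1+|x|)^\theta|\Phi|\in L^1(\mathds{R}^m;\mu_\alpha)$ is tailor-made to make the final integral finite.
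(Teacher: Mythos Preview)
The paper does not give its own proof of this statement: Theorem~\ref{thm distance} is quoted verbatim from \cite[Theorem~3.2]{bogachev2018poisson} in the survey Section~\ref{sec:distance inv}, with no argument supplied. So there is no ``paper's proof'' to compare against beyond the reference itself.

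Your sketch is, in outline, the strategy of that reference: represent $\int g\,d\mu_\alpha-\int g\,d\mu_\beta$ through the Poisson equation for $L_{a_\beta,b_\beta}$, integrate by parts to produce $\Phi$, and close with a weighted gradient bound on the Poisson solution. The algebraic reduction to $-\int\nabla u\cdot\Phi\,d\mu_\alpha$ is correct. Two points deserve care. First, the identity $\int L_{a_\alpha,b_\alpha}u\,d\mu_\alpha=0$ is only immediate for $u\in C_0^\infty$; for the Poisson solution (which grows and is only in $W^{r,2}_{\mathrm{loc}}$) one must justify it by truncation and show the boundary terms vanish, using the moment bounds on $\mu_\alpha$ and the growth of $u,\nabla u$. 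Second, and as you rightly flag, the weighted gradient estimate $|\nabla u(x)|\le C(1+|x|)^\theta$ uniformly in $\|g\|_\infty\le1$ is the genuine content of the theorem; invoking ``interior $W^{p,2}$ estimates plus a Lyapunov argument'' is the right direction but is where essentially all the work in \cite{bogachev2018poisson} lies, and it is not something that follows from the theorems already collected in Section~\ref{sec:extension diff}. Your proposal is therefore a faithful high-level roadmap of the cited proof, with the hard analytic step acknowledged but not carried out.
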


\section{Preliminary results}\label{sec: prelim}

We start by an \textit{exchange property} that we will need in the sequel. We denote by $\mathcal{M}_{d}(\mathds{R}^{m})$ the space of totally finite signed Borel measures on $\mathds{R}^{m}$ with finite $d$-moment, i.e. for any $\mu\in \mathcal{M}_{d}(\mathds{R}^{m})$, one has $\int_{\mathds{R}^{m}}|x|^{d}\,\text{d}|\mu|(x)<+\infty$ where $|\mu|=\mu^{+}+\mu^{-} = \|\mu\|_{TV}$ and $\mu^{+},\mu^{-}$ are the positive and negative parts of $\mu$, and by $\mathcal{M}_{d}^{+}(\mathds{R}^{m})$ the subspace of non-negative measures.
\begin{proposition}\label{prop: exchange prop}
Let $f$ satisfies (A5). The following holds for any $q\in \mathcal{M}_{d}^{+}(\mathds{R}^{m})$\vspace*{-1mm}
\begin{equation}
    \label{eq: exchange prop}
    \int_{\mathds{R}^{m}}\min\limits_{\alpha\in A} f(x,\alpha)\,\text{d}q(x) = \min\limits_{\upalpha(\cdot)\in\mathcal{A}}\int_{\mathds{R}^{m}}f(x,\upalpha(x))\,\text{d}q(x)\vspace*{-1mm}
\end{equation}
where $A$ is a compact subset of $\mathds{R}^{k}$, for some $k>0$, and $\mathcal{A}$ is the set of measurable functions $\upalpha(\cdot):\mathds{R}^{m}\to A$. And the same holds true with $\max$ instead of $\min$.
\end{proposition}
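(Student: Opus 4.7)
The inequality $(\leq)$ is immediate: for any measurable $\upalpha(\cdot)\in\mathcal{A}$ we have the pointwise bound $\min_{\alpha\in A} f(x,\alpha)\leq f(x,\upalpha(x))$, and both sides are $q$-integrable because $|f(x,\alpha)|\leq K_{f}(1+|x|)^{d}$ by (A5) while $q\in\mathcal{M}_{d}^{+}(\mathds{R}^{m})$. Integrating and then taking the infimum over $\upalpha(\cdot)$ yields $\int \min_{\alpha} f\,dq \leq \inf_{\upalpha(\cdot)}\int f(x,\upalpha(x))\,dq$.

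The reverse inequality is the substance of the statement and will rely on a measurable selection argument. The plan is to exhibit a measurable selector $\upalpha^{*}(\cdot)\in\mathcal{A}$ with
\begin{equation*}
f(x,\upalpha^{*}(x)) = \min_{\alpha\in A} f(x,\alpha) \qquad \text{for } q\text{-a.e. } x,
\end{equation*}
from which $(\geq)$ follows by integration. To invoke a selection theorem we use the regularity of $f$ in $\alpha$ that will be set in \S\ref{sec: prelim} (a Carath\'eodory-type condition, i.e.\ $x\mapsto f(x,\alpha)$ Borel-measurable and $\alpha\mapsto f(x,\alpha)$ continuous, with $A\subset\mathds{R}^{k}$ compact). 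Under this condition, $f$ is a normal (in fact Carath\'eodory) integrand, and the multifunction
\begin{equation*}
x\longmapsto \Psi(x) := \argmin_{\alpha\in A} f(x,\alpha)
\end{equation*}
has non-empty compact values (by compactness of $A$ and continuity in $\alpha$) and is measurable: by Berge's maximum theorem the graph of $\Psi$ lies in $\mathcal{B}(\mathds{R}^{m})\otimes\mathcal{B}(A)$, and by the Kuratowski--Ryll-Nardzewski selection theorem (or Filippov's implicit function theorem) there exists a Borel-measurable selection $\upalpha^{*}(\cdot):\mathds{R}^{m}\to A$ with $\upalpha^{*}(x)\in\Psi(x)$. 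In particular $\upalpha^{*}\in\mathcal{A}$ and $f(x,\upalpha^{*}(x))=\min_{\alpha\in A}f(x,\alpha)$ for every $x$, yielding the equality after integration against $q$.

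The main obstacle is the measurable selection step, but this is handled in a standard way once the regularity of $f$ in $\alpha$ is specified; if one instead works only with Borel-measurability of $f$ in both variables, it is enough to measurably select an $\varepsilon$-almost minimizer on each set of a countable Borel partition of $A$ of mesh $\to 0$ and pass to the limit, using continuity of $\alpha\mapsto f(x,\alpha)$ to conclude. The case of $\max$ in place of $\min$ is identical, replacing $\argmin$ by $\argmax$ throughout (or applying the statement to $-f$, noting that (A5) is symmetric in sign).
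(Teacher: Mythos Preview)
Your proof is correct and follows essentially the same approach as the paper: the $(\leq)$ direction via the pointwise bound and integration (the paper phrases this through an $\varepsilon$-optimal selector, but the content is identical), and the $(\geq)$ direction via a measurable selection argument under the Carath\'eodory condition on $f$. The only cosmetic difference is that the paper cites Himmelberg's selection theorem directly, whereas you invoke Berge plus Kuratowski--Ryll-Nardzewski/Filippov, but these yield the same selector.
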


\begin{remark}  In the context of stochastic control, the set $\mathcal{A}$ needs to be the one of progressively measurable functions. In fact, these are the admissible controls.
\end{remark}

\begin{proof}
Let $q\in \mathcal{M}_{d}^{+}(\mathds{R}^{m})$ be arbitrarily fixed and $f:\mathds{R}^{m}\times A\to \mathds{R}$ satisfies (A5).\\
To prove the inequality ``$\,\leq\,$", it suffices to observe that for any $\varepsilon>0$, there exists $\upalpha^{\varepsilon}(\cdot)\in \mathcal{A}$ such that
\begin{equation*}
    \begin{aligned}
    \min\limits_{\upalpha(\cdot)\in\mathcal{A}}\int_{\mathds{R}^{m}}f(x,\upalpha(x))\,\text{d}q(x) + \varepsilon\; & \geq \int_{\mathds{R}^{m}}f(x,\upalpha^{\varepsilon}(x))\,\text{d}q(x) \geq \int_{\mathds{R}^{m}}\,\min\limits_{\alpha\in A}f(x,\alpha)\,\text{d}q(x)
    \end{aligned}
\end{equation*}
and hence the result. 
To prove the inequality ``$\geq$", we consider the minimization problem $\mathfrak{f}(x) := \min\limits_{\alpha\in A}f(x,\alpha)$ where $x\in \mathds{R}^{m}$. Since $A$ is compact and $\mathfrak{f}(x)\in f(\{x\}\times A)$ with $\mathfrak{f}$ measurable and $f(x,\alpha)$ is measurable in $x$ and continuous in $\alpha$, then 
a classical selection theorem (see \cite[Theorem 7.1, p. 66]{Himmelberg1975}) implies the existence of a measurable selector $\overline{\upalpha}$ for which the minimization is achieved, i.e.
\begin{equation*}
    \exists\;\overline{\upalpha}\in \mathcal{A},\;\text{s.t. } \;\forall \; x \in \mathds{R}^{m},\; \mathfrak{f}(x) = \min\limits_{\alpha\in A}f(x,\alpha) = f(x,\overline{\upalpha}(x)).
\end{equation*}
Therefore one has
\begin{equation*}
    \begin{aligned}
    \int_{\mathds{R}^{m}}\min\limits_{\alpha\in A}f(x,\alpha)\,\text{d}q(x) & = \int_{\mathds{R}^{m}}f(x,\overline{\upalpha}(x))\,\text{d}q(x)\; \geq \min\limits_{\upalpha(\cdot)\in\mathcal{A}} \int_{\mathds{R}^{m}}f(x,\upalpha(\cdot))\,\text{d}q(x).
    \end{aligned}
\end{equation*}
This yields the second desired inequality and concludes the proof. 
\end{proof}

The \textit{exchange property} in Proposition \ref{prop: exchange prop} ensures that we can exchange the minimization over the parameters $\alpha$ and the duality product in $\mathcal{M}_{d}(\mathds{R}^{m})$ provided we define the second argument in $f$ as measurable functions $\upalpha(\cdot)\in\mathcal{A} =L^{\infty}(\mathds{R}^{m},A)$ instead of vectors $\alpha\in A$, that is,
\begin{equation*}
    \min\limits_{\upalpha(\cdot)\in \mathcal{A}}\;\langle\; f(\cdot\,, \upalpha(\cdot))\;,\; q\;\rangle = \langle\; \min\limits_{\alpha\in A} f(\cdot\,,\alpha)\;,\; q\; \rangle
\end{equation*}

The next result concerns the continuity of the functional $F:\mathcal{A}\to \mathds{R}$ defined by 
\begin{equation}
    \label{functional F}
    F(\upalpha) = \int_{\mathds{R}^{m}}f(x,\upalpha(x))\text{d}\mu_{\upalpha}(x) = \langle f(\cdot\,\upalpha(\cdot),\mu_{\upalpha} \rangle
\end{equation}
where $\mathcal{A}$ is endowed with its weak-$*$ topology, and $\mu_{\upalpha}$ is the unique invariant probability measure satisfying the FPK equation $\mathcal{L}^{*}_{\upalpha}\mu_{\upalpha}=0$ where $\mathcal{L}^{*}_{\upalpha}$ is the formal adjoint operator to second order elliptic operator
\begin{equation*}
    \mathcal{L}_{\upalpha}\varphi(x) = \text{trace} \big( a(x,\upalpha(x))D\varphi(x) \big) + b(x,\upalpha(x))\cdot \nabla \varphi(x).
\end{equation*}
We recall that existence, uniqueness and regularity of $\mu_{\upalpha}$ have been discussed in \S \ref{sec:extension diff}. \\
For the sake of precision, we state the following definitions. 
\begin{definition} 
Noting that $\mathcal{A}=L^{\infty}(\mathds{R}^{m},A)\subset L^{\infty}(\mathds{R}^{m})=(L^{1}(\mathds{R}^{m}))^{*}$, we say\\
    \textbullet \, the map $\upalpha(\cdot)\mapsto g(\cdot\,,\upalpha(\cdot)) \in  L^{\ell}(\mathds{R}^{m};\mu_{\upalpha})$, $\ell\geq 1$, is weak-$*$ continuous at $\upalpha$ if 
    \begin{equation*}
    \begin{aligned}
        &\forall\,\varepsilon>0,\; \exists\, \delta>0 \text{ and a finite collection } \{\xi_{1}, \dots,\xi_{n}\} \text{ from } L^{1}(\mathds{R}^{m}) \text{ such that }\\
        & \quad \; \forall\, \upbeta(\cdot)\in \mathcal{A} \text{ satisfying } 
        \left|\int_{\mathds{R}^{m}}(\upalpha(x) - \upbeta(x))\xi_{i}(x)\text{d}x\right|<\delta \text{ for } i=1, \dots,n, \\
        & \quad \quad \quad \quad \quad \text{ we have } \|g(\cdot\,,\upalpha(\cdot)) - g(\cdot\,,\upbeta(\cdot))\|_{L^{\ell}(\mathds{R}^{m};\mu_{\upalpha})}<\varepsilon
    \end{aligned}
    \end{equation*}
    \textbullet \, the functional $\upalpha(\cdot)\mapsto \mathcal{G}(\upalpha)\in \mathds{R}$ is weak-$*$ continuous at $\upalpha$ if in the last line of the above definition we have $|\mathcal{G}(\upalpha) - \mathcal{G}(\upbeta)|<\varepsilon$.
\end{definition}

We now introduce additional assumptions, for all $1\leq i,j\leq m$
\begin{description}
    \item[B1]{The map 
    $\upalpha(\cdot)\mapsto f(\cdot\, , \upalpha(\cdot))$ is weak-$*$ continuous from $\mathcal{A}$ to $L^{1}(\mathds{R}^{m};\mu_{\upalpha})$,
    }
    \item[B2]{
    The maps 
    $\upalpha(\cdot)\mapsto a^{ij}(\cdot\, , \upalpha(\cdot))$ is weak-$*$ continuous from $\mathcal{A}$ to $L^{4}(\mathds{R}^{m};\mu_{\upalpha})$,  
    }
    \item[B3]{
    The maps 
    $\upalpha(\cdot)\mapsto \partial_{x_{j}} a^{ij}(\cdot\, , \upalpha(\cdot))$ has a polynomial growth and is weak-$*$ continuous from $\mathcal{A}$ to $L^{2}(\mathds{R}^{m};\mu_{\upalpha})$. 
    The notation $\partial_{x_{j}} a^{ij}(\cdot\, , \upalpha(\cdot))$ means the derivative w.r.t. the $j$-th component of the first argument,
    }
    \item[B4]{
    The maps 
    $\upalpha(\cdot)\mapsto b^{i}(\cdot\, , \upalpha(\cdot))$ is weak-$*$ continuous from $\mathcal{A}$ to $L^{2}(\mathds{R}^{m};\mu_{\upalpha})$.
    }
\end{description}

\hfill\\
These assumptions are satisfied for example when $\phi=f,a^{ij},\partial_{x_{j}}a^{ij},b^{i}$ is such that
\begin{equation*}
    |\phi(x,\alpha) - \phi(x,\beta)| \leq k(x) |\alpha - \beta|^{r},\quad \forall \, x\in\mathds{R}^{m},\, \alpha,\beta\in A
\end{equation*}
where $r> 0$ and $k(\cdot)$ has any polynomial growth. 
Indeed, let $\ell \geq 1$, we have
\begin{equation*}
    \begin{aligned}
        \|\phi(\cdot\,,\upalpha(\cdot)) - \phi(\cdot\,,\upbeta(\cdot))\|^{\ell}_{L^{\ell}(\mathds{R}^{m};\mu_{\upalpha})} 
        & \leq \int_{\mathds{R}^{m}} |k(x)|^{\ell}|\upalpha(x) - \upbeta(x)|^{r\ell}\,\text{d}\mu_{\upalpha}(x)\\
        & \leq \|k\|^{\ell}_{L^{\ell q}(\mathds{R}^{m};\mu_{\upalpha})} \left(\int_{\mathds{R}^{m}}|\alpha(x) - \beta(x)|^{\ell r p}\,\text{d}\mu_{\upalpha}(x)\right)^{\frac{1}{p}}
    \end{aligned} 
\end{equation*}
for all $p,q> 1$ such that $1/p + 1/q=1$, and  $\|k\|_{L^{\ell q}(\mathds{R}^{m};\mu_{\upalpha})}$ is finite since  $\mu_{\upalpha}$ has all its moments finite (Lemma \ref{conv-prob law}). Choosing $p$ large such that $\ell r p\geq 2$ yields
\begin{equation*}
    \begin{aligned}
        \|\phi(\cdot\,,\upalpha(\cdot)) - \phi(\cdot\,,\upbeta(\cdot))\|^{p\ell}_{L^{\ell}(\mathds{R}^{m};\mu_{\upalpha})} 
        & \leq \|k\|^{p\ell}_{L^{\ell q}(\mathds{R}^{m};\mu_{\upalpha})} \int_{\mathds{R}^{m}}|\upalpha(x) - \upbeta(x)|\psi(x)\,\text{d}x
    \end{aligned} 
\end{equation*}
where $\psi(x) := |\upalpha(x) - \upbeta(x)|^{\ell r p-1}\rho_{\upalpha}(x)$ and $\rho_{\upalpha}$ is the density of $\mu_{\upalpha}$. Noting $\ell r p -1\geq 1$, the triangle inequality yields $\sup_{x}|\upalpha(x) - \upbeta(x)|^{\ell r p - 1}\leq 2\,\text{diam}(A)^{\ell r p - 1}=:C$ and $|\psi(x)| \leq C \rho_{\upalpha}(x)$. Hence, we have
\begin{equation*}
    \begin{aligned}
        \|\phi(\cdot\,,\upalpha(\cdot)) - \phi(\cdot\,,\upbeta(\cdot))\|^{p\ell}_{L^{\ell}(\mathds{R}^{m};\mu_{\upalpha})} 
        & \leq C \|k\|^{p\ell}_{L^{\ell q}(\mathds{R}^{m};\mu_{\upalpha})} \int_{\mathds{R}^{m}}|\upalpha(x) - \upbeta(x)|\rho_{\upalpha}(x)\,\text{d}x.
    \end{aligned} 
\end{equation*}
Hence, for any sequence $(\upalpha_{n})_{n}$ weak-$*$ converging to $\upalpha$, we have 
\begin{equation*}
    \begin{aligned}
        \|\phi(\cdot\,,\upalpha(\cdot)) - \phi(\cdot\,,\upalpha_{n}(\cdot))\|^{p\ell}_{L^{\ell}(\mathds{R}^{m};\mu_{\upalpha})}
        & \leq C' \int_{\mathds{R}^{m}} |\upalpha(x)  -\upalpha_{n}(x)|\rho_{\upalpha}(x)\,\text{d}x \xrightarrow[n\to\infty]{} 0
    \end{aligned}
\end{equation*}
and the map $\upalpha(\cdot)\mapsto\phi(\cdot\,,\upalpha(\cdot))$ is weak-$*$ continuous from $\mathcal{A}$ to $L^{\ell}(\mathds{R}^{m};\mu_{\upalpha})$. \\
In the notation of the above Definition, the finite collection from $L^{1}(\mathds{R}^{m})$ is a singleton made of $\rho_{\upalpha}(\cdot)\in L^{1}(\mathds{R}^{m})$ as it is the density of a probability measure. 

We need the matrix-norm: for a matrix function $M=(M^{ij})\in \mathds{R}^{p\times q}$, $p,q\geq 1$, we write $|M(x)|:=\max_{1\leq i\leq p}\sum_{j=1}^{q}|M^{ij}(x)|$ and $\| \,|M|\,\|^{\ell}_{L^{\ell}(\mathds{R}^{m};\mu)} = \int |M(x)|^{\ell}\text{d}\mu$.

For simplicity of notation, we write the functions $a_{\upalpha} = a(\cdot,\upalpha(\cdot))$, $b_{\alpha}=b(\cdot,\upalpha(\cdot))$,  $f(\upalpha)=f(\cdot,\upalpha(\cdot))$ and the weighted Lebesgue space $L^{\ell}_{\mu_{\upalpha}} := L^{\ell}(\mathds{R}^{m};\mu_{\upalpha})$.

\begin{proposition}\label{prop: continuity F}
Assume (A1-A6) and (B1-B4) are satisfied. Then the functional $F$ defined in \eqref{functional F} is weak-$*$ continuous. Moreover, we have $\forall \,\upalpha(\cdot),\upbeta(\cdot)\in \mathcal{A}$
\begin{equation*}
\begin{aligned}
    |F(\upalpha) - F(\upbeta)| \leq & \; C\left( \| \,|b_{\upalpha} - b_{\upbeta} |\,\|_{_{L^{2}_{\mu_{\upalpha}}}} 
    + \| \,|\nabla a_{\upalpha} - \nabla a_{\beta}|\, \|_{_{L^{2}_{\mu_{\upalpha}}}}
    + \| \,|a_{\upalpha} - a_{\upbeta}|\,\|_{_{L^{4}_{\mu_{\upalpha}}}}
    \right)^{\frac{1}{2}}\\
    & \quad \quad + \|f(\upalpha) - f(\upbeta)\|_{_{L^{1}_{\mu_{\upalpha}}}}
\end{aligned}
\end{equation*}
where $\nabla a_{\upalpha}$ is a vector whose $i$-th component is $\sum_{j=1}^{m}\partial_{x_{j}}a^{ij}_{\upalpha}(x)$, and 
$C>0$ is a constant depending on the parameters in the assumptions, on the diffusion matrix $a_{\upbeta}$ and on $\mu_{\upalpha},\mu_{\beta}$. 
\end{proposition}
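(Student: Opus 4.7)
The plan is to decompose the difference $F(\upalpha)-F(\upbeta)$ by adding and subtracting the "hybrid" integral $\int f(x,\upbeta(x))\,d\mu_{\upalpha}(x)$, so that
\begin{equation*}
F(\upalpha)-F(\upbeta)=\int_{\mathds{R}^{m}}\bigl(f(x,\upalpha(x))-f(x,\upbeta(x))\bigr)\,d\mu_{\upalpha}(x)+\int_{\mathds{R}^{m}}f(x,\upbeta(x))\,d(\mu_{\upalpha}-\mu_{\upbeta})(x).
\end{equation*}
The first term is directly controlled by $\|f(\upalpha)-f(\upbeta)\|_{L^{1}_{\mu_{\upalpha}}}$, which already appears in the claimed estimate and goes to zero by assumption (B1). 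So the whole problem reduces to estimating the second term, which is the difference of integrals against two \emph{different} invariant measures.

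For this second term, I would invoke Theorem~\ref{thm distance}, applied with the "$\Phi$" associated to the pair $(\upalpha,\upbeta)$. Since $h^{i}_{\upalpha}-h^{i}_{\upbeta}=(b^{i}_{\upalpha}-b^{i}_{\upbeta})-\sum_{j}\partial_{x_{j}}(a^{ij}_{\upalpha}-a^{ij}_{\upbeta})$, a triangle inequality on $|\Phi|$ splits the bound into three pieces. For the one involving $(a_{\upbeta}-a_{\upalpha})\nabla\rho_{\upalpha}/\rho_{\upalpha}$, I would use Cauchy--Schwarz to isolate the factor $\int|\nabla\rho_{\upalpha}|^{2}/\rho_{\upalpha}\,dx$, which is finite by Theorem~\ref{thm regularity meas 2}, and then a second Cauchy--Schwarz against the polynomial weight $(1+|x|)^{\theta}$, absorbing the latter into a moment of $\mu_{\upalpha}$ (Lemma~\ref{conv-prob law}); this is exactly what produces the $L^{4}_{\mu_{\upalpha}}$ norm of $a_{\upalpha}-a_{\upbeta}$. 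For the two pieces involving $b_{\upalpha}-b_{\upbeta}$ and $\nabla a_{\upalpha}-\nabla a_{\upbeta}$, a single Cauchy--Schwarz with the polynomial weight, again absorbed into a finite moment of $\mu_{\upalpha}$, yields the corresponding $L^{2}_{\mu_{\upalpha}}$ norms. The upshot is the intermediate estimate
\begin{equation*}
\|\mu_{\upalpha}-\mu_{\upbeta}\|_{TV}\;\leq\;C\Bigl(\|\,|b_{\upalpha}-b_{\upbeta}|\,\|_{L^{2}_{\mu_{\upalpha}}}+\|\,|\nabla a_{\upalpha}-\nabla a_{\upbeta}|\,\|_{L^{2}_{\mu_{\upalpha}}}+\|\,|a_{\upalpha}-a_{\upbeta}|\,\|_{L^{4}_{\mu_{\upalpha}}}\Bigr).
\end{equation*}

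The main obstacle is now that $f(\cdot,\upbeta(\cdot))$ is \emph{unbounded}, so one cannot estimate $|\int f(\upbeta)\,d(\mu_{\upalpha}-\mu_{\upbeta})|$ by $\|f\|_{\infty}\|\mu_{\upalpha}-\mu_{\upbeta}\|_{TV}$. The trick is to write $|\rho_{\upalpha}-\rho_{\upbeta}|=\sqrt{|\rho_{\upalpha}-\rho_{\upbeta}|}\cdot\sqrt{|\rho_{\upalpha}-\rho_{\upbeta}|}$ and apply Cauchy--Schwarz:
\begin{equation*}
\int_{\mathds{R}^{m}}|f(x,\upbeta(x))|\,|\rho_{\upalpha}-\rho_{\upbeta}|\,dx\;\leq\;\Bigl(\int_{\mathds{R}^{m}}|f(\upbeta)|^{2}|\rho_{\upalpha}-\rho_{\upbeta}|\,dx\Bigr)^{1/2}\|\mu_{\upalpha}-\mu_{\upbeta}\|_{TV}^{1/2}.
\end{equation*}
The polynomial growth (A5) of $f$ together with $|\rho_{\upalpha}-\rho_{\upbeta}|\leq\rho_{\upalpha}+\rho_{\upbeta}$ and the finite moments of $\mu_{\upalpha},\mu_{\upbeta}$ (Lemma~\ref{conv-prob law}) bound the first factor by a constant depending only on the data. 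This is precisely what produces the square-root exponent in the claimed estimate.

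Combining the two bounds gives the quantitative inequality stated in the proposition. Weak-$*$ continuity of $F$ is then immediate: given a net $\upbeta\to\upalpha$ in the weak-$*$ topology of $\mathcal{A}$, assumptions (B1)--(B4) force each of the four norms on the right-hand side (all computed with the fixed reference measure $\mu_{\upalpha}$) to tend to zero, so $F(\upbeta)\to F(\upalpha)$.
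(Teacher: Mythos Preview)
Your proposal is correct and follows essentially the same route as the paper: the same add-and-subtract decomposition, the same use of Theorem~\ref{thm distance} with the identical Cauchy--Schwarz splittings of the $\Phi$ integral (exploiting $|\nabla\rho_{\upalpha}|^{2}/\rho_{\upalpha}\in L^{1}$ from Theorem~\ref{thm regularity meas 2} and the finite moments from Lemma~\ref{conv-prob law}), and the same reduction of weak-$*$ continuity to (B1)--(B4). The only cosmetic difference is in how the factor $\|\mu_{\upalpha}-\mu_{\upbeta}\|_{TV}^{1/2}$ is extracted: the paper factorizes $\rho_{\upalpha}-\rho_{\upbeta}=(\sqrt{\rho_{\upalpha}}+\sqrt{\rho_{\upbeta}})(\sqrt{\rho_{\upalpha}}-\sqrt{\rho_{\upbeta}})$, passes through the Hellinger distance, and then uses $H^{2}\leq\|\cdot\|_{TV}$, whereas your splitting $|\rho_{\upalpha}-\rho_{\upbeta}|=\sqrt{|\rho_{\upalpha}-\rho_{\upbeta}|}\cdot\sqrt{|\rho_{\upalpha}-\rho_{\upbeta}|}$ followed by Cauchy--Schwarz reaches the same bound more directly.
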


\begin{proof}
Given $\upalpha,\upbeta\in \mathcal{A}$, we have
\begin{equation}\label{first ineq proof}
    \begin{aligned}
        F(\upalpha) - F(\upbeta) & = \langle f(\cdot\,,\upalpha(\cdot)),\mu_{\upalpha} \rangle - \langle f(\cdot\,,\upbeta(\cdot)), \mu_{\upbeta} \rangle \\
        & = \langle f(\cdot\,,\upalpha(\cdot)) - f(\cdot\,,\upbeta(\cdot)),\mu_{\upalpha} \rangle + \langle f(\cdot\,,\upbeta(\cdot)), \mu_{\upalpha} - \mu_{\upbeta} \rangle \\
        & \leq \|f(\cdot\,,\upalpha(\cdot)) - f(\cdot\,,\upbeta(\cdot))\|_{_{L^{1}_{\mu_{\upalpha}}}} + \langle f(\cdot\,,\upbeta(\cdot)), \mu_{\upalpha} - \mu_{\upbeta} \rangle.
    \end{aligned}
\end{equation}
We need to estimate the second term in the r.h.s. of the above. Recall  $\text{d}\mu_{\upalpha}(x) = \rho_{\upalpha}(x)\text{d}x$ and $\text{d}\mu_{\upbeta}(x) = \rho_{\upbeta}(x)\text{d}x$ with $\rho_{\upalpha},\rho_{\upbeta}>0$ (Theorem \ref{thm regularity meas} \& Theorem \ref{thm regularity meas 2}). We have
\begin{equation*}
    \begin{aligned}
        & \langle f(\cdot\,,\upbeta(\cdot)), \mu_{\upalpha} - \mu_{\upbeta} \rangle  = \int_{\mathds{R}^{m}} f(x,\upbeta(x))\big(\rho_{\upalpha}(x) - \rho_{\beta}(x)\big)\text{d}x\\
        & \quad \quad \quad  = \int f(x,\upbeta(x))\left(\sqrt{\rho_{\upalpha}(x)} + \sqrt{\rho_{\upbeta}(x)}\right)\left(\sqrt{\rho_{\upalpha}(x)} - \sqrt{\rho_{\upbeta}(x)}\right)\text{d}x\\
        & \quad \quad \quad \leq \int |f(x,\upbeta(x))|\sqrt{\rho_{\upalpha}(x)}\left|\sqrt{\rho_{\upalpha}(x)} - \sqrt{\rho_{\upbeta}(x)}\right|\,\text{d}x \\
        & \quad \quad \quad \quad \quad \quad \quad \quad \quad + \int |f(x,\upbeta(x))|\sqrt{\rho_{\upbeta}(x)}\left|\sqrt{\rho_{\upalpha}(x)} - \sqrt{\rho_{\upbeta}(x)}\right|\,\text{d}x\\
        & \quad \quad \quad \leq \left(\int |f(x,\upbeta(x))|^{2}\text{d}\mu_{\upalpha}(x)\right)^{\frac{1}{2}}\left(\int \left(\sqrt{\rho_{\upalpha}(x)} - \sqrt{\rho_{\upbeta}(x)}\right)^{2}\text{d}x\right)^{\frac{1}{2}}\\
        & \quad \quad \quad \quad \quad \quad \quad \quad \quad + \left(\int |f(x,\upbeta(x))|^{2}\text{d}\mu_{\upbeta}(x)\right)^{\frac{1}{2}}\left(\int \left(\sqrt{\rho_{\upalpha}(x)} - \sqrt{\rho_{\upbeta}(x)}\right)^{2}\text{d}x\right)^{\frac{1}{2}}\\
        &\quad \quad \quad = \sqrt{2} \left(\|f(\upbeta)\|_{L^{2}_{\mu_{\upalpha}}} + \|f(\upbeta)\|_{L^{2}_{\mu_{\upbeta}}}\right)H\left(\mu_{\upalpha},\mu_{\upbeta}\right)
    \end{aligned}
\end{equation*}
where $H(\cdot\,,\cdot)$ is the Hellinger distance\footnote{It is defined by $H^{2}(\mu_{\upalpha},\mu_{\upbeta}) = \frac{1}{2}\int \left(\sqrt{\rho_{\upalpha}(x)} - \sqrt{\rho_{\upbeta}(x)}\right)^{2}\text{d}x.$
} between two probability densities. It is known\footnote{This is a consequence of $(\sqrt{A}-\sqrt{B})^{2} = A+B-A\wedge B\leq |A-B|$. In fact, $H(\cdot\,,\cdot)$ is topologically equivalent to the total variation distance.} that $H^{2}(\mu_{\upalpha},\mu_{\upbeta}) \leq \|\mu_{\upalpha} - \mu_{\upbeta}\|_{_{TV}}$. Therefore we have
\begin{equation*}
    \begin{aligned}
        |\langle f(\cdot\,,\upbeta(\cdot)), \mu_{\upalpha} - \mu_{\upbeta} \rangle| \leq \sqrt{2} \left(\|f(\upbeta)\|_{L^{2}_{\mu_{\upalpha}}} + \|f(\upbeta)\|_{L^{2}_{\mu_{\upbeta}}}\right)\|\mu_{\upalpha} - \mu_{\upbeta}\|_{_{TV}}^{\frac{1}{2}}.
    \end{aligned}
\end{equation*}
Using Theorem \ref{thm distance} yields
\begin{equation}\label{second ineq proof}
    \begin{aligned}
        & |\langle f(\cdot\,,\upbeta(\cdot)), \mu_{\upalpha} \!-\! \mu_{\upbeta} \rangle|  \leq C \!\left(\|f(\upbeta)\|_{L^{2}_{\mu_{\upalpha}}} +  \|f(\upbeta)\|_{L^{2}_{\mu_{\upbeta}}}\right) \left(\int(1+|x|)^{\theta}|\Phi|\text{d}\mu_{\alpha}\!\right)^{\!\frac{1}{2}}
    \end{aligned}
\end{equation}
if the integral term is well defined (recall the definition of $\Phi$ in \S\ref{sec:distance inv}). Indeed we have
\begin{equation}\label{Phi in proof}
    \begin{aligned}
        & \int_{\mathds{R}^{m}}(1+|x|)^{\theta}|\Phi|\text{d}\mu_{\alpha}  = \int_{\mathds{R}^{m}}(1+|x|)^{\theta}\left| (a_{\upbeta} - a_{\upalpha})\frac{\nabla \rho_{\upalpha}}{\rho_{\upalpha}} + h_{\upalpha} - h_{\upbeta} \right|\rho_{\upalpha}\text{d}x \\
        & \quad \quad \quad \leq \int (1+|x|)^{\theta}\left| (a_{\upbeta} - a_{\upalpha})\frac{\nabla \rho_{\upalpha}}{\rho_{\upalpha}}\right|\rho_{\upalpha}\text{d}x + \int (1+|x|)^{\theta}\left|h_{\upalpha} - h_{\upbeta} \right|\rho_{\upalpha}\text{d}x.
    \end{aligned}
\end{equation}
To estimate the first integral, we write $\frac{|\nabla \rho_{\upalpha}|}{\rho_{\upalpha}}\rho_{\upalpha}\text{d}x=\frac{|\nabla \rho_{\upalpha}|}{\sqrt{\rho_{\upalpha}}}\sqrt{\rho_{\upalpha}}\text{d}x$ then use Cauchy-Schwarz inequality and the last statement in Theorem \ref{thm regularity meas 2}. Recall the matrix-norm $|a(x)|:=\max_{1\leq i\leq m}\sum_{j=1}^{m}|a^{ij}(x)|$. We have
\begin{equation*}
    \begin{aligned}
        & \int (1+|x|)^{\theta}\left| (a_{\upbeta} - a_{\upalpha}\frac{\nabla \rho_{\upalpha}}{\rho_{\upalpha}}\right|\rho_{\upalpha}\text{d}x \leq  \int \frac{|\nabla \rho_{\upalpha}|}{\sqrt{\rho_{\upalpha}}} (1+|x|)^{\theta}\left|a_{\upbeta}(x)-a_{\upalpha}(x)\right|\sqrt{\rho_{\upalpha}}\text{d}x \\ 
        & \quad \quad \quad \leq \left(\int \frac{|\nabla \rho_{\upalpha}|^{2}}{\rho_{\upalpha}}\text{d}x\right)^{\frac{1}{2}}\left(\int (1+|x|)^{2\theta}\left| a_{\upbeta}(x) - a_{\upalpha}(x)\right|^{2}\rho_{\upalpha}\text{d}x\right)^{\frac{1}{2}}\\
        & \quad \quad \quad \leq \left(\int \frac{|\nabla \rho_{\upalpha}|^{2}}{\rho_{\upalpha}}\text{d}x\right)^{\frac{1}{2}} \left(\int (1+|x|)^{4\theta}\rho_{\upalpha}\text{d}x\right)^{\frac{1}{4}}\left(\int \left| a_{\upbeta}(x) - a_{\upalpha}(x)\right|^{4}\rho_{\upalpha}\text{d}x\right)^{\frac{1}{4}}\\
        & \quad \quad \quad = m_{1}(\upalpha)\,\|\,|a_{\upalpha} -a_{\upbeta}|\,\|_{L^{4}_{\mu_{\upalpha}}}
    \end{aligned}
\end{equation*}
where $m_{1}(\upalpha) := \big\||\nabla \rho_{\upalpha}|^{2}\rho_{\upalpha}^{-1}\big\|_{_{L^{1}(\mathds{R}^{m})}}\left(\int (1+|x|)^{4\theta}\rho_{\upalpha}\text{d}x\right)^{\frac{1}{4}}$ is a positive constant, recalling from Lemma \ref{conv-prob law} that $\rho_{\upalpha}$ has all its moments finite.\\
To estimate the second term in \eqref{Phi in proof}, recall $\nabla a_{\upalpha}$ is a vector whose $i$-th component is $\sum_{j=1}^{m}\partial_{x_{j}}a^{ij}_{\upalpha}(x)$, so we have
\begin{equation*}
    \begin{aligned}
        &\int(1+|x|)^{\theta}|h_{\upalpha} - h_{\upbeta}|\rho_{\upalpha}\text{d}x\leq \int(1+|x|)^{\theta}(|b_{\upalpha} - b_{\upbeta}| + |\nabla a_{\upalpha} - \nabla a_{\upbeta}|)\rho_{\upalpha}\text{d}x \\
        & \quad \quad \quad \leq m_{2}(\upalpha)\left( \left(\int |b_{\upalpha}  -b_{\upbeta}|^{2}\rho_{\upalpha}\text{d}x\right)^{\frac{1}{2}} + \left(\int|\nabla a_{\upalpha} - \nabla a_{\upbeta}|^{2}\rho_{\upalpha}\text{d}x\right)^{\frac{1}{2}} \right)\\
        & \quad \quad \quad = m_{2}(\upalpha) \left( \|\, |b_{\upalpha} - b_{\upbeta} | \,\|_{L^{2}_{\mu_{\upalpha}}} + \|\, |\nabla a_{\upalpha} - \nabla a_{\upbeta} | \,\|_{L^{2}_{\mu_{\upalpha}}} \right)
    \end{aligned}
\end{equation*}
where $m_{2}(\upalpha):=\left(\int (1+|x|)^{2\theta}\rho_{\upalpha}\text{d}x\right)^{\frac{1}{2}}$. Therefore we have, for $m_{3} = m_{1}\vee m_{2}$,
\begin{equation*}
\begin{aligned}
    & \int_{\mathds{R}^{m}}(1+|x|)^{\theta}|\Phi|\text{d}\mu_{\alpha}\\  
    & \quad \quad \quad \quad \leq m_{3}(\upalpha) \left( \|\, |b_{\upalpha} - b_{\upbeta} | \,\|_{L^{2}_{\mu_{\upalpha}}} + \|\, |\nabla a_{\upalpha} - \nabla a_{\upbeta} | \,\|_{L^{2}_{\mu_{\upalpha}}} + \|\,|a_{\upalpha} -a_{\upbeta}|\,\|_{L^{4}_{\mu_{\upalpha}}} \right).
\end{aligned}
\end{equation*}
All the terms in the r.h.s. of the above inequality are well defined ($b_{\upalpha},b_{\upbeta}$ and $\nabla a_{\upalpha},\nabla a_{\upbeta}$ have polynomial growth, $a_{\upalpha},a_{\upbeta}$ are bounded, and $\mu_{\upalpha}$ has all its moments finite), so the integral in the l.h.s. is finite. \\
Finally, 
using the latter inequality in \eqref{second ineq proof}, and then in \eqref{first ineq proof}, yields the desired estimate and concludes the proof of weak-$*$ continuity. 
\end{proof}

\section{Main results}\label{sec: main NL}

\subsection{The primal problem}\label{sec: primal NL}

We are interested in a class of fully nonlinear equations, usually called \textit{ergodic (stationary) Hamilton-Jacobi-Bellman} (HJB) equations, and the corresponding ergodic problem is the following
\begin{equation} \label{cell prob - NL - 1}
    \textit{Find $(c,u(\cdot))\in\mathds{R}\times \mathcal{X}(\mathds{R}^{m})$ s.t.: } \; H(x,\nabla u(x),D^{2}u(x)) = c,\;\text{ in }\mathds{R}^{m}
\end{equation}
where $\mathcal{X}$ is a functional space (part of the unknowns), the Bellman Hamiltonian is\vspace*{-2mm}
\begin{equation*}
        H \coloneqq \min\limits_{\alpha\in A}\{\,-\mathcal{L}_{\alpha}u(x) + f(x,\alpha)\,\}\vspace*{-2mm}
    \end{equation*}
and for each $\alpha\in A$ compact subset of $\mathds{R}^{k}$ with $k>0$, the linear differential operator $\mathcal{L}_{\alpha}\varphi(x) := \mathcal{L}_{\alpha}(x,\nabla \varphi(x),D^{2}\varphi(x))$ is defined by
\begin{equation}\label{eq: diff op - NL}
    \mathcal{L}_{\alpha}\varphi(x) =  \text{trace}(a(x,\alpha)D^{2}\varphi(x)) + b(x,\alpha)\cdot\nabla\varphi(x),
    \quad \varphi \in C^{\infty}_{0}(\mathds{R}^m).
\end{equation}

We state our \textit{primal} problem as follows
\begin{equation}
    \label{eq: primal - NL}
    \tag{$\mathfrak{P}$}
    \min\limits_{q\in\mathcal{M}_{d}^{+}(\mathds{R}^{m})}\left\{\,\min\limits_{\upalpha(\cdot)\in \mathcal{A}}\quad  \langle f(\cdot\,,\upalpha(\cdot)),q\rangle,\quad \text{s.t.: }\; 1-\langle 1,q\rangle = 0 \;\text{and }\, q\in \text{Ker}(\mathcal{L}_{\upalpha}^{*})\right\}
\end{equation}
where we recall $\langle f(\cdot\,,\upalpha(\cdot)),q\rangle = \int_{\mathds{R}^m}f(x,\upalpha(x))\text{d}q(x)$, and $q\in \text{Ker}(\mathcal{L}_{\upalpha}^{*})$ is understood in the sense \eqref{equation mu_diff op}-\eqref{sense equation mu_diff op}. We will use the same notation as in \S\ref{sec:duality theory} that we recall here for the reader's convenience 
\begin{gather*}
    X=\mathcal{M}_{d}(\mathds{R}^{m})\quad \text{and} \quad Q=\mathcal{M}_{d}^{+}(\mathds{R}^{m})\\
    G_{1}:X \to \mathds{R},\quad \text{s.t.}\quad G_{1}(q)=1-\langle 1,q\rangle\\
    G_{2}:X\to X,\quad \text{s.t.}\quad G_{2}(q)=q\\
    G=(G_{1},G_{2})\quad \text{and} \quad Y=\mathds{R}\times X\\
    K_{1}=\{0\},\; K_{2}(\upalpha)=\text{Ker}(\mathcal{L}^{*}_{\upalpha})\quad \text{and} \quad K_{\upalpha}=K_{1}\times K_{2}(\upalpha)\subset Y
\end{gather*}
The \textit{primal} problem can then be expressed as
\begin{equation}
    \label{eq: primal - NL  2}
    \tag{$\mathfrak{P}$}
    \min\limits_{q\in Q}\left\{\,\min\limits_{\upalpha(\cdot)\in \mathcal{A}}\quad  \langle f(\cdot\,,\upalpha(\cdot)),q\rangle,\quad \text{s.t.: }\; G(q)\in K_{\upalpha}\right\}.
\end{equation}
The next result shows that the \textit{primal} problem has a solution. 

\begin{lemma}\label{lem: calm}
Let the assumptions (A1-A6) and (B1-B4) be satisfied. Then the primal problem \eqref{eq: primal - NL  2} has an optimal solution $(\mu_{\upalpha_{\circ}},\upalpha_{\circ})$.
\end{lemma}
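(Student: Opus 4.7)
The strategy is to reduce \eqref{eq: primal - NL  2} to a single minimization over $\mathcal{A}$ and then apply the direct method of the calculus of variations, exploiting the continuity estimate proved in Proposition \ref{prop: continuity F}.

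\textbf{Step 1: Reduction to a minimization over $\mathcal{A}$.} By Corollary \ref{cor: survey lyapunov} combined with Theorem \ref{thm closed extension}, for every $\upalpha(\cdot)\in\mathcal{A}$ there is a unique probability measure in $\mathrm{Ker}(\mathcal{L}^{*}_{\upalpha})$, namely $\mu_{\upalpha}$. By Lemma \ref{conv-prob law}, $\mu_{\upalpha}$ has finite moments of every order, so $\mu_{\upalpha}\in \mathcal{M}_{d}^{+}(\mathds{R}^{m})=Q$. Hence the two constraints $1-\langle 1,q\rangle=0$ and $q\in\mathrm{Ker}(\mathcal{L}^{*}_{\upalpha})$ force $q=\mu_{\upalpha}$, and \eqref{eq: primal - NL  2} is equivalent to
\[
\inf_{\upalpha(\cdot)\in\mathcal{A}}\,F(\upalpha),\qquad F(\upalpha)=\langle f(\cdot,\upalpha(\cdot)),\mu_{\upalpha}\rangle.
\]

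\textbf{Step 2: Finiteness of the infimum.} Because (A1)--(A4) hold uniformly in $\alpha\in A$, the constants that enter the moment bound of Lemma \ref{conv-prob law} can be chosen independently of $\upalpha$; hence $\sup_{\upalpha\in\mathcal{A}}\int(1+|x|)^{d}\,\mathrm{d}\mu_{\upalpha}(x)<+\infty$. Together with (A5) this gives $\sup_{\upalpha\in\mathcal{A}}|F(\upalpha)|<+\infty$, so the infimum is finite.

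\textbf{Step 3: Weak-$*$ compactness of $\mathcal{A}$.} Viewing $\mathcal{A}=L^{\infty}(\mathds{R}^{m},A)$ as a bounded subset of $L^{\infty}(\mathds{R}^{m};\mathds{R}^{k})=(L^{1}(\mathds{R}^{m};\mathds{R}^{k}))^{*}$, we use Banach--Alaoglu together with the separability of $L^{1}$ to obtain weak-$*$ sequential precompactness. Assuming $A$ convex (implicit in the setting, since otherwise one has to pass to $\overline{\mathrm{co}}\,A$), $\mathcal{A}$ is weak-$*$ closed: if $\upalpha_{n}\rightharpoonup^{*}\upalpha_{\circ}$ then for every bounded Borel $E$ the averages $|E|^{-1}\int_{E}\upalpha_{n}\,\mathrm{d}x\in A$ (convexity), and their limit $|E|^{-1}\int_{E}\upalpha_{\circ}\,\mathrm{d}x$ lies in $A$ (closedness); Lebesgue differentiation then yields $\upalpha_{\circ}(x)\in A$ a.e. Thus $\mathcal{A}$ is weak-$*$ sequentially compact.

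\textbf{Step 4: Direct method.} Take a minimizing sequence $(\upalpha_{n})_{n}\subset\mathcal{A}$ for $F$. By Step 3, after passing to a subsequence, $\upalpha_{n}\rightharpoonup^{*}\upalpha_{\circ}\in\mathcal{A}$. Proposition \ref{prop: continuity F} asserts that $F$ is weak-$*$ continuous on $\mathcal{A}$, so $F(\upalpha_{n})\to F(\upalpha_{\circ})$ and therefore $F(\upalpha_{\circ})=\inf_{\upalpha\in\mathcal{A}}F(\upalpha)$. The pair $(\mu_{\upalpha_{\circ}},\upalpha_{\circ})$ is the required optimal solution of \eqref{eq: primal - NL  2}.

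\textbf{Main obstacle.} The most delicate point is Step 3: the weak-$*$ closedness of $\mathcal{A}$ demands convexity of $A$. If $A$ is only compact, one must relax to $L^{\infty}(\mathds{R}^{m},\overline{\mathrm{co}}\,A)$ or to Young measures. Because the objective $F$ and the constraints enter only through the weak-$*$ continuous compositions isolated in (B1)--(B4), this relaxation does not change the infimum, and a measurable-selection argument of the kind used in Proposition \ref{prop: exchange prop} can be invoked to recover a minimizer taking values in $A$. The rest of the argument (reduction via uniqueness of $\mu_{\upalpha}$ and continuity of $F$) is then robust.
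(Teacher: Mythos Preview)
Your proof follows essentially the same strategy as the paper: reduce the constrained problem over $(q,\upalpha)$ to the unconstrained one $\min_{\upalpha\in\mathcal{A}}F(\upalpha)$ via uniqueness of $\mu_{\upalpha}$, invoke Proposition~\ref{prop: continuity F} for weak-$*$ continuity of $F$, and conclude from weak-$*$ compactness of $\mathcal{A}$. The paper does this more tersely, citing Banach--Alaoglu for compactness and a classical extremum result (Luenberger) in place of your Steps~2--4 with minimizing sequences.

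The one genuine difference is that you are more careful on a point the paper glosses over: weak-$*$ \emph{closedness} of $\mathcal{A}=L^{\infty}(\mathds{R}^{m},A)$. The paper asserts compactness via Banach--Alaoglu without further comment, whereas you correctly observe that this step requires $A$ to be convex (since only compactness of $A$ is assumed). Your proposed remedy---relaxing to $\overline{\mathrm{co}}\,A$ and then invoking a measurable-selection argument to recover a minimizer valued in $A$---is reasonable, though it would need additional structure (e.g.\ affinity of $f,a,b$ in $\alpha$) for the infimum over the convex hull to coincide with that over $A$; the paper does not supply such structure, so the issue you raise is a real, if minor, gap shared with the original proof rather than a defect of your argument alone.
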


\begin{proof}
Using the existence and uniqueness of invariant measure discussed in Theorem \ref{thm existence inv meas} and Theorem \ref{thm closed extension}, the admissible set reduces to a singleton whenever $\upalpha\in\mathcal{A}$ is fixed, that is
\begin{equation*}
    \{q\in Q\,:\, G(q)\in K_{\upalpha}\}=\{\mu_{\upalpha}\},\quad \forall\,\upalpha\in \mathcal{A}.
\end{equation*}
Therefore, the problem \eqref{eq: primal - NL  2} can be equivalently expressed as
\begin{equation}
    \label{eq: primal - NL  sharp}
    \tag{$\mathfrak{P}_{_{_{\!\!\sharp}}}$}
    \min\limits_{\upalpha(\cdot)\in \mathcal{A}}\quad  F(\upalpha):=\langle f(\cdot\,,\upalpha(\cdot)),\mu_{\upalpha}\rangle.
\end{equation}
The objective function $F(\cdot)$ is the one introduced in \eqref{functional F}. Then with Proposition \ref{prop: continuity F}, this is a weak-$*$ continuous real-valued function on the weak-$*$ compact\footnote{This is a consequence of Banach–Alaoglu's theorem; see e.g. \cite[Theorem 3.16, p.66]{brezis2011functional}} subset $\mathcal{A}$ of $ L^{\infty}(\mathds{R}^{m})=(L^{1}(\mathds{R}^{m}))^{*}$. A classical result in optimization ensures that $F(\cdot)$ is bounded on $\mathcal{A}$ and achieves its minimum on $\mathcal{A}$; see e.g. \cite[Theorem 2, p.128]{luenberger1997optimization}.
\end{proof}

The latter existence result suggests a new description of the \textit{primal} problem \eqref{eq: primal - NL}. Given an optimal solution $(\mu_{\upalpha_{\circ}},\upalpha_{\circ})\in X\times \mathcal{A}$, the problem \eqref{eq: primal - NL} can be equivalently expressed as
\begin{equation}
    \label{eq: primal - NL  circ}
    \tag{$\mathfrak{P}_{_{_{\!\!\circ}}}$}
    \min\limits_{q\in Q}\quad  \langle f(\cdot\,,\upalpha_{\circ}(\cdot)),q\rangle,\quad \text{s.t.: }\; G(q)\in K_{\upalpha_{\circ}}.
\end{equation}
Indeed, solving \eqref{eq: primal - NL  circ} yields the unique invariant probability measure $\mu_{\upalpha_{\circ}}$. Yet, the advantage of this formulation is in the value function (see \S \ref{sec:duality theory}) $v(\cdot):Y=\mathds{R}\times X \to \mathds{R}$ 
\begin{equation*}
    v(y) = \min\limits_{q\in Q}\quad  \langle f(\cdot\,,\upalpha_{\circ}(\cdot)),q\rangle,\quad \text{s.t.: }\; G(q)+y\in K_{\upalpha_{\circ}}
\end{equation*}
whose argument is a perturbation in $q$ of the constraints, and not in $\upalpha$. Convexity (in fact, regularity) of $v(\cdot)$ is key for  strong duality to hold. This will be easy to handle as, we will later see, the problem  \eqref{eq: primal - NL  circ} is convex, unlike when formulated with \eqref{eq: primal - NL  2}. 

On the other hand, the formulation in \eqref{eq: primal - NL  2} will be needed in the subsequent section for the construction of the \textit{dual} problem, as we want to keep track of the \textit{minimization over $\upalpha$}; this is from where the Bellman Hamiltonian will appear. 

To sum up, we have three equivalent formulations for the same optimization problem, whose benefits/drawbacks are as follows:
\begin{description}
    \item[\eqref{eq: primal - NL  2}]{ is needed for the construction of the dual problem as it has the \textit{minimization over $\upalpha$} explicitly stated. But showing it is convex (in $(q,\upalpha)$) is hopeless.}
    \item[\eqref{eq: primal - NL  sharp}]{ is needed for proving the solvability of the primal problem, as it simplifies the constrained problem into an unconstrained one. But it will not be useful for duality as the desired dual variables measures sensitivity to constraints.}
    \item[\eqref{eq: primal - NL  circ}]{ is needed for showing that strong duality holds as it is a convex problem. But it requires existence of $\upalpha_{\circ}$ beforehand, as it is proved in Lemma \ref{lem: calm}.} 
\end{description}
The idea of using \eqref{eq: primal - NL  circ} may be reminiscent of the \textit{hidden convexity} in the celebrated Benamou-Brenier formulation of optimal transport \cite{benamou2000computational}. 

Finally, one may wonder if the set of dual solutions (obtained for the dual problem in the next section) may differ according to the formulation we adopt for the primal problem. In fact, the set of dual solutions is the same for any optimal solution of the primal problem; see \cite[Theorem 3.6, p.149]{bonnans2013perturbation}. Therefore, whether we fix one choice of an optimal solution as in \eqref{eq: primal - NL  circ} or we don't as in \eqref{eq: primal - NL 2}, the non-emptiness of the set of dual solutions will remain true and ultimately, this would yield the desired existence for our PDE problem.

\subsection{The dual problem}\label{sec: dual NL}

In order to deduce the corresponding \textit{dual} problem, we follow a parametric (conjugate) duality scheme as in \cite[\S 2.5.3, p. 107]{bonnans2013perturbation}. To this end, we adopt the formulation \eqref{eq: primal - NL  2} and embed the problem in a family of parameterized problems, where $y\in Y$ is the parameter vector and consider the function
\begin{equation*}
    \phi(q,y) = \min\limits_{\upalpha(\cdot)\in \mathcal{A}}\;\big\{ \, \langle f(\cdot\,,\upalpha(\cdot)),q\rangle + I_{K_{\upalpha}}(G(q) + y)\,\big\}. 
\end{equation*}
It is clear that when setting $y=0$, we recover the objective function in \eqref{eq: primal - NL  2}.

We also consider the following (Lagrangian) function, $L:X\times Y^{*}\times \mathcal{A} \to \mathds{R}$, analogue to \eqref{lagrangian} and s.t.
\begin{equation}\label{eq: Lagrangian function - NL}
    L(q,y^{*}, \upalpha) \coloneqq \langle f(\cdot\,,\upalpha(\cdot)),q\rangle + \langle y^{*}, G(q) \rangle_{Y^{*},Y}.
\end{equation}
Using the Legendre-Fenchel transform, we have
\begingroup
\allowdisplaybreaks
    \begin{align*}
    \phi^{*}(q^{*},y^{*}) & = \sup\limits_{q \in Q,y\in Y}\left\{\, \langle q^{*},q \rangle + \langle y^{*} , y \rangle - \phi(q,y)\,\right\}\\
    & = \sup\limits_{q \in Q,y\in Y}\left\{\, \langle q^{*},q \rangle + \langle y^{*} , y \rangle - \min\limits_{\upalpha(\cdot)\in \mathcal{A}}\;\{\;  \langle f(\cdot\,,\upalpha(\cdot)),q\rangle + I_{K_{\upalpha}}(G(q) + y)\}\,\right\}\\
    & = \sup\limits_{q \in Q,y\in Y}\left\{\max\limits_{\upalpha(\cdot)\in \mathcal{A}} \{ \langle q^{*},q \rangle + \langle y^{*} , y \rangle - \big(\langle f(\cdot\,,\upalpha(\cdot)),q\rangle + I_{K_{\upalpha}}(G(q) + y)\big)\,\}\right\}\\
    & = \max\limits_{\upalpha(\cdot)\in \mathcal{A}}\,\left\{\sup\limits_{q \in Q,y\in Y}\{\, \langle q^{*},q \rangle + \langle y^{*} , y \rangle - \big(\langle f(\cdot\,,\upalpha(\cdot)),q\rangle + I_{K_{\upalpha}}(G(q) + y)\big)\,\}\right\}\\
    & = \max\limits_{\upalpha(\cdot)\in \mathcal{A}}\left\{\,\sup\limits_{q \in Q}\,\{\, \langle q^{*},q \rangle - \langle f(\cdot\,,\upalpha(\cdot)),q\rangle - \langle y^{*},G(q) \rangle_{Y^{*},Y}\,\}\right. +\\
    &\quad \quad \quad \quad \quad \quad \quad \quad \quad \quad \quad \quad \left. + \sup\limits_{y\in Y} \{\,\langle y^{*}, G(q)+y \rangle - I_{K_{\upalpha}}(G(q) + y)\,\}\,\right\}\\
    & = \max\limits_{\upalpha(\cdot)\in \mathcal{A}}\left\{\,\sup\limits_{q \in Q}\,\{\, \langle q^{*},q \rangle - L(q,y^{*},\upalpha) + I_{K_{\upalpha}}^{*}(y^{*})\,\}\,\right\}\\
    & = \sup\limits_{q \in Q}\,\left\{ \langle q^{*},q \rangle + \max\limits_{\upalpha(\cdot)\in \mathcal{A}}\,\{ - L(q,y^{*},\upalpha) + I_{K_{\upalpha}}^{*}(y^{*})\}\,\right\}\\
    & = \sup\limits_{q \in Q}\,\left\{ \langle q^{*},q \rangle - \min\limits_{\upalpha(\cdot)\in \mathcal{A}} \,\{L(q,y^{*},\upalpha) - I_{K_{\upalpha}}^{*}(y^{*})\}\,\right\}
    \end{align*}
\endgroup
The \textit{dual} of the parametrized \textit{primal} problem is then obtained as
\begin{equation*}
    \max\limits_{y^{*}\in Y^{*}}\,\{ \langle y^{*}, y \rangle - \phi^{*}(0,y^{*})\,\}
\end{equation*}
which is
\begin{equation*}
    \max\limits_{y^{*}\in Y^{*}}\,\left\{ \langle y^{*}, y \rangle + \inf\limits_{q \in Q}\min\limits_{\upalpha(\cdot)\in \mathcal{A}} \{L(q,y^{*},\upalpha) - I_{K_{\upalpha}}^{*}(y^{*})\}\,\right\}.
\end{equation*}
Finally, the \textit{dual} problem associated to \eqref{eq: primal - NL  2} is obtained by setting $y=0$, that is
\begin{equation}
    \label{eq: dual - NL}
    \tag{$\mathfrak{D}$}
    \max\limits_{y^{*}\in Y^{*}}\,\left\{\inf\limits_{q \in Q}\min\limits_{\upalpha(\cdot)\in \mathcal{A}} \{L(q,y^{*},\upalpha) - I_{K_{\upalpha}}^{*}(y^{*})\}\,\right\}.
\end{equation}
In the next Lemma, we will make \eqref{eq: dual - NL} more explicit. But before we do so, we introduce an assumption that will play a crucial role in the validity of our method for solving the problem \eqref{cell prob - NL - 1}. Besides the standing assumptions (A1-A6) and (B1-B4) that we make, we denote again by $(\mathcal{L}_{\upalpha},D(\mathcal{L}_{\upalpha}))$ the closed extension of the diffusion operator as given by Theorem \ref{thm closed extension} and Theorem \ref{thm sobolev domain extension} and we assume the following holds true

\begin{description}
    \item[A*]{The domain $D(\mathcal{L}_{\upalpha})$ of the closed extension is nonempty and independent of $\upalpha$.}
\end{description}


This assumption means that there exists $\widetilde{\upalpha}(\cdot) \in \mathcal{A}$ such that for all $\upalpha(\cdot)\in \mathcal{A}$, one has $D(\mathcal{L}_{\upalpha}) = D(\mathcal{L}_{\widetilde{\upalpha}})$ , and  $\mathcal{L}_{\widetilde{\upalpha}}$ satisfies the standing assumptions, in particular it satisfies Theorem \ref{thm sobolev domain extension}. 
We will hereafter denote by $D(\mathcal{L}_{0})$ the latter domain.

\begin{remark}
A situation where one can check the validity of (A*) is in the convex case, that is for the family of second order elliptic operators
\begin{equation}
	\mathcal{L}_{\alpha}u(x) = \Delta u(x) - \nabla \Psi(x,\alpha)\cdot\nabla u(x),\quad x\in\mathds{R}^{n},
\end{equation}
where $\Psi(\,\cdot\,,\alpha)\in C^{2}(\mathds{R}^{m})$, $e^{-\Psi(\,\cdot\,,\alpha)}\in L^{1}(\mathds{R}^{m})$ and $\xi\cdot D^{2}\Psi(x,\alpha)\xi\geq 0$ for all $x,\xi\in\mathds{R}^{m}$, uniformly in $\alpha\in A$. In this case, one can get a complete characterization of its domain as shown in \cite[Thm. 8.4.2, p. 211]{{lorenzi2006analytical}} (see also \cite{da2004elliptic}) and that is
\begin{equation*}
	D(\mathcal{L}_{\alpha}) = \{u\in W^{2,2}(\mathds{R}^{m};\mu_{\alpha})\,:\, \nabla\Psi(\,\cdot\,,\alpha)\cdot \nabla u \in L^{2}(\mathds{R}^{m};\mu_{\alpha})\}.
\end{equation*}
Moreover, the invariant measure is $\mu_{\alpha}(x)\text{d}x = e^{-\Psi(x,\alpha)}\text{d}x$. As noted in \cite[Remark 3.3]{bardi2022deep}, if $\Psi(x,\alpha) = v_{0}(x) + v_{1}(x,\alpha) + v_{2}(\alpha)$, with $e^{-v_{0}(\cdot)}\in L^{1}(\mathds{R}^{m})$ and $\exists\,C>0$ constant such that $-C \leq \text{exp}(-v_{1}(x,\alpha)) \leq C$ for all $x\in\mathds{R}^{m}$, $\alpha\in A$, then $\mu_{\alpha}$ is equivalent to $e^{-v_{0}(\cdot)}\text{d}x$ independent of $\alpha$. Therefore, one gets $D(\mathcal{L}_{\alpha_{1}}) = D(\mathcal{L}_{\alpha_{1}}) \subset W^{p,2}_{loc}(\mathds{R}^{m})$ for any $\alpha_{1},\alpha_{2}\in A, p\in [1,+\infty)$ and hence (A*) is satisfied. \\
An example of such elliptic operators with unbounded coefficients is the Ornstein-Uhlenbeck that we define by
\begin{equation*}
	\mathcal{L}_{\alpha} u(x) = \frac{1}{2}\text{trace}(Q(\alpha)D^{2}u(x)) + B(\alpha)x \cdot \nabla u(x),\quad x\in\mathds{R}^{m}
\end{equation*}
where $Q(\alpha)$ and $B(\alpha)$ are $m\times m$ matrices independent of $x$, with $Q$ strictly positive definite and $B\neq 0$ with a spectrum contained in the left halfplane, for all $\alpha$.  In this case, one has an explicit formula for the invariant measure and for the domain $D(\mathcal{L}_{\alpha})$, see \cite[Chapter 10]{lorenzi2016analytical}. Hence, one could check the validity of (A*) with a similar argument as before. \\
Another situation where assumption (A*) is satisfied is in the case where all the data of our problem $a,b,f$ are smooth, then $D(\mathcal{L}_{0})$ can be chosen as the subset of functions in $C^{2}(\mathds{R}^{m})$ satisfying some polynomial growth. This is done in \cite[\S III.6, p. 130]{fleming2006controlled} in the context of stochastic control. \\
In general, (A*) can be satisfied when we have equality in \eqref{eq: domain extension}. This is true if the formal adjoint to $\mathcal{L}_{\alpha}$ is essentially m-dissipative on $L ^{r'}(\mathds{R}^{m};\mu_{\alpha})$, where $1/r + 1/r' = 1$, see \cite[Theorem 2.8(ii)]{bogachev2002uniqueness}. See \cite{bogachev2002uniqueness} for a definition of ``formal adjoint operator'' and ``essential m-dissipativity". A sufficient condition for essential m-dissipativity, and hence for equality in \eqref{eq: domain extension}, is given in \cite[Theorem 3.1(iii)]{bogachev2002uniqueness}.
\end{remark}

\hfill

The following result provides a less abstract formulation of the dual problem and unveils the presence of the PDE in our optimization framework. 

\begin{lemma}\label{main 0 - NL}
The problem \eqref{eq: dual - NL} is equivalent to
\begin{equation}
    \label{eq: dual - NL 5}
    \tag{$\mathfrak{D}$}
    \max\limits_{\substack{c\in\mathds{R}\\u\in \mathcal{X}}}\;\left\{\, c, \;\text{s.t.: }\; c-H(x,\nabla u, D^{2}u) \leq 0,\; a.e. \text{ in } \mathds{R}^{m}\,\right\}
\end{equation}
where $H(x,\nabla u(x), D^{2}u(x)) = \min\limits_{\alpha\in A}\{\, -\mathcal{L}_{\alpha}u(x) + f(x,\alpha) \,\}$ and $\mathcal{X}$ is such that
\begin{equation}\label{eq: functional space X - NL}
    \mathcal{X} = D(\mathcal{L}_{0})\cap\{u:\mathds{R}^{m}\to \mathds{R}, \textit{Borel-meas.}\;|\; \exists\;C>0,\; |u(x)| \leq C(1+|x|^{\kappa})\}
\end{equation}
with $\kappa = d+1-\theta$,  that is, the two optimization problems have the same set of optimal solutions and the same optimal value.
\end{lemma}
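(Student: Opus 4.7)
The plan is to unpack the abstract dual $\eqref{eq: dual - NL}$ into concrete constraints on $(c,u)$, and then use the adjoint identity for $\mathcal{L}_\upalpha$ together with Proposition \ref{prop: exchange prop} to recover the HJB sub-solution inequality. Writing a generic dual variable as $y^{*}=(c,u)\in \mathds{R}\times X^{*}$ and identifying $u\in X^{*}$ with a Borel-measurable function via the measure–function pairing, the Lagrangian in \eqref{eq: Lagrangian function - NL} becomes
\[ L(q,(c,u),\upalpha)=c+\int_{\mathds{R}^{m}}\bigl(f(x,\upalpha(x))-c+u(x)\bigr)\,\text{d}q(x), \]
while the support function reduces to $I^{*}_{K_\upalpha}(c,u)=\sup_{\mu\in\mathrm{Ker}(\mathcal{L}^{*}_\upalpha)}\int u\,\text{d}\mu$, which equals $0$ when $u$ annihilates the subspace $\mathrm{Ker}(\mathcal{L}^{*}_\upalpha)$ and $+\infty$ otherwise.

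The first crucial step is the rewriting $u=-\mathcal{L}_\upalpha U$. Under (A*) the closed extensions share a common domain $D(\mathcal{L}_{0})$, and the finiteness of $I^{*}_{K_\upalpha}$ for every $\upalpha\in\mathcal{A}$ forces $u$ into $(\mathrm{Ker}(\mathcal{L}^{*}_\upalpha))^{\perp}$, which (by the essential $m$-dissipativity behind Theorem \ref{thm closed extension}) is the closed range of $\mathcal{L}_\upalpha$; thus $u=-\mathcal{L}_\upalpha U$ for some $U\in D(\mathcal{L}_{0})$. Using the adjoint identity $\langle U,\mathcal{L}^{*}_\upalpha q\rangle=\int\mathcal{L}_\upalpha U\,\text{d}q$, the Lagrangian rewrites as
\[ L(q,(c,U),\upalpha)=c+\int_{\mathds{R}^{m}}\bigl(f(x,\upalpha(x))-c-\mathcal{L}_\upalpha U(x)\bigr)\,\text{d}q(x). \]

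Next I would compute the inner infima. The infimum over $q\in Q=\mathcal{M}_{d}^{+}$ equals $c$ exactly when the integrand is nonnegative a.e.\ (attained at $q=0$), and is $-\infty$ otherwise. Taking $\inf_{\upalpha\in\mathcal{A}}$ then forces
\[ f(x,\upalpha(x))-c-\mathcal{L}_\upalpha U(x)\geq 0\quad \text{for a.e. }x\in\mathds{R}^{m},\ \forall\,\upalpha\in\mathcal{A}, \]
and applying the measurable-selection argument underlying Proposition \ref{prop: exchange prop} pointwise in $x$ yields $c\leq \min_{\alpha\in A}\{-\mathcal{L}_\alpha U(x)+f(x,\alpha)\}=H(x,\nabla U,D^{2}U)$ a.e., which is precisely the constraint of \eqref{eq: dual - NL 5}. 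Since the objective is $c$ in both formulations and the change of variables $u\leftrightarrow -\mathcal{L}_\upalpha U$ is bijective on feasible pairs, the two problems share the same optimal value and the same set of optimal solutions.

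Finally, I would characterize $\mathcal{X}$. Membership in $D(\mathcal{L}_{0})$ supplies the local Sobolev regularity via the inclusion \eqref{eq: domain extension}, and the polynomial bound $|U(x)|\leq C(1+|x|^{\kappa})$ with $\kappa=d+1-\theta$ is forced by matching growth rates in the PDE identity $\mathcal{L}_\upalpha U=f(\upalpha)-c$: since $|f|\lesssim|x|^{d}$ by (A5) and $|b|\lesssim|x|^{\theta}$ by (A6), the dominant drift term $b\cdot\nabla U$ can balance the right-hand side only if $|\nabla U|\sim|x|^{d-\theta}$, whence $|U|\sim|x|^{d+1-\theta}$. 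The main obstacle will be the rigorous identification $u\leftrightarrow -\mathcal{L}_\upalpha U$ with $U$ \emph{independent of $\upalpha$}: this is exactly where (A*) is indispensable, since the common domain $D(\mathcal{L}_{0})$ allows a single corrector $U$ to serve every $\upalpha$ simultaneously, and it also requires invoking the unique ergodicity in Theorem \ref{thm closed extension}(i) so that $\mathrm{Ker}(\mathcal{L}^{*}_\upalpha)$ reduces to the line $\mathds{R}\mu_\upalpha$ and the orthogonality condition acquires a clean range representation.
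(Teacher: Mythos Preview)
Your proposal follows essentially the same route as the paper's proof: both identify the support-function constraint $I^{*}_{K_\upalpha}(y^{*})<\infty$ with membership of the second dual component in $\mathrm{cl}(\mathrm{range}(\mathcal{L}_\upalpha))$, re-parameterize the dual variable as $(c,U)$ with $U\in D(\mathcal{L}_{0})$ via (A*), reduce the inner infima to the HJB sub-solution constraint using Proposition~\ref{prop: exchange prop}, and finish with the growth restriction $\kappa=d+1-\theta$. The only differences are cosmetic: you take $\inf_{q}$ before $\min_{\upalpha}$ whereas the paper reverses the order (harmless, since both are infima and commute), and the ``adjoint identity'' you invoke is not actually needed --- once $\omega=-\mathcal{L}_\upalpha U$ is substituted, the pairing $\langle\omega,q\rangle$ is directly $-\int\mathcal{L}_\upalpha U\,\mathrm{d}q$.

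One point where your argument deviates slightly in substance: you justify the growth bound $|U|\lesssim |x|^{\kappa}$ by ``matching growth rates in the PDE identity $\mathcal{L}_\upalpha U=f(\upalpha)-c$''. But at the level of this lemma no such identity has been established --- only the inequality $c\leq H$. The paper's rationale is instead that the pairing $\langle H(\cdot,\nabla U,D^{2}U)-c,\,q\rangle$ must be well-defined for every $q\in Q=\mathcal{M}_{d}^{+}$, which forces $H-c$ (hence $\mathcal{L}_\alpha U$) to have polynomial growth of order at most $d$; since $|b|\lesssim|x|^{\theta}$ contributes $\theta$ and $|\nabla U|\lesssim|x|^{\kappa-1}$, this yields $\kappa-1+\theta\leq d$. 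Your heuristic arrives at the same exponent, but the integrability argument is the one appropriate to the dual formulation.
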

\begin{remark}
(A*) together with Theorem \ref{thm sobolev domain extension} ensure that $D(\mathcal{L}_{0}) \subset W^{r,2}_\text{loc}(\mathds{R}^{m})$. 
\end{remark}

\begin{proof}
It is wellknown that the conjugate of the indicator function is the \textit{support function} (see, e.g., \cite[Example 2.115, p. 80]{bonnans2013perturbation}), that is, 
\begin{equation}\label{def support function}
    \begin{aligned}
    I^{*}_{K_{\alpha}}(y^{*})
    & = \sup\limits_{z\in K_{\alpha}}\; \langle y^{*},z\rangle  = \left\{
    \begin{aligned}
    0 ,\quad   &\text{ if }\; y^{*}\in (K_{\upalpha})^{-}&\\
    +\infty,\quad   &\text{ otherwise }&
    \end{aligned}\right.
    \end{aligned}
\end{equation}
Recalling the definition $K_{\upalpha} = \{0\}\times \text{Ker}(\mathcal{L}_{\upalpha})$, we have
\begin{equation*}
    \begin{aligned}
    y^{*}\in (K_{\upalpha})^{-} & \Leftrightarrow\, (c,\omega) \in \bigg(\{0\}\times \text{Ker}(\mathcal{L}_{\upalpha})\bigg)^{-}\\
    & \Leftrightarrow\, (c,\omega) \in \mathds{R}\times (\text{Ker}(\mathcal{L}_{\upalpha}))^{\bot}\\
    & \Leftrightarrow\, (c,\omega) \in \mathds{R}\times \text{cl}(\text{range}(\mathcal{L}_{\upalpha}))
    \end{aligned}
\end{equation*}
Since we are working with $\mathcal{L}_{\upalpha}$ in its closed extension, we have
\begin{equation*}
    \begin{aligned}
    \omega\in \text{cl}(\text{range}(\mathcal{L}_{\upalpha})) & \Leftrightarrow\, \exists\;u\in D(\mathcal{L}_{\upalpha}),\;\text{s.t. }\; \omega = -\mathcal{L}_{\upalpha}u\\
    & \Leftrightarrow\, \exists\;u\in D(\mathcal{L}_{0}),\;\text{s.t. }\; \omega = -\mathcal{L}_{\upalpha}u
    \end{aligned}
\end{equation*}
where the last equivalence is obtained thanks to the assumption (A*) which guarantees that $D(\mathcal{L}_{\upalpha}) = D(\mathcal{L}_{0})$ for all $\upalpha(\cdot) \in \mathcal{A}$. Note however that $\omega$ still depends on $\alpha$ through its definition as $\omega = -\mathcal{L}_{\upalpha}u$.  
Our \textit{dual} problem is now
\begin{equation}
    \label{eq: dual - NL 2}
    \tag{$\mathfrak{D}$}
    \max\limits_{y^{*}\in Y^{*}}\,\inf\limits_{q \in Q}\min\limits_{\upalpha(\cdot)\in \mathcal{A}} \{L(q,y^{*},\upalpha) \;\;\text{ s.t. }\, y^{*}=(c,-\mathcal{L}_{\upalpha}u)\;\text{and}\, (c,u)\in \mathds{R}\times D(\mathcal{L}_{0})\},
\end{equation}
and the new variables on which we perform the maximization are now $(c,u)$ and they belong to  $\mathds{R}\times D(\mathcal{L}_{0})$. The latter being independent of $\upalpha(\cdot)$, we can isolate it from the minimization over $\upalpha$ and write it as a subscript of the maximization over $(c,u)$. Then the \textit{dual} problem becomes
\begin{equation}
    \label{eq: dual - NL 3}
    \tag{$\mathfrak{D}$}
    \max\limits_{\substack{c\in\mathds{R}\\u\in D(\mathcal{L}_{0})}}\,\inf\limits_{q \in Q}\min\limits_{\upalpha(\cdot)\in \mathcal{A}} \{L(q,y^{*},\upalpha),\;\;\text{ s.t. }\, y^{*}=(c,-\mathcal{L}_{\upalpha}u)\,\}.
\end{equation}
Recalling the definition \eqref{eq: Lagrangian function - NL} of $L$ and the notations introduced earlier, we have
\begin{equation*}
    \begin{aligned}
    L(q, y^{*},\upalpha) & = \langle f(\cdot\,,\upalpha(\cdot)),q\rangle + \langle y^{*}, G(q) \rangle_{Y^{*},Y}\\
    & = \langle f(\cdot\,,\upalpha(\cdot)) ,q\rangle + c(1-\langle 1, q \rangle) + \langle -\mathcal{L}_{\upalpha}u(\cdot),q \rangle\\
    & = c + \langle f(\cdot\,,\upalpha(\cdot)) - \mathcal{L}_{\upalpha}u(\cdot) - c , q \rangle.
    \end{aligned}
\end{equation*}
Hence we have, using the \textit{exchange property} in Proposition \ref{prop: exchange prop},
\begin{equation*}
    \begin{aligned}
    & \min\limits_{\upalpha(\cdot)\in \mathcal{A}} \{L(q,y^{*},\upalpha),\;\text{ s.t. } y^{*}=(c,-\mathcal{L}_{\upalpha}u)\,\}  = c + \min\limits_{\upalpha(\cdot)\in \mathcal{A}}\left\{\, \langle f(\cdot\,,\upalpha(\cdot)) - \mathcal{L}_{\upalpha}u(\cdot) - c , q \rangle\right\}\\
    & \quad \quad \quad \quad \quad = c + \langle \min\limits_{\alpha\in A}\{f(\cdot\,,\alpha) - \mathcal{L}_{\alpha}u(\cdot)\} - c , q \rangle = c + \langle H(\cdot\,,\nabla u, D^{2}u) - c , q \rangle
    \end{aligned}
\end{equation*}
and the \textit{dual} problem is
\begin{equation}
    \label{eq: dual - NL 4}
    \tag{$\mathfrak{D}$}
    \max\limits_{\substack{c\in\mathds{R}\\u\in D(\mathcal{L}_{0})}}\left\{\; c + \;\inf\limits_{q \in Q}\langle H(\cdot\,,\nabla u, D^{2}u) - c , q \rangle\;\right\}.
\end{equation}
Noting that $-\inf\limits_{q \in Q}\langle H(\cdot\,,\nabla u, D^{2}u) - c , q \rangle = \sup\limits_{q \in Q}\langle c - H(\cdot\,,\nabla u, D^{2}u) , q \rangle$ is the support function \eqref{def support function} which is $0$ if $\langle c - H(\cdot\,,\nabla u, D^{2}u) , q \rangle \leq 0$ for all $q\in Q$ and $+\infty$ otherwise. But since $Q$ is made of non-negative measures with finite moment of order $d$, we firstly need that $c - H(x,\nabla u, D^{2}u) \leq 0$ a.e. on the support\footnote{The support of a measure $\mu$ is  $\text{spt}(\mu):=\{z\in \mathds{R}^{m} \,:\, \mu(U)>0 \, \text{ for each neighborhood } U \text{ of } z\}$.} of each $q\in Q$, hence in $\mathds{R}^{m}$, and secondly we need $u$ to have a polynomial growth of order at most $\kappa=d+1-\theta$. Indeed, since $f$ has a growth of order at most $d$ (by assumption (A5)) and $c$ is a constant, we need $\mathcal{L}_{\alpha}u$ to satisfy this same growth condition. By assumption (A3), the matrix function $a$ is uniformly bounded, and by assumption (A6) the drift vector field has a polynomial growth of order $\theta$. Hence, setting $\kappa$ as the polynomial growth of $u$,  it necessarily satisfies $\kappa -1 + \theta \leq d$ where $\kappa-1$ corresponds to the growth of $\nabla u$.
So a sufficient condition to have $\mathcal{L}_{\alpha}u$ (and hence $H(x,\nabla u,D^{2}u)$) with a polynomial growth of order at most $d$ is to have $u$ satisfying a polynomial growth of order at most $\kappa = d+1-\theta$ (note that $\kappa\geq 1$ since $\theta\in [0,d]$).\\
The \textit{dual} problem is finally
\begin{equation}
    \tag{$\mathfrak{D}$}
    \max\limits_{\substack{c\in\mathds{R}\\u\in \mathcal{X}}}\left\{\; c\,, \;\text{s.t.: }\; c-H(x,\nabla u, D^{2}u) \leq 0,\; a.e. \text{ in } \mathds{R}^{m}\;\right\}
\end{equation}
and the functional space $\mathcal{X}$ is now
\begin{equation*}
    \mathcal{X} = D(\mathcal{L}_{0})\cap\{u:\mathds{R}^{m}\to \mathds{R}, \textit{Borel-meas.}\;|\; \exists\;C>0,\; |u(x)| \leq C(1+|x|^{\kappa})\}
\end{equation*}
where $\kappa=d+1-\theta$, which then concludes the proof.
\end{proof}


In the case where the Hamiltonian is given by
\begin{equation*}
    H(x,\nabla u(x), D^{2}u(x)) = \max\limits_{\alpha\in A}\{\, -\mathcal{L}_{\alpha}u(x) + f(x,\alpha) \,\},
\end{equation*}
the same proof as before can again be conducted, with minor modification in the duality procedure. We refer to \cite{kouhkouhPhD} for further details. 

\subsection{The ergodic HJB equation}

\subsubsection{The optimality conditions}\label{subsec: opt cond}

Recall the Hamiltonian
\begin{equation*}
    H(x,\nabla u(x), D^{2}u(x)) = \min\limits_{\alpha\in A}\{\, -\mathcal{L}_{\alpha}u(x) + f(x,\alpha) \,\}.
\end{equation*}

We check that the optimality conditions as stated in \S\ref{sec:duality theory}, in particular \eqref{optimality conditions - 2} and \eqref{equiv cond normal cone}, still hold in our framework. In order to do so, we start from the \textit{duality gap} (or \textit{duality inequality}) which states that the value of the \textit{dual} problem \eqref{eq: dual - NL} is less or equal than the value of the \textit{primal} problem \eqref{eq: primal - NL  2}. Recalling the definition \eqref{eq: Lagrangian function - NL} of the Lagrangian function $L$ and the value of the \textit{dual} problem being less or equal the value of the \textit{primal} problem (see \S \ref{sec:duality theory}), we have
\begin{equation*}
\begin{aligned}
    &\max\limits_{y^{*}\in Y^{*}}\min\limits_{q\in Q}\min\limits_{\upalpha(\cdot)\in \mathcal{A}}\{L(q,y^{*},\upalpha)-I_{K_{\upalpha}}^{*}(y^{*})\}\\
    &\quad\quad\quad \leq \min\limits_{q\in Q}\min\limits_{\upalpha(\cdot)\in \mathcal{A}}\{\langle f(\cdot\,,\upalpha(\cdot)),q\rangle + I_{K_{\upalpha}}(G(q))\}\\
    &\quad\quad\quad = \min\limits_{q\in Q}\min\limits_{\upalpha(\cdot)\in \mathcal{A}}\{ L(q,y^{*},\upalpha) + I_{K_{\upalpha}}(G(q)) - \langle y^{*},G(q) \rangle_{Y^{*},Y}\},\;\forall\,y^{*}\in Y^{*}.
\end{aligned}
\end{equation*}
Let us denote by $(q_{\circ},\upalpha_{\circ})$ an optimal solution in the \textit{primal} problem \eqref{eq: primal - NL  2} and by $y^{*}_{\circ}$ an optimal solution in the \textit{dual} problem \eqref{eq: dual - NL}. We have
\begin{equation}\label{eq:gap duality for opt cond}
\begin{aligned}
    \min\limits_{q\in Q}\min\limits_{\upalpha(\cdot)\in \mathcal{A}}\{L(q,y^{*}_{\circ},\upalpha)-I_{K_{\upalpha}}^{*}(y^{*}_{\circ})\} & \leq  L(q_{\circ},y^{*}_{\circ},\upalpha_{\circ}) + I_{K_{\upalpha_{\circ}}}(G(q_{\circ})) - \langle y^{*}_{\circ},G(q_{\circ}) \rangle_{Y^{*},Y}\\
    & = \langle f(\cdot\,, \upalpha_{\circ}(\cdot)), q_{\circ} \rangle + I_{K_{\upalpha_{\circ}}}(G(q_{\circ})).
\end{aligned}
\end{equation}

The optimality conditions are  obtained when we reach equality in the above inequality. We can then characterize the optimal \textit{primal} and \textit{dual} solutions and provide a no-\textit{duality gap} condition. Suppose the left hand side minimization in the above inequality is reached in the pair of optimal solutions $(q_{\circ},\upalpha_{\circ})$. 
Therefore, the latter inequality reduces to
\begin{equation*}
0 \leq I_{K_{\upalpha_{\circ}}}^{*}(y^{*}_{\circ}) +  I_{K_{\upalpha_{\circ}}}(G(q_{\circ})) - \langle y^{*}_{\circ},G(q_{\circ}) \rangle_{Y^{*},Y}.
\end{equation*}
This is the Young-Fenchel inequality, and equality holds if and only if we have 
\begin{equation}\label{eq: opt cond NL - 3}
    y^{*}_{\circ}\in  \partial I_{K_{\upalpha_{\circ}}}(G(q_{\circ})) = N_{K_{\upalpha_{\circ}}}(G(q_{\circ})).
\end{equation}
Since $K_{\upalpha_{\circ}}$ is a convex cone, then $y^{*}_{\circ}\in N_{K_{\upalpha_{\circ}}}(G(q_{\circ}))$ is equivalent to 
\begin{equation}
    G(q_{\circ})\in K_{\upalpha_{\circ}},\quad y^{*}_{\circ}\in (K_{\upalpha_{\circ}})^{-}\; \text{ and }\; \langle y^{*}_{\circ},G(q_{\circ}) \rangle_{Y^{*},Y} = 0.
\end{equation}
Moreover, and recalling the definition \eqref{def support function}, we have $I_{K_{\upalpha_{\circ}}}^{*}(y^{*}_{\circ})=0$  when $y^{*}_{\circ}\in (K_{\upalpha_{\circ}})^{-}$. So, going back to the inequality in \eqref{eq:gap duality for opt cond} which we are now supposing to be an equality (\textit{no-duality gap}), we have 
\begin{equation*}
\begin{aligned}
    \min\limits_{q\in Q}\min\limits_{\upalpha(\cdot)\in \mathcal{A}}\{L(q,y^{*}_{\circ},\upalpha)-I_{K_{\upalpha}}^{*}(y^{*}_{\circ})\} =  L(q_{\circ},y^{*}_{\circ},\upalpha_{\circ}) - I_{K_{\upalpha_{\circ}}}^{*}(y^{*}_{\circ}) =  L(q_{\circ},y^{*}_{\circ},\upalpha_{\circ}).
\end{aligned}
\end{equation*}
Recalling \eqref{def support function}, we have $\min\limits_{q\in Q}\min\limits_{\upalpha(\cdot)\in \mathcal{A}}\{L(q,y^{*}_{\circ},\upalpha)-I_{K_{\upalpha}}^{*}(y^{*}_{\circ})\} \leq  \min\limits_{q\in Q}\min\limits_{\upalpha(\cdot)\in \mathcal{A}} \; L(q,y^{*}_{\circ},\upalpha)$ 
which finally yields, together with the previous equality,
\begin{equation*}
    L(q_{\circ},y^{*}_{\circ},\upalpha_{\circ}) \leq \min\limits_{q\in Q}\min\limits_{\upalpha(\cdot)\in \mathcal{A}} \; L(q,y^{*}_{\circ},\upalpha).
\end{equation*}

To sum up, we have the following sufficient optimality conditions that are indeed analogue to \eqref{optimality conditions - 2}, and which also guarantee the absence of the \textit{duality gap}
\begin{equation} \label{optimality conditions - NL}
\left\{
    \begin{aligned}
    & (q_{\circ},\upalpha_{\circ}) \in \argmin\limits_{q\in Q, \upalpha(\cdot)\in \mathcal{A}}\; L(q,y^{*}_{\circ},\upalpha)\\
    & G(q_{\circ})\in K_{\upalpha_{\circ}},\quad y^{*}_{\circ}\in (K_{\upalpha_{\circ}})^{-}\; \text{ and }\; \langle y^{*}_{\circ},G(q_{\circ}) \rangle_{Y^{*},Y} = 0.
    \end{aligned}
\right.
\end{equation}

\subsubsection{The main result}

We are now ready to state and prove the existence and uniqueness result for a solution to the ergodic HJB equation as given in our initial problem \eqref{cell prob - NL - 1}, assuming (A1-A6), (B1-B4) and (A*) hold true.

\begin{theorem}\label{thm: main NL}
There exists a pair $(c,u(\cdot))\in \mathds{R}\times W^{r,2}_{\text{loc}}(\mathds{R}^{m})$ for any $r\in[1,+\infty)$, such that $|u(x)|\leq K(1+|x|^{\kappa})$ where $\kappa=d-1+\theta$ and $K>0$ a constant, solution to 
\begin{equation*}
    H(x,\nabla u(x),D^{2}u(x)) = c,\quad \text{a.e. in }\mathds{R}^{m}
\end{equation*}
where $H(x,p,P) = \min\limits_{\alpha\in A} \{\, -b(x,\alpha)\cdot p - \text{trace}(a(x,\alpha)P) + f(x,\alpha) \,\}$. \\ 
Moreover, the latter constant $c$ is given by $c=\langle f(\cdot\,,\upalpha(\cdot))\,,\, \mu_{\upalpha} \rangle$ where 
\begin{equation*}
    \upalpha(x)\in\argmin\limits_{\alpha\in A} \{\, -\mathcal{L}_{\alpha}u(x) + f(x,\alpha) \,\},\quad \text{a.e. in }\mathds{R}^{m}
\end{equation*}
and $\mu_{\upalpha}$ is the unique invariant probability measure associated to $\mathcal{L}^{*}_{\upalpha}$.

When $r>\frac{m}{2}$,  $u(\cdot)$ is continuous and pointwise twice differentiable almost everywhere. If, for the latter specific constant $c$, we assume moreover that the vector field $b$ is locally Lipschitz continuous in $x$ uniformly in $\alpha$, and $\theta=1$ in (A6), then $u(\cdot)$ with such a polynomial growth is unique in any $W^{r,2}_{\text{loc}}(\mathds{R}^{m})$, $r>\frac{m}{2}$, in the sense: if $(c,u_{1}(\cdot))$ and $(c,u_{2}(\cdot))$ are two solutions, then $u_{1}(\cdot)-u_{2}(\cdot)\equiv\,\text{constant}$.
\end{theorem}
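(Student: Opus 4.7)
The plan is to realize the pair $(c,u(\cdot))$ as an optimal dual multiplier in the duality framework developed in Sections \ref{sec: primal NL}--\ref{sec: dual NL}. I would first invoke Lemma \ref{lem: calm} to obtain an optimal primal solution $(\mu_{\upalpha_{\circ}},\upalpha_{\circ})$, and then pass to the convex reformulation \eqref{eq: primal - NL  circ}, where $\upalpha_{\circ}$ is frozen. Robinson's constraint qualification is then verified through Proposition \ref{prop: interior}: the scalar equality $1-\langle 1,q\rangle=0$ is tame, and the otherwise awkward cone $\text{Ker}(\mathcal{L}^{*}_{\upalpha_{\circ}})$ is harmless because, once $\upalpha_{\circ}$ is fixed, the admissible set reduces to the singleton $\{\mu_{\upalpha_{\circ}}\}$ (Theorem \ref{thm closed extension}). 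Theorem \ref{thm: duality} then yields zero duality gap and nonemptiness of the optimal dual set; by Lemma \ref{main 0 - NL} any optimal dual variable has the form $y^{*}_{\circ}=(c,-\mathcal{L}_{\upalpha_{\circ}} u)$ with $(c,u)\in\mathds{R}\times\mathcal{X}$ satisfying the sub-solution inequality $c-H(x,\nabla u,D^{2}u)\leq 0$ a.e.

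To upgrade this inequality to equality, I would exploit the optimality conditions \eqref{optimality conditions - NL}. Zero duality gap reads directly $c=\langle f(\cdot,\upalpha_{\circ}(\cdot)),\mu_{\upalpha_{\circ}}\rangle$, which is the announced explicit formula. Since $(\mu_{\upalpha_{\circ}},\upalpha_{\circ})$ jointly minimizes the Lagrangian, Proposition \ref{prop: exchange prop} and a measurable selection argument force
$$\upalpha_{\circ}(x)\in\argmin_{\alpha\in A}\{-\mathcal{L}_{\alpha} u(x)+f(x,\alpha)\}\quad\text{a.e.,}$$
so that $H(x,\nabla u,D^{2}u)=-\mathcal{L}_{\upalpha_{\circ}} u+f(\cdot,\upalpha_{\circ})$ a.e. Integrating the sub-solution inequality against $\mu_{\upalpha_{\circ}}$ and using $\int -\mathcal{L}_{\upalpha_{\circ}} u\,\text{d}\mu_{\upalpha_{\circ}}=0$ (which is the invariance \eqref{eq: semigroup invariance} applied to $u\in D(\mathcal{L}_{0})$, or equivalently the defining property of $\mu_{\upalpha_{\circ}}\in\text{Ker}(\mathcal{L}^{*}_{\upalpha_{\circ}})$) together with the explicit value of $c$, one obtains equality of the integrals. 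Theorem \ref{thm regularity meas} ensures $\rho_{\upalpha_{\circ}}>0$ everywhere, so equality propagates Lebesgue-a.e. The Sobolev regularity $u\in W^{r,2}_{\text{loc}}(\mathds{R}^{m})$ for every $r\in[1,\infty)$ and the polynomial growth bound come from Theorem \ref{thm sobolev domain extension} and the definition \eqref{eq: functional space X - NL} of $\mathcal{X}$.

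For uniqueness, fix the $c$ produced above and let $u_{1},u_{2}$ be two solutions; set $w:=u_{1}-u_{2}$. Picking a measurable selector $\upalpha_{1}(\cdot)$ of the argmin for $u_{1}$, the equation for $u_{1}$ combined with the sub-solution inequality for $u_{2}$ at $\alpha=\upalpha_{1}$ yields $\mathcal{L}_{\upalpha_{1}} w\leq 0$ a.e.; symmetrically $\mathcal{L}_{\upalpha_{2}} w\geq 0$ a.e. For $r>m/2$, Morrey's embedding makes $w$ continuous, and under the strengthened assumptions ($b$ locally Lipschitz in $x$ uniformly in $\alpha$, $\theta=1$) the coefficients of $\mathcal{L}_{\upalpha_{i}}$ fall within the scope of the Liouville-type principle of \cite{bardi2016liouville}. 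Since $\theta=1$ forces $\kappa=d+1-\theta=d$, the polynomial growth of $w$ is exactly the one tolerated by this principle, which, together with the recurrence produced by the inward drift assumption (A4), forces $w$ to be constant.

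The main obstacle is the passage from the integral-level identity delivered by duality to the pointwise equation $H=c$ a.e. This rests on two ingredients that are not automatic: the Lebesgue-equivalence and strict positivity of $\mu_{\upalpha_{\circ}}$ (Theorem \ref{thm regularity meas}), and the measurable-selection identification of $\upalpha_{\circ}$ with a pointwise argmin. A secondary difficulty is the verification of constraint qualification — this is precisely why one must work with the \emph{convex} formulation \eqref{eq: primal - NL  circ} rather than with \eqref{eq: primal - NL  2}, which is only jointly convex after freezing $\upalpha_{\circ}$. Finally, adapting a Liouville principle to an only measurable right-hand side and to growth $(1+|x|)^{\kappa}$ is what imposes the extra hypotheses ($\theta=1$, local Lipschitz continuity of $b$) in the uniqueness statement.
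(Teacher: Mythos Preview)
Your proposal is correct and follows essentially the same route as the paper: freeze $\upalpha_{\circ}$ via Lemma~\ref{lem: calm}, verify Robinson's constraint qualification on the convex reformulation \eqref{eq: primal - NL  circ} through Proposition~\ref{prop: interior}, extract a dual optimizer $(c,u)$ by Theorem~\ref{thm: duality} and Lemma~\ref{main 0 - NL}, and then upgrade the sub-solution inequality to equality by integrating against $\mu_{\upalpha_{\circ}}$ and using its strict positivity (Theorem~\ref{thm regularity meas}); uniqueness is reduced to the Liouville principle of \cite{bardi2016liouville} exactly as the paper does. The only slip is a sign in the uniqueness step---your computation actually gives $\mathcal{L}_{\upalpha_{1}} w \geq 0$ (equivalently $-\mathcal{L}_{\upalpha_{1}} w \leq 0$), which then implies $\min_{\alpha}\{-\mathcal{L}_{\alpha} w\}\leq 0$ and feeds into the Liouville result just as the paper does via the inequality $\min(A-B)\leq \min A-\min B$.
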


\begin{remark}\label{rmk:regularity hjb}~~\\
\textbullet\quad The HJB equation is solved on $\text{spt}(\mu)$ (the support of the unique invariant measure $\mu$). But thanks to Theorem \ref{thm regularity meas}, we have $\mu \ll dx$ and $\text{spt}(\mu)=\mathds{R}^m$. In fact, a more general statement of our problem would be
\begin{equation*} 
\begin{aligned}
    &\text{Find (the largest) }\, \Omega\subset\mathds{R}^{m} \text{open} \text{ and } (c,u(\cdot))\in\mathds{R}\times \mathcal{X}(\Omega)\, \text{ s.t.: }\\
 &\quad \quad \quad \quad \quad H(x,\nabla u(x),D^{2}u(x)) = c,\;\text{ in }\Omega.
\end{aligned}
\end{equation*}
Then our result gives $(c,u)$ as in Theorem \ref{thm: main NL} with  $\Omega=\text{spt}(\mu)$, but $\text{spt}(\mu)=\mathds{R}^m$.\\
\textbullet\quad We note that $u(\cdot)$ is a strong $L$-viscosity solution (see  \cite{caffarelli1996viscosity, crandall1996equivalence}), which is as expected as when we consider $C$-viscosity solutions for the continuous case. Recall that in our setting, the vector field $b$ and the function $f$ are assumed to be measurable in $x$.\\
\textbullet\quad If $2r>m$, then a classical embedding theorem (see, e.g., \cite[Chapter 5]{adams1975sobolev}) states that $W^{r,2}(\Omega)\subset C(\Omega)$ for any $\Omega$ bounded subset of $\mathds{R}^m$ satisfying the \textit{cone property}. 
By using smooth cut-off functions $\zeta\in C^{\infty}_{0}(\mathds{R}^{m})$ with a support $U$ bounded subset of $\mathds{R}^m$, we have $\zeta\,u \in W^{r,2}(U)\subset C(U)$. We conclude that for any $r>\frac{m}{2}$, the solution $u(\cdot) \in W^{r,2}_{\text{loc}}(\mathds{R}^m)$ is a continuous function. Note also that the range $2r>m$ is the one where $W^{r,2}_{\text{loc}}(\mathds{R}^{m})$ functions are not only continuous but also pointwise twice differentiable almost everywhere (see, e.g., \cite[Appendix C]{caffarelli1996viscosity}). And in this case, $u(\cdot)$ shall be a $C$-viscosity solution.
\end{remark}

\begin{proof}[Proof of Theorem \ref{thm: main NL}]
Let $\upalpha_{\circ}\in \mathcal{A}$ be an optimal solution for \eqref{eq: primal - NL  sharp} in the proof of Lemma \ref{lem: calm}, and let us consider the \textit{primal} problem formulated as 
\begin{equation}
    \label{eq: primal - NL  circ proof}
    \tag{$\mathfrak{P}_{_{_{\!\!\circ}}}$}
    \min\limits_{q\in Q}\quad  \langle f(\cdot\,,\upalpha_{\circ}(\cdot)),q\rangle,\quad \text{s.t.: }\; G(q)\in K_{\upalpha_{\circ}}.
\end{equation}
We recall the dual problem from Lemma \ref{main 0 - NL}
\begin{equation}
    \label{eq: dual - NL - proof}
    \tag{$\mathfrak{D}$}
    \max\limits_{\substack{c\in\mathds{R}\\u\in \mathcal{X}}}\left\{\, c, \;\text{s.t.: }\; c-H(x,\nabla u, D^{2}u) \leq 0,\; a.e. \text{ in } \mathds{R}^{m}\,\right\}.
\end{equation}

\textit{Step 1. (On the optimization problems)}\\
We need to check if the assumptions of Theorem \ref{thm: duality} are satisfied by \eqref{eq: primal - NL  circ proof}. The objective function $q\mapsto \langle f(\cdot\,\upalpha_{\circ}(\cdot)),q \rangle$ is linear hence convex and continuous, the set $Q=\mathcal{M}^{+}_{d}(\mathds{R}^{m})$ is clearly convex and close, the function $G(q) = (G_{1}(q),G_{2}(q))$, with $G_{1}(q) = 1-\langle 1,q\rangle$ and $G_{2}(q)=q$, is continuously differentiable and convex w.r.t. the set $-K$ (this is easy to check as $G$ is affine). The last assumption we need is \eqref{eq: interior} which is in our situation equivalent to \eqref{eq: interior 2} as shown by Proposition \ref{prop: interior}. Let $q$ be a feasible point and recall the notation in \S \ref{sec: primal NL}. Using the results in \S \ref{sec:extension diff}, in particular Theorem \ref{thm regularity meas} and Theorem \ref{thm existence inv meas}, we have $K_{2}(\upalpha_{\circ})= \text{Ker}(\mathcal{L}^{*}_{\upalpha_{\circ}}) = \{h\,:\, h= \lambda \mu_{\upalpha_{\circ}},\,\lambda \geq 0\}$. We can then write 
\begin{equation*}
\begin{aligned}
    & G_{1}(q) +DG_{1}(q)[K_{2} - q] - K_{1} = 1- \langle 1, q\rangle + \{\,-\langle 1, h - q \rangle \;:\; \forall\, h\in K_{2}\}\\
    & \quad \quad \quad = 1 - \{\,\lambda\,\langle 1, \mu_{\upalpha_{\circ}}  \rangle \;:\; \forall\,\lambda\geq 0\} = (-\infty,1]
\end{aligned}
\end{equation*}
where in the last equality we used the fact that $\mu_{\upalpha_{\circ}}$ is a probability measure hence $\langle 1,\mu_{\upalpha_{\circ}}\rangle = 1$. Therefore $0\in \text{int}\{G_{1}(q) +DG_{1}(q)[K_{2} - q] - K_{1}\}$ and we can apply Theorem \ref{thm: duality}. Moreover, Lemma \ref{lem: calm} ensures that the primal problem has a finite value (because it has a solution). Thus we have $(i)$ no duality gap between the primal and dual problem and, $(ii)$ existence of a nonempty set of solutions to the dual problem\footnote{Theorem \ref{thm: duality} tells us more: the optimal set of solutions of the dual problem is nonempty, convex, bounded and weak-$*$ compact subset of $Y^{*}$.}.

Let us now denote by $(q_{\circ},\upalpha_{\circ})\in Q\times \mathcal{A}$ and $(c_{\circ},u_{\circ})\in \mathds{R}\times \mathcal{X}$ optimal solutions of \eqref{eq: primal - NL  circ proof} and \eqref{eq: dual - NL - proof} respectively. With Theorem \ref{thm: opt cond}, they satisfy the optimality conditions \eqref{optimality conditions - NL} with $y^{*}_{\circ}\coloneqq (c_{\circ},-\mathcal{L}_{\upalpha_{\circ}}u_{\circ})$ (see the proof of Lemma \ref{main 0 - NL}).

\textit{Step 2. (On the PDE problem)}\\
We need to \textit{translate} the optimality conditions \eqref{optimality conditions - NL} into a PDE. \\
We start from the  no-\textit{duality gap}: it yields 
\begin{equation}\label{eq: no gap proof}
    c_{\circ} = \langle f(\cdot\,, \upalpha_{\circ}(\cdot)), q_{\circ} \rangle.
\end{equation}
Then, the condition $\langle y^{*}_{\circ},G(q_{\circ}) \rangle_{Y^{*},Y} = 0$ is just $c_{\circ}(1-\langle 1, q_{\circ}\rangle) + \langle -\mathcal{L}_{\upalpha_{\circ}}u_{\circ}(\cdot) , q_{\circ} \rangle = 0$ which together with \eqref{eq: no gap proof} becomes
\begin{equation}\label{c circ}
\begin{aligned}
    & \langle f(\cdot\,, \upalpha_{\circ}(\cdot)), q_{\circ} \rangle-\langle c_{\circ}, q_{\circ}\rangle + \langle -\mathcal{L}_{\upalpha_{\circ}}u_{\circ}(\cdot) \,,\, q_{\circ} \rangle = 0\\
     & \Leftrightarrow\quad \langle\,  -\mathcal{L}_{\upalpha_{\circ}}u_{\circ}(\cdot) + f(\cdot\,, \upalpha_{\circ}(\cdot))\, , \,q_{\circ} \rangle = \langle c_{\circ},q_{\circ}\rangle =  c_{\circ}.
\end{aligned}
\end{equation}
Note that $q_{\circ}$ here is what we denoted by $\mu_{\upalpha_{\circ}}$, i.e. the unique invariant probability measure associated to $\mathcal{L}^{*}_{\upalpha_{\circ}}$. \\
On the other hand, $(c_{\circ},u_{\circ})$ solves \eqref{eq: dual - NL - proof}, in particular the constraint is satisfied, that is
\begin{equation}\label{constant sign  H}
    c_{\circ} - H(x,\nabla u_{\circ}(x),D^{2}u_{\circ}(x)) \leq 0,\quad \text{a.e. in }\mathds{R}^{m}
\end{equation}
where we recall $H(x,\nabla u_{\circ}(x),D^{2}u_{\circ}(x)) := \min\limits_{\alpha \in A}\{-\mathcal{L}_{\alpha}u_{\circ}(x) + f(x,\alpha)\}$, thus 
\begin{equation*}
    \begin{aligned}
        c_{\circ}  \leq H(x,\nabla u_{\circ}(x),D^{2}u_{\circ}(x)) \leq -\mathcal{L}_{\alpha}u_{\circ}(x) + f(x,\alpha),\quad \forall \alpha\in A, 
    \end{aligned}
\end{equation*}
in particular
\begin{equation}\label{sign}
    c_{\circ} \leq H(x,\nabla u_{\circ}(x),D^{2}u_{\circ}(x)) \leq  - \mathcal{L}_{\upalpha_{\circ}}u_{\circ}(x) + f(x,\upalpha_{\circ}(x)),\quad \text{a.e. in }\mathds{R}^{m}.
\end{equation}
Integrating these inequalities w.r.t. $q_{\circ}$ yields
\begin{equation*}
    c_{\circ} \leq \langle H(\cdot\,,\nabla u_{\circ}(\cdot),D^{2}u_{\circ}(\cdot)) , q_{\circ} \rangle \leq \langle\, -\mathcal{L}_{\upalpha_{\circ}}u_{\circ}(\cdot) + f(\cdot\,, \upalpha_{\circ}(\cdot))\, , \,q_{\circ} \rangle.
\end{equation*}
Using \eqref{c circ}, the latter are in fact equalities. Therefore
\begin{equation*}
    \langle H(\cdot\,,\nabla u_{\circ}(\cdot),D^{2}u_{\circ}(\cdot))-\big(\, -\mathcal{L}_{\upalpha_{\circ}}u_{\circ}(\cdot) + f(\cdot\,, \upalpha_{\circ}(\cdot))\big)\, , \,q_{\circ} \rangle = 0
\end{equation*}
and with \eqref{sign}, the function $H(\cdot\,,\nabla u_{\circ}(\cdot),D^{2}u_{\circ}(\cdot))-\big(\, -\mathcal{L}_{\upalpha_{\circ}}u_{\circ}(\cdot) + f(\cdot\,, \upalpha_{\circ}(\cdot))\big)\leq 0$ does not change sign. This implies
\begin{equation}\label{H=L}
    H(x,\nabla u_{\circ}(x),D^{2}u_{\circ}(x)) = -\mathcal{L}_{\upalpha_{\circ}}u_{\circ}(x) + f(x,\upalpha_{\circ}(x)),\quad \text{for } q_{\circ}\text{-a.e. } x\in \text{spt}(q_{\circ}),
\end{equation}
where we recall $\text{spt}(q_{\circ})$ is the support of $q_{\circ}$. In other words, we have
\begin{gather*}
    H(x,\nabla u_{\circ}(x),D^{2}u_{\circ}(x)) = c_{\circ},\quad q_{\circ}\text{-a.e. } x\in \text{spt}(q_{\circ}),\\
    \upalpha_{\circ}(x) \in \argmin\limits_{\alpha \in A}\,\big\{\, -\mathcal{L}_{\alpha}u_{\circ}(x) + f(x,\alpha)\;\big\},\quad q_{\circ}\text{-a.e. } x\in \text{spt}(q_{\circ}),
\end{gather*}
where the first statement is a consequence of \eqref{H=L}, the constant sign from \eqref{constant sign  H} and of \eqref{c circ}, whereas the second statement is based on the definition of $H$ and \eqref{H=L}. 
But $q_{\circ}$ is absolutely continuous with respect to Lebesgue measure and is supported in the whole $\mathds{R}^{m}$ (see Theorem \ref{thm regularity meas}), hence the results almost everywhere in $\mathds{R}^{m}$, and \vspace*{-1.5em}
\begin{center}
    $(c_{\circ},u_{\circ})$ solves \eqref{cell prob - NL - 1} where $\mathcal{X}$ is as in \eqref{eq: functional space X - NL}.\vspace*{-0.5em}
\end{center}

\textit{Step 3. (Uniqueness of $u_{\circ}(\cdot)$)} \\
To prove that $u_{\circ}(\cdot)$ is unique, we need to assume in addition that  $b$ is locally Lipschitz continuous with at most a linear growth, i.e. $\theta = 1$ and hence $\kappa = d$. This setting will allow us to apply the Liouville type result in \cite{bardi2016liouville}. \\
Suppose $(c_{\circ},u_{1}(\cdot)), (c_{\circ},u_{2}(\cdot))$ are two solutions such that $u_{1},u_{2}\in W^{r,2}_{\text{loc}}$ for $r>\frac{m}{2}$, and with a polynomial growth of order at most $d$. Then we have, using the inequality ``$\min(A-B)\leq \min(A) - \min(B)$"
\begin{equation*}
    \min\limits_{\alpha\in A}\{\,-\mathcal{L}_{\alpha}(u_{1}-u_{2})\,\} \leq \min\limits_{\alpha\in A}\{-\mathcal{L}_{\alpha}u_{1} + f(\cdot\,,\alpha)\} - \min\limits_{\alpha\in A}\{-\mathcal{L}_{\alpha}u_{2} + f(\cdot\,,\alpha)\}  = 0.
\end{equation*}
Note also that when $r>\frac{m}{2}$, $W^{r,2}_{\text{loc}}$ functions are continuous and pointwise twice differentiable almost everywhere (see the last point in Remark \ref{rmk:regularity hjb}). So $v\coloneqq u_{1}-u_{2}$ is a continuous viscosity sub-solution to $\min\limits_{\alpha\in A}\{\;-\mathcal{L}_{\alpha}v(x)\;\}=0$ in $\mathds{R}^{m}$.
Therefore uniqueness of a solution $(c_{\circ},u_{\circ}(\cdot))$ is reduced to proving that there cannot exist non-constant sub-solutions to the static HJB equation $\min\limits_{\alpha\in A}\{-\mathcal{L}_{\alpha}v \} = 0$, i.e. whether Liouville property holds for the latter static HJB. This is answered positively in \cite[Theorem 2.1]{bardi2016liouville} provided one can find a function $\psi\in C^{\infty}(\mathds{R}^m)$ and $R_{o}>0$ such that 
\begin{equation}
    \label{eq: claim proof liouv 1}
    \min\limits_{\alpha\in A}\{-\mathcal{L}_{\alpha}\psi(x)\} \geq 0 \quad \text{in }\, \overline{B(0,R_{o})}^{C},\quad \psi(x) \to +\infty\; \text{when }\, |x|\to +\infty
\end{equation}
and satisfying
\begin{equation}
\label{eq: claim proof liouv 2}
    \lim\limits_{|x|\to +\infty}\,\frac{v(x)}{\psi(x)} = 0
\end{equation}
To do so, we check that $\psi(x) \coloneqq |x|^d \log(|x|)$ satisfies the latter two conditions. Using the polynomial growth of $u_{1}$ and $u_{2}$, \eqref{eq: claim proof liouv 2} is immediate. To check the validity of \eqref{eq: claim proof liouv 1}, we compute $-\mathcal{L}_{\alpha}\psi(x)$ and make use of assumptions (A3, A4, A6). This is done in detail in \cite{kouhkouhPhD}. 
And therefore, $v=u_{1}-u_{2}\equiv \text{constant}$.
\end{proof}

\begin{remark}\label{rmk: critical value}
Recalling the definition of the corresponding dual problem \eqref{eq: dual - NL - proof}, one can see that the ergodic constant $c$ that is given by Theorem \ref{thm: main NL} is the largest one, in the sense that: if there exists another solution $(\widetilde{c}, \widetilde{u}(\cdot))$, then necessary $c\geq \widetilde{c}$. This is in line with the classical results on viscous ergodic Bellman equations for which one usually expects infinitely many possible ergodic constants (and solutions) but all smaller than the critical (largest) one; see \cite{ichihara2011recurrence, kaise2006structure}. Analogously, when the Hamiltonian is given by a $\max$ (instead of a $\min$), the ergodic constant $c$ that we obtain will be the smallest one.
\end{remark}

We conclude this section by mentioning an easy consequence of our main result and which is a continuity estimate on the (critical) ergodic constant. Such an estimate is important for applications to problems in singular perturbations and homogenization, and it is a refinement of \cite[Proposition 4.4]{barles2016unbounded}. Indeed, using the explicit definition of the ergodic constant $c$ in Theorem \ref{thm: main NL}, together with the estimate in Proposition \ref{prop: continuity F}, we can upper-bound $|c_{1}-c_{2}|$ where  $(c_{i},u_{i}(\cdot)),i=1,2,$ solve ergodic HJB equations 
\begin{equation*}
    \min\limits_{\alpha \in A_{i}} \{ -b_{i}(x,\alpha)\cdot \nabla u_{i}(x) - \text{trace}(a_{i}(x,\alpha)D^{2}u_{i}(x)) + f_{i}(x,\alpha) \} = c_{i},\quad \text{ in } \mathds{R}^{n}.
\end{equation*}

\section*{Acknowledgments}
I wish to thank Martino Bardi, J. Frédéric Bonnans, Radu Ioan Boţ, Alessandro Goffi and Boris Mordukhovich  for fruitful discussions on the content of this paper. I am also grateful to the reviewers for their valuable comments. 

\bibliographystyle{siam}
\bibliography{references}
\end{document}